\documentclass[12pt]{article}
\usepackage[utf8]{inputenc}
\usepackage[T1]{fontenc}
\usepackage[english]{babel} 
\usepackage{csquotes}
\usepackage{charter}
\usepackage{amsmath}
\usepackage{amssymb}
\usepackage{amsthm}
\usepackage[nottoc]{tocbibind}
\usepackage{graphicx}
\usepackage{stmaryrd}
\usepackage{accents}
\usepackage[top=2.5cm, bottom=2cm, left = 2.5cm, right= 2.5cm]{geometry}
\usepackage{amsfonts}
\usepackage{dsfont}
\usepackage{authblk}
\usepackage{xcolor}
\usepackage[linesnumbered,ruled,vlined]{algorithm2e}
\usepackage{ifthen}
\usepackage{hyperref}
\usepackage[all]{hypcap}
\usepackage{aliascnt}
\usepackage[block = none,style=authoryear,natbib=true,maxbibnames=9,maxcitenames=2,uniquelist=false,backend = biber]{biblatex}

\usepackage[linecolor=blue!60!,backgroundcolor=blue!10!,textwidth=2.5cm,textsize=scriptsize]{todonotes}

\hypersetup{pdftitle={Confidence envelopes for the false discoveries with heterogeneous data},            pdfauthor={Romain Périer, Gilles Blanchard, Sebastian Döhler, Guillermo Durand, Etienne Roquain},          unicode=true,           colorlinks=true,       linkcolor = purple,
  citecolor=blue,      }

\SetKwInput{KwInput}{Input}
\SetKwInput{KwOutput}{Output}
\SetKwInput{KwReturn}{Return}
\newcommand{\func}[5][]{
\ifthenelse{\equal {#1}{}}
{
\left\{\begin{array}{rcl}
 #2 & \longrightarrow & #3 \\
 #4 & \longmapsto & #5 \end{array}\right.
}
{#1:\left\{\begin{array}{rcl}
 #2 & \longrightarrow & #3 \\
 #4 & \longmapsto & #5 \end{array}\right.}
 }

\DeclareCiteCommand{\cite}
  {\usebibmacro{prenote}}
  {\usebibmacro{citeindex}\printtext[bibhyperref]{\usebibmacro{cite}}}
  {\multicitedelim}
  {\usebibmacro{postnote}}

\DeclareCiteCommand*{\cite}
  {\usebibmacro{prenote}}
  {\usebibmacro{citeindex}\printtext[bibhyperref]{\usebibmacro{citeyear}}}
  {\multicitedelim}
  {\usebibmacro{postnote}}

\DeclareCiteCommand{\parencite}[\mkbibparens]
  {\usebibmacro{prenote}}
  {\usebibmacro{citeindex}\printtext[bibhyperref]{\usebibmacro{cite}}}
  {\multicitedelim}
  {\usebibmacro{postnote}}

\DeclareCiteCommand*{\parencite}[\mkbibparens]
  {\usebibmacro{prenote}}
  {\usebibmacro{citeindex}\printtext[bibhyperref]{\usebibmacro{citeyear}}}
  {\multicitedelim}
  {\usebibmacro{postnote}}

\DeclareCiteCommand{\footcite}[\mkbibfootnote]
  {\usebibmacro{prenote}}
  {\usebibmacro{citeindex}\printtext[bibhyperref]{ \usebibmacro{cite}}}
  {\multicitedelim}
  {\usebibmacro{postnote}}

\DeclareCiteCommand{\footcitetext}[\mkbibfootnotetext]
  {\usebibmacro{prenote}}
  {\usebibmacro{citeindex}\printtext[bibhyperref]{\usebibmacro{cite}}}
  {\multicitedelim}
  {\usebibmacro{postnote}}

\DeclareCiteCommand{\textcite}
 {\boolfalse{cbx:parens}}
  {\usebibmacro{citeindex}\printtext[bibhyperref]{\printnames{labelname}\printtext{ (\usebibmacro{citeyear}\printtext{)}}}}
 {\ifbool{cbx:parens}
  {\bibcloseparen\global\boolfalse{cbx:parens}}
  {}\multicitedelim}
{\usebibmacro{textcite:postnote}}

\let\originalleft\left
\let\originalright\right
\renewcommand{\left}{\mathopen{}\mathclose\bgroup\originalleft}
\renewcommand{\right}{\aftergroup\egroup\originalright}

\RequirePackage{xstring}

\newcommand{\paren}[2][a]{\IfEqCase{#1}{{a}{\left(#2\right)}{0}{(#2)}{1}{\big(#2\big)}{2}{\Big(#2\Big)}{3}{\bigg(#2\bigg)}{4}{\Bigg(#2\Bigg)}}[\PackageError{paren}{Undefined option to paren: #1}{}]}
\newcommand{\norm}[2][a]{\IfEqCase{#1}{{a}{\left\lVert#2\right\rVert}{0}{\lVert#2\rVert}{1}{\big\lVert#2\big\rVert}{2}{\Big\lVert#2\Big\rVert}{3}{\bigg\lVert#2\bigg\rVert}{4}{\Bigg\lVert#2\Bigg\rVert}}[\PackageError{norm}{Undefined option to norm: #1}{}]}
\newcommand{\brac}[2][a]{\IfEqCase{#1}{{a}{\left[#2\right]}{0}{[#2]}{1}{\big[#2\big]}{2}{\Big[#2\Big]}{3}{\bigg[#2\bigg]}{4}{\Bigg[#2\Bigg]}}[\PackageError{brac}{Undefined option to brac: #1}{}]}
\newcommand{\inner}[2][a]{\IfEqCase{#1}{{a}{\left\langle#2\right\rangle}{0}{\langle#2\rangle}{1}{\big\langle#2\big\rangle}{2}{\Big\langle#2\Big\rangle}{3}{\bigg\langle#2\bigg\rangle}{4}{\Bigg\langle#2\Bigg\rangle}}[\PackageError{inner}{Undefined option to inner: #1}{}]}

\newcommand{\floor}[2][a]{\IfEqCase{#1}{{a}{\left\lfloor#2\right\rfloor}{0}{\lfloor#2\rfloor}{1}{\big\lfloor#2\big\rfloor}{2}{\Big\lfloor#2\Big\rfloor}{3}{\bigg\lfloor#2\bigg\rfloor}{4}{\Bigg\lfloor#2\Bigg\rfloor}}[\PackageError{floor}{Undefined option to floor: #1}{}]}

\newcommand{\ceil}[2][a]{\IfEqCase{#1}{{a}{\left\lceil#2\right\rceil}{0}{\lceil#2\rceil}{1}{\big\lceil#2\big\rceil}{2}{\Big\lceil#2\Big\rceil}{3}{\bigg\lceil#2\bigg\rceil}{4}{\Bigg\lceil#2\Bigg\rceil}}[\PackageError{ceil}{Undefined option to ceil: #1}{}]}

\newcommand{\abs}[2][a]{
\IfEqCase{#1}{{a}{\left\vert#2\right\rvert}{0}{\vert#2\rvert}{1}{\big\vert#2\big\rvert}{2}{\Big\vert#2\Big\rvert}{3}{\bigg\vert#2\bigg\rvert}{4}{\Bigg\vert#2\Bigg\rvert}}[\PackageError{abs}{Undefined option to abs: #1}{}]}

\newcommand{\set}[2][a]{
\IfEqCase{#1}{{a}{\left\{#2\right\}}{0}{\{#2\}}{1}{\big\{#2\big\}}{2}{\Big\{#2\Big\}}{3}{\bigg\{#2\bigg\}}{4}{\Bigg\{#2\Bigg\}}}[\PackageError{set}{Undefined option to set: #1}{}]}

\newcommand{\intr}[1]{\llbracket #1 \rrbracket}
\newcommand{\wh}[1]{{\widehat{#1}}}

 \newcommand{\Pro}[2][a]{\mathbb{P}\paren[#1]{#2}} 
\newcommand{\Proo}[3][a]{\mathbb{P}_{#3}\paren[#1]{#2}}
\newcommand{\Esp}[2][a]{\mathbb{E}\brac[#1]{#2}}

\newcommand{\ind}[1]{\mathds{1}_{#1}}
\newcommand{\indev}[2][a]{\mathds{1}{\set[#1]{#2}}}
\newcommand{\cH}{\mathcal{H}}
\newcommand{\cB}{\mathcal{B}}
\newcommand{\cK}{\mathcal{K}}
\newcommand{\cA}{\mathcal{A}}
\newcommand{\cP}{\mathcal{P}}
\newcommand{\cN}{\mathcal{N}}

\newcommand{\cU}{\mathcal{U}}

\newcommand{\cdf}{F}
\newcommand{\Nm}{\left\llbracket1,m\right\rrbracket}
\newcommand{\NM}{\left\llbracket0, m\right\rrbracket}
\newcommand{\NS}{\left\llbracket0,|S|\right\rrbracket}
\newcommand{\intrN}{\left\llbracket1,N\right\rrbracket}

\newcommand{\Nmz}{\left\llbracket1,m_0\right\rrbracket}
\newcommand{\Nk}{\left\llbracket1,k\right\rrbracket}
\newcommand{\NN}{\left\llbracket1,N\right\rrbracket}
\newcommand{\Rfam}{\mathfrak{R}}
\newcommand{\Afam}{\mathfrak{A}}

\newcommand{\nat}{\mathbb{N}}

\newcommand{\RR}{\mathbb R}
\newcommand{\statfam}{\mathfrak{F}}
\newcommand{\Vhat}{\widehat{V}}
\newcommand{\Dhat}{\widehat{D}}

\newcommand{\mzsc}{\hat{m}_0^{\SCo}}

\DeclareMathOperator*{\argmin}{arg\,min}
\DeclareMathOperator{\FDR}{FDR}
\DeclareMathOperator{\FDP}{FDP}
\DeclareMathOperator{\FDX}{FDX}
\DeclareMathOperator{\FWER}{FWER}
\DeclareMathOperator{\JER}{JER}
\DeclareMathOperator{\GHS}{GHS}
\DeclareMathOperator{\SCo}{SC1}
\DeclareMathOperator{\SCt}{SC2}
\DeclareMathOperator{\MBR}{MBR}
\DeclareMathOperator{\IP}{IP}

\def\NewTheorem#1{\newaliascnt{#1}{Theoremm}
  \newtheorem{#1}[#1]{#1}
  \aliascntresetthe{#1}
  \expandafter\def\csname #1autorefname\endcsname{#1}
}
\NewTheorem{Theorem}
\NewTheorem{Definition}
\NewTheorem{Proposition-Definition}
\NewTheorem{Lemma}
\NewTheorem{Example}
\NewTheorem{Corollary}
\NewTheorem{Proposition}
\NewTheorem{Remark}

\expandafter\def\csname Modelautorefname\endcsname{Model}
\newtheorem{Assumption}{Assumption}
\expandafter\def\csname Assumptionautorefname\endcsname{Assumption}

\expandafter\def\csname Procedureautorefname\endcsname{Procedure}

\expandafter\def\csname Notationautorefname\endcsname{Notation}
\expandafter\def\csname Subsectionautorefname\endcsname{Subsection}
\addto\extrasenglish{

}
\newcommand{\secref}[1]
{\hyperref[#1]{Section~\ref{#1}}}
\newcommand{\appref}[1]
{\hyperref[#1]{Appendix~\ref{#1}}}
\newcommand{\equaref}[1]
{\hyperref[#1]{Equation~\eqref{#1}}}
\newcommand{\algoref}[1]
{\hyperref[#1]{Algorithm~\ref{#1}}}

\newcommand{\defn}[2][]{
\ifthenelse{\equal {#1}{}}
    {\begin{Definition}
        #2
    \end{Definition}}
        {\begin{Definition}[#1]
        #2
    \end{Definition}}
}

\newcommand{\defnl}[3][]{
\ifthenelse{\equal {#1}{}}
    {\begin{Definition}\label{#3}
        #2
    \end{Definition}}
        {\begin{Definition}[#1]\label{#3}
        #2
    \end{Definition}}
}

\newcommand{\thm}[2][]{
\ifthenelse{\equal {#1}{}}
    {\begin{Theorem}
        #2
    \end{Theorem}}
        {\begin{Theorem}[#1]
        #2
    \end{Theorem}}
}
\newcommand{\thml}[3][]{
\ifthenelse{\equal {#1}{}}
    {\begin{Theorem}\label{#3}
        #2
    \end{Theorem}}
        {\begin{Theorem}[#1]\label{#3}
        #2
    \end{Theorem}}
}

\newcommand{\prop}[2][]{
\ifthenelse{\equal {#1}{}}
    {\begin{Proposition}
        #2
    \end{Proposition}}
        {\begin{Proposition}[#1]
        #2
    \end{Proposition}}
}

\newcommand{\propl}[3][]{
\ifthenelse{\equal {#1}{}}
    {\begin{Proposition}\label{#3}
        #2
    \end{Proposition}}
        {\begin{Proposition}[#1]\label{#3}
        #2
    \end{Proposition}}
}

\newcommand{\rem}[2][]{
\ifthenelse{\equal {#1}{}}
    {\begin{Remark}
        #2
    \end{Remark}}
        {\begin{Remark}[#1]
        #2
    \end{Remark}}
}

\newcommand{\reml}[3][]{
\ifthenelse{\equal {#1}{}}
    {\begin{Remark}\label{#3}
        #2
    \end{Remark}}
        {\begin{Remark}[#1]\label{#3}
        #2
    \end{Remark}}
}

\newcommand{\lem}[2][]{
\ifthenelse{\equal {#1}{}}
	{
	\begin{Lemma}
	#2
	\end{Lemma}
	}
	{
	\begin{Lemma}[#1]
	#2
	\end{Lemma}
	}
}

\newcommand{\leml}[3][]{
\ifthenelse{\equal {#1}{}}
	{
	\begin{Lemma}\label{#3}
	#2
	\end{Lemma}
	}
	{
	\begin{Lemma}[#1]\label{#3}
	#2
	\end{Lemma}
	}
}

\newcommand{\cor}[2][]{
\ifthenelse{\equal {#1}{}}
	{
	\begin{Corollary}
	#2
	\end{Corollary}
	}
	{
	\begin{Corollary}[#1]
	#2
	\end{Corollary}
	}
}

\newcommand{\corl}[3][]{
\ifthenelse{\equal {#1}{}}
	{
	\begin{Corollary}\label{#3}
	#2
	\end{Corollary}
	}
	{
	\begin{Corollary}[#1]\label{#3}
	#2
	\end{Corollary}
	}
}

\newcommand{\ex}[2][]{
\ifthenelse{\equal {#1}{}}
    {\begin{Example}
        #2
    \end{Example}}
        {\begin{Example}[#1]
        #2
    \end{Example}}
}
\newcommand{\exl}[3][]{
\ifthenelse{\equal {#1}{}}
    {\begin{Example}\label{#3}
        #2
    \end{Example}}
        {\begin{Example}[#1]\label{#3}
        #2
    \end{Example}}
}

\newcommand{\ass}[2][]{
\ifthenelse{\equal {#1}{}}
    {\begin{Assumption}
        #2
    \end{Assumption}}
        {\begin{Assumption}[#1]
        #2
    \end{Assumption}}
}
\newcommand{\assl}[3][]{
\ifthenelse{\equal {#1}{}}
    {\begin{Assumption}\label{#3}
        #2
    \end{Assumption}}
        {\begin{Assumption}[#1]\label{#3}
        #2
    \end{Assumption}}
}

\title{Confidence envelopes for the false discoveries with heterogeneous data}

\author[1,2]{Romain Périer}
\author[1,3]{Gilles Blanchard}
\author[4]{Sebastian Döhler}
\author[1,2]{Guillermo Durand}
\author[5]{Etienne Roquain}

\affil[1]{Laboratoire de Mathématiques d'Orsay (LMO), Université Paris-Saclay, CNRS}
\affil[2]{INRIA Celeste}
\affil[3]{INRIA DataShape}
\affil[4]{Hochschule Darmstadt, University of Applied Sciences}
\affil[5]{Laboratoire de Probabilités, Statistique et Modélisation (LPSM), Sorbonne Université, CNRS}

\date{\today}
 
\AtEveryBibitem{
\clearfield{doi}
  \clearfield{url}
  \clearfield{issn}
  \clearfield{month}
  \clearfield{editor}
  \clearfield{ISBN}
}

\begin{document}

\maketitle

\begin{abstract}

In the context of selective inference, confidence envelopes for the false discoveries allow the user to select any subset of null hypotheses while having a statistical guarantee on the number of false discoveries in the selected set. Many constructions of such envelopes have been proposed recently, using local test families \citep{Genovese2006,goeman2011multiple}, paths \citep{Katsevichramdas2020} or interpolation \citep{blanchard2020post}. All those methods have in common that they have been well-studied for the homogeneous case where all $p$-values under the null have a uniform distribution over $[0,1]$. However, in many applications the data are heterogeneous and discrete, hence
the $p$-values have heterogeneous, discrete distributions, and the previous constructions may  incur a loss of power, in the sense that they over-estimate the number of false discoveries.
In this paper, we  bridge the previous constructions under the homogeneous case with new tools. We also apply these tools to
propose
several 
confidence envelopes based on tools tailored for heterogeneous data, like the Bretagnolle inequality, or a new variant of the Simes inequality.
We compare these new envelopes to their homogeneous counterparts on simulated
data.
\end{abstract}

\tableofcontents

\vspace{1cm}

\section{Introduction}

\subsection{Overview of goals and motivations}

We describe the aims of the paper, approach and contributions.
The objects used will be more formally defined in \secref{sec_notation}
but should be familiar to a reader already acquainted with the 
multiple testing literature.

Let $(H_i)_{i \in \Nm}$ be a family of null statistical hypotheses to test; each null hypothesis corresponds to a subset
of probability distributions driving the observed data. 
For a data distribution $\mu$, we say that ``$\mu$ satisfies the
null hypothesis $H_i$'' if $\mu \in H_i$, and we denote $\cH_0(\mu)$, immediately
abbreviated to $\cH_0$, the set of true nulls, i.e., 
indices $i$ of null hypotheses
$H_i$ satisfied by $\mu$. Based on the data, the user outputs a subset of selected
hypotheses $\wh{S}\subseteq \Nm$. A common goal in multiple testing
is to return an upper bound $\Vhat(\wh{S})$ on the number of false discoveries (FD)
$\abs[0]{\wh{S} \cap \cH_0}$. In the present paper, we focus
on bounds holding with high probability, whatever the true
$\mu$ (belonging to some overarching statistical model $\statfam$).

Observe that $\wh{S}$ is a random set whose dependence on the data can be complex and not fully known, especially if the user departs from
a precisely prescribed algorithm to engage in various forms of data snooping. For this reason, an approach considered as more statistically sound than assuming a specific form for $\wh{S}$
is to establish a family of upper bounds $(\Vhat(S))_{S \subseteq \Nm}$
holding (with guaranteed large probability) simultaneously for all potential selection sets $S \subseteq \Nm$.
Such a family of bounds will be called a {\em FD confidence envelope} in
the rest of the present paper.
(It is also sometimes called a \textit{post hoc} multiple testing bound since it entails
a guarantee in particular for any data-dependent $\wh{S}$.)

Several approaches for constructing FD confidence envelopes have been
proposed in the literature. A particularly important device
to do so, also playing a central role in the present work, is the \textit{inversion procedure} introduced by~\citet{Genovese2006}. For any subset $A \subseteq \Nm$, we denote $H_A$ the set of distributions simultaneously satisfying $H_i, i \in A$; also called intersection hypothesis, since $H_A = \bigcap_{i\in A} H_i$. Assume that for any subset $A$, there
exists a {\em local test} $\varphi_A \in \set{0,1}$ (the value 1 indicating rejection) such that if $\mu$ satisfies $H_A$, then
the probability that the local test falsely rejects (local type I error)
is bounded by some prescribed constant $\delta$.
Then, $V_\varphi(S) = \max_{A: \varphi_A = 0} \abs{A \cap S}$ is a FD confidence envelope with probability $1-\delta$. (An equivalent bound based on the notion of
closed testing associated to the family of local tests was proposed by~\citealp{goeman2011multiple}.)

This result shifts the problem to two questions: how to construct appropriate
local tests and, for given local tests and rejection set $S$, how to compute efficiently the associated inversion bound $V_\varphi(S)$.
Note that computing $\varphi_A$ for all subsets $A\subseteq \Nm$ is exponential in $m$ and typically infeasible. Sometimes
one may be contented with an efficiently computable upper bound on $V_\varphi(S)$, also called \textit{shortcut}.

As is common, we assume that the data has already been processed in such a way that for each hypothesis
$H_i$, an observable $p$-value $p_i$ has been constructed. Recall
that the fundamental property of a $p$-value associated
to a null hypothesis $H$ is that, for any data distribution
satisfying $H$, the random variable $p$ has a
{\em super-uniform} distribution (i.e., stochastically larger
or equal to a uniform random variable on $[0,1]$). 
If we furthermore assume that $p$-values $p_i$ for 
$i \in \cH_0$ are
independent (also a common assumption), it is possible to construct
a local test $\varphi_A$ derived from an envelope on the empirical cumulative distribution function (ecdf) of
i.i.d. uniform random variables. 
Namely, assume that $\cB_n(t)$ is a $(1-\delta)$-upper confidence envelope on the ecdf $\wh{F}_n(t) = \frac{1}{n} \sum_{i} \indev{U_i \leq t}$, if 
$(U_i)_{i \in \intr{n}}$ are i.i.d. uniform $[0,1]$, that is, it holds $\wh{F}_n(t) \leq \cB_n(t)$ for all $t \in [0,1]$, with probability $(1-\delta)$.
Then the test $\varphi_A$ which rejects if 
$\wh{F}_A(t) = \frac{1}{|A|} \sum_{i \in A} \indev{p_i \leq t} > \cB_{|A|}(t)$ for any $t$ 
is a local
test for $H_A$. This principle of (i.i.d.) uniformity testing
based on detecting violations of confidence envelopes on the ecdf is long-standing in statistics, from the Kolmogorov-Smirnov test (having asymptotic validity)
 to the more recent higher criticism (HC) tests
in \citet{Genovese2006}. Various
developments on this idea, based on more refined
envelopes, can be found in \citet{MBR2024}.

The approach described above, constructing local tests from 
upper envelopes for
ecdf of uniform random variables, also applies to super-uniform null $p$-values because of a simple coupling device described in \secref{sec_homo_concentration}. 

However, in many situations, such as when $p$-values
are discrete, the exact cdf $F_i$ of $p$-value $p_i$
under the null is exactly known; in such a scenario,
using ``default'' confidence envelopes holding for uniform variables may result in a loss of power of the associated local test. This is the starting motivation of this work: we first concentrate on deriving
sharp confidence envelopes for ecdf of 
such variables (independent, but with different
known distributions $F_i$, a setting we call {\em heterogeneous}), and then turn to
constructing FD bounds that can
be derived from them. A point of particular attention
is to address the need for computable bounds (or shortcuts thereof), an issue which becomes thornier in the heterogeneous
case. Namely, while homogeneous local ecdf confidence envelopes $\cB_{\abs{A}}(t)$ only depend on the cardinality $|A|$
of the considered subset of hypotheses, their heterogeneous
counterpart typically depend on the whole set $A$. 
Driven by this need of computable bounds, we will also explore the alternative approach of \citet{blanchard2020post} to construct FD bounds based on the control of a Joint Error Rate (JER) on a specific reference family of subsets of hypotheses. This approach has, until now, come in 3 different flavors
depending on the reference family, namely: $k$-FWER, top-$k$, and with deterministic regions. They will all be re-introduced in \secref{sec_homo_JER} and used in our contributions later.

\subsection{Confidence envelopes in the context of selective inference}

As mentioned before, data snooping (also called $p$-hacking, or significance chasing) plays a critical role in modern day applications and is considered one of the main causes of the replication crisis (see \citealp[``Principle 4'']{MR3511040}, and \citealp{MR2216666}). Indeed, classical statistical guarantees generally do not hold when the dataset has undergone a first selection step based on the observation of the data itself, but the classical method that has been used (whichever it is) does not take into account this random selection step. For instance, think of the Benjamini-Hochberg (BH) procedure applied to $10^6$ tests, but where only the $10^3$ smallest $p$-values have been selected by the data analyst as input to the procedure. Contrary to the plain BH procedure, there is no guarantee on the False Discovery Rate control of such a two-step procedure. While the flaw is obvious in this example, various, less blatant flavors of data-dependent selection often make their way into statistical practice
while seeming innocuous to the practitioner, for example selection based on a different statistic which is not completely independent of the test statistic.

Selective inference is the branch of statistics that aims to answer this concern by providing methods and procedures with proven statistical guarantees when a first selection step occurred. Selective inference is generally tackled in two possible fashions. One consists in assuming that the selection procedure is known (for example, coming from the LASSO estimator) and to design valid inference by conditioning on the selection event, see for example \citet{doi:10.1073/pnas.1507583112,lee2016exact,fithian2017optimalinferencemodelselection}.

The other fashion adopts a ``simultaneous'' point of view, where we seek to achieve guarantees for all possible selection subsets. This has been investigated for inference in, for instance, \citet{scheffe1953, berk2013, bachoc2019}. For multiple testing purposes, this fashion takes the form of the construction of the aforementioned confidence envelopes $\paren[1]{\Vhat(S)}_{S \subseteq \Nm}$, first introduced by \citet{Genovese2006}. Since then, many works have focused on such constructions, with, notably, a prolific series of work focused on closed testing \citep{goeman2011multiple, MR3305943, MR3992394, goeman2021only, MR4731977}, and another series based on the JER approach \citep{blanchard2020post, durand2020post, 10.1093/bioinformatics/btac693, blain22notip, NEURIPS2023_f6712d51, MBR2024}. Note that the inversion procedure and the closed testing approach, both based on local tests, are recalled in \secref{sec_homo_inversion}, while the JER approach is recalled in \secref{sec_homo_JER}.

Confidence envelopes have notably been applied to gene expression analysis \citep{goeman2011multiple, hemerik2018, 10.1093/bioinformatics/btac693} while software coded with the \texttt{R} language \citep{R-base} is already available for such analyses \citep{IIDEA} in a user-friendly \texttt{shiny} application \citep{shiny}. Confidence envelopes have also been applied a lot in functional Magnetic Resonance Imaging (fMRI) studies \citep{blain22notip, NEURIPS2023_f6712d51, MR4731977}.

\subsection{Contributions and paper organization}

In \secref{sec_notation}, we give the formalism, notation and assumptions used in the remainder of the paper.

In \secref{sec_consolidation}, we revisit past works on confidence bounds construction while consolidating this theory with several contributions, including:
\begin{itemize}
    \item refined versions of known concentration inequalities,
\item extension of past works in the top-$k$ setting to super-uniformity (see Assumption \ref{ass_super_unif}),
    \item a new dynamic programming algorithm for confidence bound computation in the top-$k$ setting.
\end{itemize}

In \secref{sec_general_shortcuts}, we first provide new generic shortcuts for the inversion procedure, which are relevant
in particular when local tests detect violations of
uniform confidence envelopes.
These shortcuts are applied
in the homogeneous case to construct (over-)estimates of 
$m_0$. Such estimates can be plugged into
the JER methods, leading to so-called adaptive versions,
often leading to more powerful procedures that adapt to
to the true (but unknown) number of null hypotheses. 
Finally, we establish a novel formal equivalence 
in certain settings between 
the inversion procedure bound and this adaptive JER approach.

In \secref{sec_hetero_concentration}, we provide two 
ecdf confidence envelopes
valid under heterogeneity. The first one is a slight refinement of a known one, the second one is completely new. 
Using those as local tests, we apply to each of them the generic shortcuts from \secref{sec_general_shortcuts}; this allows us in turn to get estimates of $m_0$ under heterogeneity.

In \secref{sec_hetero_JER}, we demonstrate how to use the two previous
ecdf confidence envelopes
from \secref{sec_hetero_concentration} to construct new confidence bounds that use the JER formalism and that take the heterogeneity into consideration. In particular, we give constructions for the three JER settings: $k$-FWER, top-$k$, and deterministic regions. 
We also develop adaptive versions of our basic confidence bounds,
using the estimators of $m_0$ constructed in Section~\ref{sec_hetero_concentration}.

In \secref{sec_num}, we provide numerical experiments that demonstrate the power gain of our new constructions, with respect to their analogous past methods that do not take the heterogeneity into account. We do so with 
numerical simulations.

In \secref{sec_ccl}, we give some concluding remarks and perspectives.

Some complementary results are given in the appendix. In \autoref{appendix_VZ} we give a (previously known) third concentration inequality valid under heterogeneity  and the related confidence bounds, but they behave rather poorly in practice so they were excluded from the main body. In \autoref{appendix_gaps} we discuss and close some technical gaps in previous literature. In \autoref{appendix_ties}, we give complements about the top-$k$ setting, including links with another JER approach. In \autoref{appendix_random_pvalues}, we give a randomization technique to construct uniform random variables with coupling properties, which covers the classical ``randomized $p$-value'' construction. Finally, in \autoref{sec:KRcomplements}, we discuss slight refinements about past confidence envelopes in the top-$k$ setting, and \appref{sec_proof} is dedicated to the proofs.

\section{Multiple testing and confidence bound formalism}\label{sec_notation}

\subsection{Mathematical notation}

In the remainder, $m\geq 1$ and $N\geq 1$ are two positive integers. Specifically, $m$ will be used to denote the number of hypotheses tested, while $N$ is used when we need some generic integer. For any two $a,b\in\mathbb{N}$ with $a\leq b$, $\llbracket a, b\rrbracket$ is the integer interval $\set[a]{a, a+1,\dotsc,b-1, b}$.

The binary operator $\wedge$ denotes the minimum of two terms, and $\vee$ denotes the maximum of two terms.

For a vector $(x_1,\dotsc,x_N)\in\RR^N$, the notation $x_{(1)},\dotsc,x_{(N)}$ denotes the order statistics of $(x_1,\dotsc,x_N)$, such that $x_{(1)}\leq \dotsb \leq x_{(N)}$, and the notation $x_{[1]},\dotsc,x_{[N]}$ denotes the reverse-order statistics of $(x_1,\dotsc,x_N)$, such that $x_{[1]}\geq \dotsb \geq x_{[N]}$. In case of ties, any permutation that orders can be used. Finally, for $A\subseteq\llbracket 1, N\rrbracket$, the notation $x_{(1:A)},\dotsc,x_{(|A|:A)}$ denotes the order statistics of the elements $(x_i)_{i\in A}$ and, likewise, the notation $x_{[1:A]},\dotsc,x_{[|A|:A]}$ denotes the reverse-order statistics of the elements $(x_i)_{i\in A}$.

\subsection{Classical multiple testing formalism}

We let $\mathfrak{X}=(\mathcal{X},\Xi)$ a measurable space and $\statfam$ a model on $\mathfrak{X}$, that is a set of probability measures on $\mathfrak{X}$. We observe data belonging to $\mathcal{X}$ distributed according to a distribution that belongs to $\statfam$. Formally, this data is represented by a random variable often denoted $X$, defined on an abstract probability space $(\Omega, \Sigma, \mathbb P)$, such that the pushforward measure $X_{\sharp\mathbb P}$ is an element of $\statfam$. For a given $\mu\in\statfam$, we say that ``the law of $X$ is $\mu$'' or that ``$X$ is distributed according to $\mu$'', and we denote $X\sim\mu$, if $X_{\sharp\mathbb P}=\mu$.

For a given $\mu\in\statfam$, a statistic $T: \mathfrak{X}\to \paren[1]{\RR^N, \mathcal B\paren[0]{\RR^N}}$, and a Borelian set $A\in \mathcal B\paren{\RR^N}$, the notations $\Proo{T\in A}{\mu}$ and $\Proo{T(X)\in A}{\mu}$ are short for ``$\Pro[1]{T(X)\in A}$ for some $X$ with $X\sim\mu$''. The notation $\mathbb{E}_{\mu}$ is used in the same way for expectations.

We consider $m\geq1$ null hypotheses $H_{1}, \dotsc, H_{m}$ to test, which formally are submodels of $\statfam$, that is, $H_i\subseteq \statfam$ for all $i\in\Nm$. We denote by $\cH_0=\cH_0(\mu)$ (the dependence in $\mu$ will be dropped when there is no ambiguity) the set of indices of null hypotheses that are true, that is $\cH_0(\mu)=\set{i\in\Nm : \mu\in H_i}$. In other words, $H_i$ is true if and only if $i\in\cH_0$. The cardinal of $\cH_0$ is always denoted $m_0=m_0(\mu)=\abs{\cH_0(\mu)}$. We denote the complementary of $\cH_0$ in $\Nm$ by $\cH_1=\cH_1(\mu)$. Thus $\cH_1$ indexes the true signal that we would be able to ideally detect.

To test each $H_i, i\in\Nm$, we assume that have at hand a statistic which takes its values in $[0,1]$, named a $p$-value, and denoted $p_i$. When there is no ambiguity, we will identify the statistic $p_i$ and the random variable $p_i(X)$. We make the following two assumptions on the $p$-values. The first one is the classical requirement for a statistic to be considered a $p$-value (and induce a test with type-I error control). The second one is an independence assumption required for the application of our concentration inequalities.

\assl[Super-uniformity under the null]{For each $\mu\in\statfam$, for each $i\in\cH_0(\mu)$, for $X\sim\mu$, the distribution of $p_i(X)$ is super-uniform, which is often denoted by $p_i(X)\succeq\cU\paren[1]{[0,1]}$. This means that the cdf of $p_i(X)$ is upper bounded by the cdf of $\cU\paren[1]{[0,1]}$. Mathematically,
\begin{equation}
    \forall \mu\in\statfam, \forall i\in\cH_0(\mu), \forall x\in\RR,\; \Proo{p_i\leq x}{\mu} \leq 0 \wedge (x \vee 1).
    \label{eq_super_unif}
\end{equation}
}{ass_super_unif}

\assl[Independence under the null]{
For each $\mu\in\statfam$, the $p$ values $p_i(X)$, $i\in\cH_0(\mu)$ are mutually independent.
}{ass_indep}

\rem{
\autoref{ass_indep} is rather classical in multiple testing, albeit not totally realistic. In the literature, it is sometimes required that the family of $p$-values $\paren[1]{p_i(X)}_{i\in\cH_0}$ is also independent from the family $\paren{p_i(X)}_{i\in\cH_1}$, for example to get FDR bounds in \citet{dohler2018new}. In this paper, however, classical multiple testing results and inequalities are only ever applied to $\paren[1]{p_i(X)}_{i\in\cH_0}$, making this additional requirement unnecessary.
}

The following is a simple toy example making use of the introduced notation.
\exl[Gaussian one-sided under independence]
{In this example we assume that $X=(X_1,\dotsc,X_m)$ is a vector of independent Gaussians and the null hypotheses refer to the nullity of their means in contrast to their positivity (we assume that they share the same variance for simplicity). That is, formally, $(\mathcal{X},\Xi)=\paren[1]{\mathbb R^m, \mathcal B\paren{\mathbb R^m  }}$, $\statfam=\set[1]{ \mathcal N(\boldsymbol{\nu}, \sigma^2 \mathrm{Id}) : \forall j \in\Nm, \nu_j\geq 0, \sigma >0  }$, and for each $i\in\Nm$, $H_i= \set[1]{ \mathcal N(\boldsymbol{\nu}, \sigma^2 \mathrm{Id}) \in \statfam :\nu_i=0 }$. Then we can construct $p$-values by letting $p_i(X)=1-\Phi(X_i)$, where $\Phi$ denotes the cdf of $\mathcal N(0,1)$. Such $p$-values satisfy Assumptions \ref{ass_super_unif} and \ref{ass_indep}. They are even exactly uniform under the null.
}{ex_gauss_one_sided}

For every subset of hypotheses $S\subseteq\Nm$, let $V(S)=|S\cap\cH_0|$. If we think of $S$ as a selection set of hypotheses deemed significant, $V(S)$ is then the number of false discoveries in $S$. The function $V(\cdot)$ is the main object of interest in multiple testing, with past works proposing rejection procedures $R=R(X)$ with a given statistical control on $V(R)$. Popular error criteria include the Family-Wise Error Rate (FWER), defined as 
\[
\FWER(R)=\Pro{V(R)>0},
\]
the False Discovery Rate (FDR, \cite{benjamini1995controlling}), defined as the mean of the False Discovery Proportion (FDP):
\[
\FDP(R)=\frac{V(R)}{\abs{R}\vee 1}, \;\; \FDR(R) = \Esp{\FDP(R)},
\]
and the False Discovery Exceedance (FDX, \cite{Genovese2006}) for some level $\gamma\in(0,1)$:
\[
\FDX(R) = \Pro[1]{\FDP(R)>\gamma}.
\]

However, in this work and in the confidence bounds literature, one does not want to prescribe a single rejection set $R$ with proven control on $V(R)$ (and only $V(R)$), but to provide control on the function $V(\cdot)$ as a whole.

\subsection{Confidence bounds}\label{subsec_confidence_envelope}

With the formalism introduced in last Section, a simultaneous confidence upper bound is a functional 
\[
\Vhat :\left\{\begin{array}{rcl}
 \mathcal X\times (0,1) &  \longrightarrow &  \paren[2]{\cP\paren[1]{\Nm} \to \llbracket0,m\rrbracket} \\
  (x,\alpha)&\longmapsto   &  \Vhat_{(x,\alpha)}(\cdot)  \\
\end{array},\right.
\]
such that the following inequality is true:
\begin{equation}
\forall \mu\in\statfam, \forall \alpha \in (0,1),\; \Proo{\forall S \subseteq \Nm, V(S)\leq \Vhat_{(X,\alpha)}(S)}{\mu}\geq 1-\alpha.
\label{eq_confidence_formal}
\end{equation}
In the remainder or this work, the dependence in $(X,\alpha)$ will be dropped when there is no ambiguity and $\Vhat_{(X,\alpha)}$ will simply be written $\Vhat$.

Equation \eqref{eq_confidence_formal} means that, with high probability, we are able to bound the number of false discoveries in any selection set $S$. Note that the ``$\forall S \subseteq \Nm$'' being inside the probability is what makes the confidence bound ``simultaneous''.

From a simultaneous confidence upper bound on the false discoveries, we immediately get a simultaneous confidence lower bound on the trues discoveries, by letting
\begin{equation}
    \Dhat(S)=\abs{S}-\Vhat(S).
    \label{eq_confidence_formal_TP}
\end{equation}
Sometimes, confidence bounds are reported into this form, as in \citet{goeman2011multiple,goeman2021only}, or in \secref{sec_num} here when we report the numerical results of our experiments.

Also note that, by a simple division by $\abs{S}\vee 1$, \equaref{eq_confidence_formal} can be rewritten as an equation giving a simultaneaous upper bound on the FDP:
\begin{equation}
\forall \mu\in\statfam, \forall \alpha \in (0,1),\; \Proo{\forall S \subseteq \Nm, \FDP(S)\leq \frac{\Vhat_{(X,\alpha)}(S)}{\abs{S}\vee 1}}{\mu}\geq 1-\alpha.
\label{eq_confidence_formal_FDP}
\end{equation}

This, in turn, allows to use simultaneous upper bounds to construct procedures that control the FDX. Note that controlling the FDX is generally more desirable than controlling the FDR, given the nature of the latter as an expected value. See for example \citet[Figure 4]{MR3418717} for a credible example where the FDR is controlled but the FDP has a highly undesirable behavior (either 0 because no discoveries are made, or higher than the target level). The construction for FDX control is the following: one can compute the largest $S$ such that $\frac{\Vhat(S)}{\abs{S}\vee 1}$ is less than or equal to a nominal level $\gamma\in(0,1)$, and \eqref{eq_confidence_formal_FDP} ensures that, with high probability, the $\FDP$ of $S$ is upper bounded by $\gamma$, so that the $\FDX$ is bounded by $\alpha$.

\subsection{Heterogeneity}

In this work, we are interested in the case where the distributions of $p$-values under the null are heterogeneous, departing from the classical assumption that they are all distributed as $\cU([0,1])$ under the null (as in \cite{MBR2024}, for instance), that we can call the \textit{homogeneous uniform} case.

Also note that, with respect to the super-uniformity of Assumption \ref{ass_super_unif}, the homogeneous uniform case appears as a ``worst case'', in that many  multiple testing procedures, designed to provide statistical control of the type-I error for that case, become more conservative under super-uniformity. This is easy to see with classical rejection procedures that often take the form or $p$-value sublevel sets $R(X)=\set{i\in\Nm: p_i\leq \hat t(X)}$. Given that super-uniform $p$-values are stochastically larger than uniforms, the threshold $\hat t(X)$, although ensuring statistical validity, becomes badly calibrated and ``too small'', leading to fewer rejections.

In order to be able to leverage heterogeneity and super-uniformity and create confidence bounds that are more powerful than those designed to guarantee control in the homogeneous uniform case, we need some knowledge about said heterogeneity, that is, to have access to the distribution of any $p_i$ under the null, or at least to have a stochastic lower bound 
on that distribution. This motivates the following assumption:
\assl[Stochastic heterogeneous lower bound]{
For each $i\in\Nm$, there exists a known nondecreasing function $\cdf_i$ such that, if $\mu\in H_i$, the cdf of $p_i$ is upper bounded (as a function) by $\cdf_i$. Mathematically,
\begin{equation*}
    \forall \mu\in\statfam, \forall i\in\cH_0(\mu), \forall x\in\RR,\; \Pro[1]{p_i(X)\leq x}\leq \cdf_i(x).
    \label{eq_hetero}
\end{equation*}
}{ass_hetero}
If \autoref{ass_super_unif} holds, then 
\autoref{ass_hetero} is always satisfied by 
$F_i(x)=x\mapsto 0\vee (x\wedge 1)$,
but with
\autoref{ass_hetero} 
we hope to take advantage of 
tighter stochastic lower bounds if they are known, and we allow those to be heterogeneous. 
\autoref{ass_hetero} also allows to generalize beyond the super-uniform case
(i.e. if \autoref{ass_super_unif} does not hold), as is the case with weighted $p$-values (see \citealp{dohler2018new}). 

\autoref{ass_hetero} is satisfied in particular if, for all $\mu\in\statfam$, $i\in \cH_0(\mu)$, the cdf of $p_i$ does not depend on $\mu$ and is known. This is typically the case with discrete tests, where heterogeneity arises naturally, as we can see in \autoref{ex_hetero} below. More generally, \autoref{ass_hetero} is also always satisfied by $x\mapsto \sup_{\mu\in H_i}\Proo{p_i\leq x}{\mu}$. The latter choice of $F_i$ is the one made in \citet{dohler2018new}.

\defnl[Support]{For $i\in\Nm$, we define the support of $F_i$, by analogy with the support of a measure, by
\begin{align*}
    \mathcal A_i=\set[a]{x\in\RR : \forall\varepsilon>0,\; F_i(x+\varepsilon)-F_i(x-\varepsilon)>0}.
\end{align*}
}{def_support}

\exl[Independent binomials]{In this example we assume that $X=(X_1,\dotsc,X_m)$ is a vector of independent binomials of respective numbers of trials $n_1,\dotsc,n_m$ (known) and of unknown probabilities of success $\pi_1,\dotsc,\pi_m$. We test $H_i$: ``$\pi_i=\frac12$'' against ``$\pi_i\neq\frac12$''. That is, formally, $(\mathcal{X},\Xi)=\paren[1]{\mathbb R^m, \mathcal B\paren{\mathbb R^m  }}$, $\statfam=\set[1]{ \bigotimes_{j=1}^m\mathcal B\paren[a]{n_j,\pi_j} : \forall j \in\Nm, \pi_j\in[0,1]  }$, and for each $i\in\Nm$, $H_i= \set[1]{ \bigotimes_{j=1}^m\mathcal B\paren[a]{n_j,\pi_j}  \in \statfam :\pi_i=\frac12 }$. Then we can construct two-sided $p$-values by letting $p_i(\cdot):x\mapsto\Pro[1]{f_i(N_i)\leq f_i(x_i)}$, where: $N_i\sim\mathcal B\paren[a]{n_i,\frac12}$ and $f_i$ denotes the probability mass function of $\mathcal B\paren[a]{n_i,\frac12}$. Such $p$-values satisfy Assumptions \ref{ass_super_unif}, \ref{ass_indep}, and \ref{ass_hetero}, and their distributions under the null are heterogeneous as soon as not all $n_i$ are equal. In particular, we know the exact cdf of $p_i$ under the null and we can let $F_i$ be this cdf. Indeed, notice that $p_i(x)=\sum_{k=0}^{n_i}f_i(k)\ind{f_i(k)\leq f_i(x_i)}$ so, under the null, the support of $p_i$ is $\cA_i=\set[a]{\sum_{k=0}^{n_i}f_i(k)\ind{f_i(k)\leq f_i(K)}:K\in\llbracket0,n_i\rrbracket}$, and its cdf is the step function which has only discontinuities at the elements of $\cA_i$ and which is equal to the identity on $\cA_i$. For $n_i\in\set{5,15,30}$, the graphs of $F_i$, and the graph of the cdf of $\cU\paren[1]{[0,1]}$, are represented in Figure \ref{fig_discrete_cdf}.
}{ex_hetero}

\begin{figure}[ht]
\begin{center}
\includegraphics[scale=0.5]{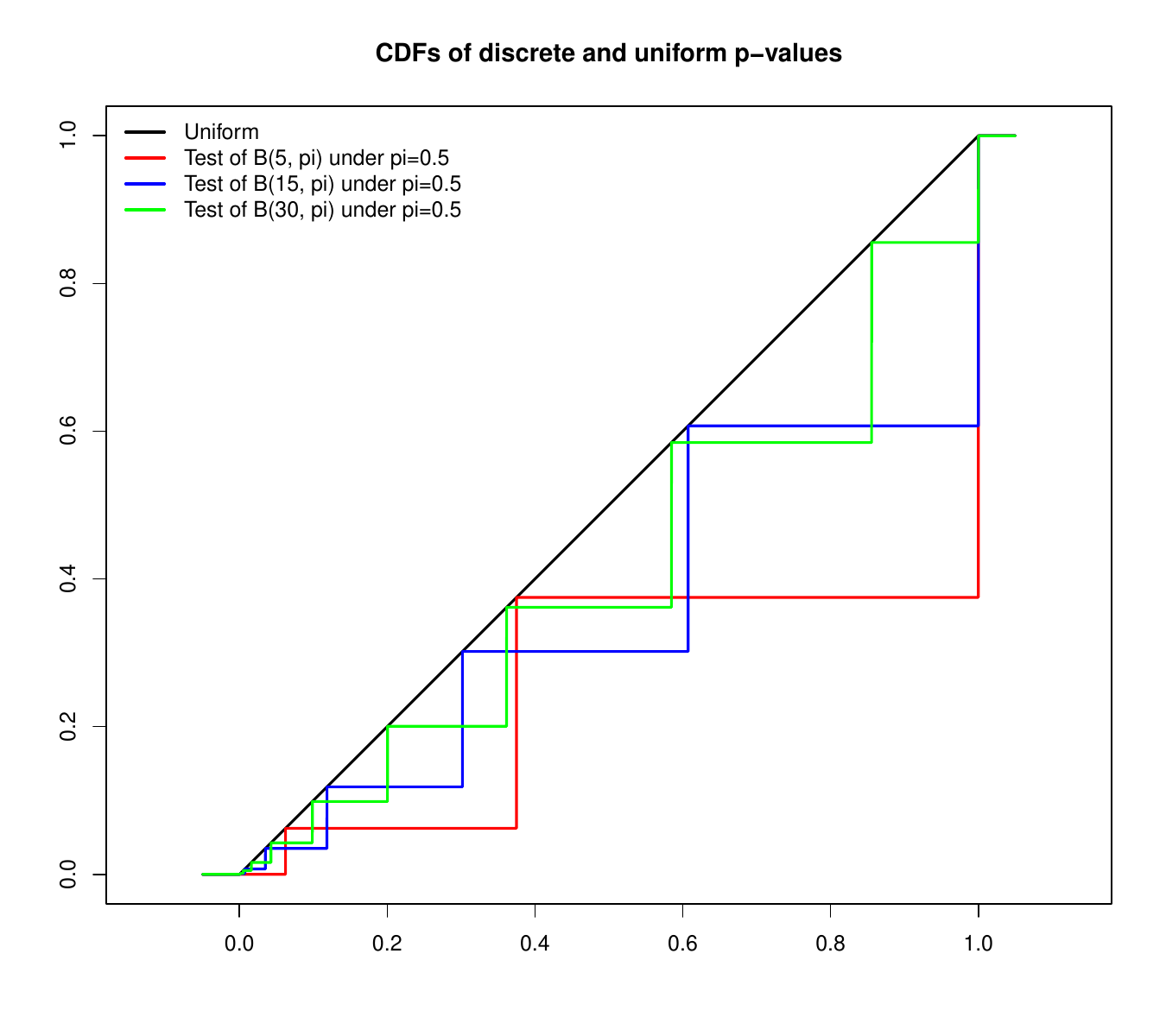}
\end{center}
\caption{Graphs of the cdf of $\cU([0,1])$ (black) and the cdf of the $p$-value under the null testing if a binomial with 5, 15 or 30 trials is of parameter $\frac12$ or not (other colors).}
\label{fig_discrete_cdf}
\end{figure}

Heterogeneity has already been investigated as a way to correct the conservativeness induced by discrete tests, for the FWER control \citep{tarone1990modified}, for the FDR control \citep{MR2134603, heller2014falsediscoveryratecontrolling, HABIGER20151, dohler2018new} and for the FDX control \citep{dohler2020controlling}. To our knowledge, this manuscript is the first work tackling the exploitation of heterogeneity in the context of confidence envelopes.

\section{Review and extension of existing theory}\label{sec_consolidation}

In this section, we revisit past works on confidence envelopes and tools used to build them, while introducing some new contributions to consolidate those works. Namely, in \secref{sec_homo_concentration}, we refine some known concentration inequalities and extend them to the super-uniform case thanks to a coupling device. 

Then, in \secref{sec_homo_inversion}, we recall the inversion method of \citet{Genovese2006} (which is equivalent to the closed testing approach of \citealp{goeman2011multiple}, as proven by \citealp[Appendix S-1.3]{blanchard2018supplement}), and a practical way to compute the derived confidence envelope in favorable cases.

Finally, in \secref{sec_homo_JER}, we recall the JER approach of \citet{blanchard2020post} and its three usual settings: $k$-FWER, top-$k$, and with deterministic regions. As new contributions, we explain how past works on the top-$k$ setting are valid under \autoref{ass_super_unif} with  adequate modifications and how some bounds can be computed efficiently in the top-$k$ setting with a new algorithm. 

Other contributions are deferred to Appendices \ref{appendix_gaps} and \ref{appendix_ties}: the bridging of the theoretical gaps in some past proofs, the irrelevance of the slight difference between our definition of the top-$k$ setting and the one of \citet{MBR2024}, and the links between the $k$-FWER approach and the top-$k$ approach.

\subsection{Homogeneous concentration inequalities}\label{sec_homo_concentration}

We start with recalling classical concentration inequalities that have been used to construct confidence bounds. While they are usually stated to be true for uniform random variables (as they are in \cite{MBR2024}), which imply that they are suitable to be used in the homogeneous uniform case (hence the name of this section), here we generalize them to super-uniform variables, which implies that they are also suitable under Assumption \ref{ass_super_unif}.

The underlying argument follows from coupling between super-uniform and uniform variables, we isolate this
(standard) device first in the following result:
\leml[Super-uniform/uniform coupling]{
Let $(U_i)_{i \in \intrN}$ be $N$ independent random variables that are super-uniform. Then there exist $\paren[1]{\widetilde{U}_i}_{i \in \intrN}$ independent random variables that are $\cU[0,1]$ (possibly defined on another probability space)
and $(V_i)_{i \in \intrN}$ random variables defined on the same space as $\paren[1]{\widetilde{U}_i}_{i \in \intrN}$, such that $(V_i)_{i \in \intrN}$  has the same distribution
as $(U_i)_{i \in \intrN}$ and $V_i \geq \widetilde{U_i}$ for all $i \in \intrN$.
}{lem_unifcoupling}

The proof of this device is given in \appref{proof_lem_unifcoupling}.

The first inequality we depict here is the well-known one-sided DKW inequality \citep{dvoretzky1956asymptotic}, with the optimal constant of \citet{massart1990tight} and the validity domain extended by \citet{reeve2024short} (to all non-negative scalars $\lambda$).

\propl[DKW inequality - Reeve's update]{
Let $U_1, \dotsc, U_N$ be $N$ independent random variables that are super-uniform.

Then, for all $\lambda \geq 0$, 
\begin{equation}\label{eq_dkwm}
\Pro{\sqrt{N}\sup\limits_{t\in [0,1]}\paren{\frac1N\sum_{i=1}^N\indev{U_i\leq t} - t}> \lambda}\leq \exp\left(-2\lambda^2\right).
\end{equation}
}{DKWin}
This concentration inequality has been used in the past in the JER setting of deterministic regions \citep{durand2020post} and in the top-$k$ setting \citep{MBR2024}, see \secref{sec_homo_JER}, although there was a gap in a proof in \citet{durand2020post}, see \appref{appendix_gaps}.

The second inequality is derived from Wellner's inequality and provided by \citet{MBR2024}. 
\propl[Uniform Wellner's inequality]{
Let $U_1, \dotsc, U_N$ be $N$ independent random variables that are super-uniform, and let $\kappa = \frac{\pi^2}{6}$. For all $\delta \in (0,1]$, 
\[
\Pro[4]{\forall t\in (0,1], \frac1N\sum_{i=1}^N\indev{ U_i\leq t}\leq t h^{-1}\paren{\frac{2\log(\kappa/\delta) + 4\log\paren[1]{1+ \log_2\paren{1/t}}}{Nt}}} \geq 1-\delta,
\]
where $h$ is the function defined by $h(x) = x(\log x-1)+1$ for all $x>1$.
}{Wellnerin}
This concentration inequality has been used in the top-$k$ setting, see \secref{sec_homo_JER}. The proof of the two last inequalities to the super-uniform case is deferred to \appref{proof_DKWin}.
The third inequality, Simes inequality \citep{simes1986improved}, has been extensively used in the literature \citep{goeman2011multiple, blanchard2020post}.

\propl[Simes inequality]{
Let $U_1, \dotsc, U_N$ be $N$ independent random variables that are super-uniform.

Then for all $\delta \in [0,1]$, $\Proo{\exists k\in \NN : U_{(k)}\leq \frac{k\delta}{N}}{}\leq \delta.$ 
}{Simesin}
We recall its simple proof using the classical Benjamini-Hochberg (BH, \cite{benjamini1995controlling}) procedure, because a similar argument will be used to prove the new heterogeneous analog to this inequality in \secref{sec_hetero_simes}.
\begin{proof}
We apply the BH procedure to $U_1, \dotsc, U_N$, seen as $N$ $p$-values under the null, at level $\delta$. It controls the FDR at level $\delta$ (even with super-uniformity instead of uniformity under the null), and given that all $U_i$ are under the null, its FDR is equal to its FWER, which in turn is exactly $\Proo{\exists k\in \NN : U_{(k)}\leq \frac{\delta k}{N}}{}$.
\end{proof}
\rem{
Given that the FDR control of the Benjamini-Hochberg procedure is valid under a more general dependence setting than independence, namely, the wPRDS condition \citep{benjamini2001control}, such is also the case for the Simes inequality. For more information about the wPRDS condition, see \citet{MR2448601} and \citet{giraud2021introduction}.
}

In \citet{MBR2024}, Simes inequality is given under another form, closer to the formulation of the inequalities of Propositions \ref{DKWin} and \ref{Wellnerin}.
\propl[Simes inequality - stochastic process version]{
Let $U_1, \dotsc, U_N$ be $N$ independent random variables that are super-uniform. For all $\delta \in (0,1]$, 
\[
\Pro[4]{\forall t >0, \frac1N\sum_{i=1}^N\indev{ U_i\leq t}<\frac{t}{\delta}} \geq 1-\delta.
\]
}{Simesin_var}
For completeness, we give the proof of the last proposition in \appref{proof_Simesin_var}.
\rem{
Notice that in our statement, a ``$<$'' sign is used, contrary to the ``$\leq$'' used in \citet{MBR2024}. It didn't matter in \citet{MBR2024} because the authors worked with uniform random variables, but in the general context of Assumption \ref{ass_super_unif}, our version can lead to less conservative bounds (see \secref{sec_homo_JER}).
}

Finally, we stress that these inequalities are not designed to use known information about heterogeneity of the data and are instead valid under the ``worst case'' that is the homogeneous uniform case. Heterogeneous analogues will be introduced in \secref{sec_hetero_concentration}.

\subsection{Confidence envelopes by the inversion procedure}\label{sec_homo_inversion}

In this section, we recall the inversion procedure introduced by \citet{Genovese2006}, which uses the notion of local test.
We first introduce, for $t\in \RR$ and $A \subset \Nm$, a notation for the number of elements in $A \cap R(t)$, where $R(t) = \set{i \in \Nm : p_i\leq t}$:
\begin{equation}\label{def_ia}
 i_A(t) = \sum\limits_{i\in A}\indev{ p_i\leq t}.
\end{equation}
When $A = \Nm$, we simplify the notation to $i(t)$.

\defnl[Local test]{Let $\alpha \in [0,1]$ and $A\subseteq \Nm$. Then we call $H_A = \bigcap\limits_{i\in A}H_i$ the intersection hypothesis for the subset $A$, and we say that a statistic $\varphi_A$ is a local test for $A$ at level $\alpha\in[0,1]$ if, for all $\mu \in H_A, \Proo{\varphi_A = 1}{\mu}\leq \alpha$.\\
A family $\paren{\varphi_A}_{A\subseteq \Nm}$ of local tests has level $\alpha\in[0,1]$ if for all $A\subseteq \Nm$, $\varphi_A$ is a local test for $A$ at level $ \alpha$.
}{def_local_test}

We can easily define local tests using concentration inequalities like the ones studied in previous section, as in the following example.

\ex{ Let $\alpha \in (0,1]$ and assume \autoref{ass_super_unif} and \autoref{ass_indep}. Then for $A\subseteq \Nm$, a local test of level $\alpha$ derived from \autoref{Simesin_var} is expressed by \[\varphi_A = \sup\limits_{t\in (0,1]}\indev{\frac{1}{|A|}i_A(t)\geq \frac{t}{\alpha}}.\] 
}

Denote $\cN_\varphi(X) = \set[1]{A \subseteq \Nm \text{ : } \varphi_A(X) = 0} $ the set of potential candidates for $\cH_0(\mu)$ according to our local tests (notice that it is a $\paren{1-\alpha}$-confidence set for $\cH_0$).
\prop[Inversion procedure, \cite{Genovese2006}]{Let $\alpha \in [0,1]$. Assume that $\paren{\varphi_A}_{A\subseteq \Nm}$ is a family of local tests at level $\alpha$. Then, the functional $\hat{V}^{\IP}_{\varphi}$ defined by $\hat{V}^{\IP}_{\varphi} (S) = \max\limits_{A \in \cN_\varphi}|A\cap S|$ is a confidence bound at level $\alpha$.}

We say that a confidence envelope is 
an \textit{exact shortcut} (for the local test family $\varphi$) if it is 
equal to $\hat{V}^{\IP}_{\varphi}$, and we 
say that this confidence envelope is a \textit{shortcut} if it
is an upper bound of $\hat{V}^{\IP}_{\varphi}$.

Note that the computation of the inversion procedure envelope is NP-hard in general,
hence a challenging goal is to express $\hat{V}^{\IP}_{\varphi}$ as an exact shortcut computable
 in polynomial time.
 Structural assumptions on the local tests can lead to such exact shortcuts. One of the most usual 
 class of local tests is the following,
 where the rejection thresholds of a local test depend only on the cardinality
 of the matching subset of hypotheses.

\defnl[Homogeneous Simes-like local tests]{ Let $ \Phi = \paren{\varphi_A}_{A \subseteq \Nm}$ be a family of local tests. We say that $\Phi$ is a homogeneous Simes-like local test family if, for all $A\subseteq \Nm$,
\begin{equation}\label{locform}
\varphi_A = \max_{i \in \llbracket 1, |A|\rrbracket}\indev{p_{(i:A)}\leq \ell_{i:|A|}},
\end{equation}
with $\paren{\ell_{i:n}}_{\substack{
0\leq n\leq m\\
1 \leq i\leq m
}}$ such that $\ell_{i:n_1}\geq \ell_{i:n_2}$ for all $n_1 \leq n_2$ and $i\in \Nm$.}{defsimeslike}
Note that the homogeneous term will be dropped when there is no ambiguity on the working case. We call \textit{vanilla Simes local test family} the previous local test family for the specific choice $\ell_{i:n} = \frac{i\alpha}{n}$.

\propl[\cite{goeman2021only}, Lemma 6]{Assume that, for all $A\subseteq \Nm$, $\varphi_A$ is of the form \eqref{locform}, then 
\begin{equation}\label{eq_SCgoemann}
\forall S \subseteq \Nm, \hat{V}^{\IP}_{\varphi}(S) =\min\limits_{1\leq u\leq |S|}\set{\abs{\set{i\in S : p_i> \ell_{u:\hat{m}_0^{\GHS}}}}+u-1},
\end{equation}
where 

\begin{equation} \label{m0goemann} \hat{m}_0^{\GHS} = \max \set{n\in \llbracket0,m\rrbracket : \forall i \in         \llbracket 1, n\rrbracket, p_{\brac{i}}> \ell_{(n-i+1):n}}.
\end{equation}
The same result holds true if the $\leq$ inequality is replaced
by $<$ in \eqref{locform}, and $>$ by $\geq$ in \eqref{eq_SCgoemann} and \eqref{m0goemann}. 
}{SCgoemann}

The proof of this proposition in \citet{goeman2021only} relies on the closed testing framework first used by \citet{goeman2011multiple} as an equivalent point of view to the inversion procedure confidence envelope. In \secref{sec_example_shortcut}, we present a new proof relying instead on JER tools (see \secref{sec_homo_JER}). The above proposition offers computation in quadratic time when the statistical framework leads to homogeneous Simes-like local tests. Note that, in the above, $\hat m_0^{\GHS}$ acts as an over-estimator of $m_0$.

\rem{\autoref{SCgoemann} is well adapted to the homogeneous case, but the local tests derived from  heterogeneous inequalities that will be introduced in \secref{sec_hetero_concentration}
will fail to satisfy the conditions of Proposition~\ref{SCgoemann}. Namely, while they
satisfy a form similar to the one of homogeneous Simes-like local tests, they are typically expressed by \[
\varphi_A = \max_{i \in \llbracket 1, |A|\rrbracket}\indev{p_{(i:A)}\leq \ell_{i:A}},
\]
where, for all $i\in \Nm$, $\paren{\ell_{i:A}}_{A\subseteq\Nm}$ is nonincreasing  with respect to the inclusion partial order, and, for all $A\subseteq \Nm$, $\paren{\ell_{i:A}}_{i\in \Nm}$ is nondecreasing.\\
Thus these local tests will, in general, fully depend on $A$ rather than only $|A|$, 
which is the reason why the shortcut from \autoref{SCgoemann} cannot be applied when the data heterogeneity is considered to construct local tests.}

\subsection{Reference families, JER and interpolation}\label{sec_homo_JER}

\subsubsection{JER methods}

We present here the alternative approach to constructing confidence bounds originally introduced in \citet{blanchard2020post}, based on the two concepts of reference families and Joint Error Rate (JER) that we start by defining. A reference family is a finite family $\Rfam_\alpha(X)=(R_k,\zeta_k)_{k\in \cK}$ with $R_k\subseteq\Nm$, $\zeta_k\in\mathbb N$, where everything (that is, $\cK$ and all the $R_k$ and $\zeta_k$) depends on $(X,\alpha)$, but the dependency is not explicitly written. Likewise, when there is no ambiguity, the reference family will simply be denoted $\Rfam_\alpha$ or even $\Rfam$. The JER of a reference family is an error criterion that takes the following form:
\begin{equation}
\JER(\Rfam) = \Pro{\exists k\in\cK, |R_k\cap\cH_0| > \zeta_k } = \Pro{\exists k\in\cK, V(R_k) > \zeta_k }.
\label{eq_jer}
\end{equation}
Think of each $\zeta_k$ as an over-estimator of the number of false discoveries in $R_k$. Then the JER is the probability that this upper bound fails for at least one $k$. If the JER is small, then, with high probability, the $\zeta_k$'s are simultaneous upper bounds of the false discoveries in all the regions $R_k$'s.

We are then interested in constructing reference families that control the JER at any target level, the inequation that we want to see satisfied is then:
\begin{equation*}
\forall \mu\in\statfam, \forall \alpha \in (0,1),\; \JER_{\mu}(\Rfam_\alpha) \leq \alpha,
\end{equation*}
or, in other words, at any given $\alpha \in (0,1)$,
\begin{equation}
\forall \mu\in\statfam,\; \Proo{\forall k\in\cK, V(R_k)\leq \zeta_k}{\mu} \geq 1-\alpha.
\label{eq_jer_control}
\end{equation}
Note that Equation \eqref{eq_jer_control} is really similar to Equation \eqref{eq_confidence_formal} except that the uniform guarantee, instead of being over all $S\subseteq \Nm$, is only over all the $R_k\subseteq \Nm, k\in \cK$, with $\cK$ having cardinality potentially much smaller than $2^m$. A simultaneous confidence bound over all subsets of hypotheses is then derived from a JER-controlling reference family by interpolation. Let 
\begin{equation}
\Afam(\Rfam)= \set{A\subseteq \Nm:  \forall k\in\cK, |R_k\cap A| \leq \zeta_k }.
\label{eq_a}
\end{equation}
The JER control informs us that, with high probability, $\cH_0\in\Afam (\Rfam)$. We leverage this information with the following confidence bound construction:
\begin{equation}
\hat V^{\JER}_\Rfam(S) = \max_{A\in\Afam (\Rfam)}|S\cap A|,
\label{eq_vstar}
\end{equation}
which optimally uses the information provided by the JER control of the reference family, as proven by Proposition 2.1 of \citet{blanchard2020post}. Note that there exists an equivalence between constructions of confidence envelopes based on reference families and on the inversion procedure
based on local tests. The links between the two approaches are discussed in \citet[Appendix S-1.3]{blanchard2018supplement}. 

Because of the combinatorial
maximum operation $\max_{A\in\Afam(\Rfam)}$, 
the computation of $\hat V^{\JER}_\Rfam(S)$ is intractable in general (see Proposition 2.2 of \citealp{blanchard2020post}), but for specific structures of reference families introduced in the next paragraph, a polynomial computation can be achieved.

First, if the set $\set{R_k : k\in \cK}$ is totally ordered for the inclusion relation, that is, the $R_k$'s are nested, then, for all $S\subseteq\Nm$, 
\begin{equation}\label{eq_vbar}
\hat V^{\JER}_\Rfam(S)=\min_{k\in\cK}\paren[1]{\zeta_k+\abs{S\setminus R_k}}\wedge|S|,
\end{equation}
by Proposition 2.5 of \citet{blanchard2020post}. Second, if the reference family satisfies the forest structure constraint introduced by \citet{durand2020post} and defined by: 
\[
\forall k,k'\in\cK,\;R_k\cap R_{k'}\in\set{R_k, R_{k'}, \varnothing},
\]
then fast algorithms also exist to compute $\hat V^{\JER}_\Rfam(S)$ for a single $S\subseteq\Nm$ \citep[Algorithm 1]{durand2020post} or, even better, to compute $\paren[1]{\hat V^{\JER}_\Rfam(S_t)}_{1\leq t\leq m}$ for a path of nested hypotheses subsets $S_1\subsetneq \dotsb \subsetneq S_m$ \citep[Algorithm 4]{durand2025fastalgorithmcomputecurve}. Note that the forest structure condition encompasses the nested condition.

We can now recall the three settings that have been studied for the construction of reference families that control the JER. They will be studied again in the context of heterogeneity in \secref{sec_hetero_JER}.
\subsubsection{\texorpdfstring{$k$}{k}-FWER setting}
The first setting is the ``$k$-FWER'' setting, in which we set $\cK=\llbracket1,K\rrbracket$ for some $K\leq m$, $\zeta_k=k-1$ and $R_k=\set{i\in\Nm: p_i\leq \tau_k}$, with a threshold family $\paren{\tau_k}_{1\leq k\leq K}$ that is nondecreasing and such that the JER control holds. By monotonicity of $\paren{\tau_k}_{1\leq k\leq K}$, such reference family satisfies the nested condition so \equaref{eq_vbar} holds and an algorithm given by \citet{10.1093/bioinformatics/btac693} can be applied for fast computation of $\hat V^{\JER}_\Rfam(S)$ for any given $S$. The name of the setting comes from the fact that the JER control in that case is equivalent to the fact that, simultaneously on $k$, each $R_k$ controls the $k$-FWER. An example of templates that yields JER control under our Assumptions is the Simes template introduced by \citet[Section S-2]{blanchard2018supplement}, defined by $\tau_k=\frac{\alpha k}{m}$:
\thml[Simes template yields JER control]{
Under Assumptions \ref{ass_super_unif} and \ref{ass_indep}, for all $\mu\in\statfam$, for all $\alpha\in(0,1)$, the reference family $\Rfam=\paren[a]{R_k, k-1}_{1\leq k \leq m}$, with $R_k=\set[a]{i\in\Nm: p_i\leq \frac{\alpha k}{m}}$, controls the JER at level $\alpha$.
}{thm_simes_kFWER}

We defer the proof of the last theorem to \appref{proof_thm_simes_kFWER}

\reml{The same proof scheme proves that Hommel's inequality \citep{blanchard2018supplement} also induces a $k$-FWER reference family that controls the JER, given by \[R_k=\set[a]{i\in\Nm: p_i\leq \frac{\alpha k}{m} \sum\limits_{i=1}^m\frac1i},\] without needing \autoref{ass_indep}.}{rem_hommel}

\subsubsection{Top-\texorpdfstring{$k$}{k} setting}

We move on to second setting, namely the ``top-$k$'' setting that appeared in \citet{Katsevichramdas2020}. Let $\sigma$ a (random) permutation that orders the $p$-values, i.e., $p_{\sigma(k)}=p_{(k)}$ for all $k\in\Nm$. Because we do not assume that the $p$-values have continuous distributions, there may be ties and $\sigma$ may not be unique. The top-$k$ setting consists in fixing 
\begin{equation}\label{eq_def_Rk_top_k}
    R_k=\set[1]{\sigma(1),\dotsc,\sigma(k)}
\end{equation}
for all $k\in \Nm$, and to find appropriate $\zeta_k$'s such that the JER control holds. 
\reml{
In past works assuming that the $p$-values have continuous distributions, $R_k$ was rather defined as $R_k^\dag=\set{i\in\Nm: p_i\leq p_{(k)}}$, and it is a.s. equal to our definition under such assumption. However, our definition ensures that $|R_k|=k$ even in the case of ties, which is practical for computational purposes, while we can only say that $\abs[a]{R_k^\dag}\geq k$ under \autoref{ass_super_unif}. Thankfully, the choice of $R_k$ or $R_k^\dag$ is inconsequential with respect to the confidence envelope, as is proven in \appref{sub_appendix_ties}.
}{rem_topk_ties}
\reml{
We can always assume that the sequence $\paren{\zeta_k}_{1 \leq k \leq m}$ is nondecreasing, up to replacing $\zeta_k$ by $\min(\zeta_k, \zeta_{k+1}, \dotsc, \zeta_m)$. Indeed, if the event inside the probability of Equation \eqref{eq_jer_control} is realized, for $k^*\geq k$ such that $\zeta_{k^*}=\min(\zeta_k, \zeta_{k+1}, \dotsc, \zeta_m)$, we have $V(R_k)\leq V\paren{R_{k^*}}\leq \zeta_{k^*}$.
}{rem_topk_nondecreasing}

Many constructions are proposed by \citet{MBR2024}, under the assumptions that the $p$-values are uniform under the nulls. These constructions can be extended to the case of \autoref{ass_super_unif} thanks to the following proposition.

\propl[Inequalities for the top-$k$ setting, extended to super-uniformity]{
Assume that Assumptions \ref{ass_super_unif} and \ref{ass_indep} hold.

Then, for all $\delta \in (0,1]$, and for all $\mu\in\statfam$,
\begin{equation}\label{eq_dkwm_topk}
\Proo{\forall t\in[0,1], i_{\cH_0(\mu)}(t) \leq mt +\sqrt{m}\sqrt{\frac12\log\frac1\delta}}{\mu}\geq 1-\delta,
\end{equation}
\begin{equation}\label{eq_wellner_topk}
\Proo{\forall t\in(0,1], i_{\cH_0(\mu)}(t) \leq mt h^{-1}\paren{\frac{2\log(\kappa/\delta) + 4\log\paren[1]{1+ \log_2\paren{1/t}}}{mt}}}{\mu}\geq 1-\delta,
\end{equation}
\begin{equation}\label{eq_simes_topk}
\Proo{\forall t>0, i_{\cH_0(\mu)}(t) < \frac{mt}{\delta} }{\mu}\geq 1-\delta.
\end{equation}
}{prop_concen_topk_hmogeneous}

The proof of the previous proposition is given in \appref{proof_prop_concen_topk_hmogeneous}.
This proposition allows to deduce the following JER-controlling families, as defined in the following theorem.

\thml[Reference families for the top-$k$ setting, extended to super-uniformity]{
Assume that Assumptions \ref{ass_super_unif} and \ref{ass_indep} hold. Let $\alpha\in(0,1)$, $\mu\in\statfam$, and $X\sim\mu$. The following three reference families control the JER at level $\alpha$.
\begin{enumerate}
    \item the family $\paren{R_k,\zeta_k}_{1 \leq k \leq m}$ with, for all $k\in\Nm$, $R_k$ as in \eqref{eq_def_Rk_top_k} and \[\zeta_k=\indev[a]{p_{(k)}>0}\floor[a]{mp_{(k)}+\sqrt{\frac m2\log\frac1\alpha}}\wedge k,\]
    \item the family $\paren{R_k,\zeta_k}_{1 \leq k \leq m}$ with, for all $k\in\Nm$, $R_k$ as in \eqref{eq_def_Rk_top_k} and \[\zeta_k=\indev[a]{p_{(k)}>0}\floor[a]{mp_{(k)} h^{-1}\paren{\frac{2\log(\kappa/\alpha) + 4\log\paren[1]{1+ \log_2\paren{1/p_{(k)}}}}{mp_{(k)}}}}\wedge k,\]
    \item\label{item_topk_simes} the family $\paren{R_k,\zeta_k}_{1 \leq k \leq m}$ with, for all $k\in\Nm$, $R_k$ as in \eqref{eq_def_Rk_top_k} and \[\zeta_k=\indev[a]{p_{(k)}>0}\floor[a]{\frac{mp_{(k)}}{\alpha}^-}\wedge k,\]
\end{enumerate}
where $\floor[a]{\cdot}$ denotes the floor function and $\floor[a]{\cdot^-}$ is its left-limit.
}{thm_families_topk_homogeneous}
\reml{
Those bounds are similar to the Equations (9), (10) and (15) of \citet{MBR2024}, except that they are bounds for the false discoveries and not the FDP, that they are valid under super-uniformity under the null instead of just uniformity, that they use the floor functions to better exploit the fact that the number of false discoveries is an integer, and that they use a $\indev[a]{p_{(k)}>0}$ term that was not useful in \citet{MBR2024} because the $p$-values were assumed to have continuous distributions, whereas here, in the context of \autoref{ass_super_unif}, it becomes necessary.
}{rem_diff_with_mbr}
\reml{
Note that each definition of $\zeta_k$ in \autoref{thm_families_topk_homogeneous} is of the form $f\paren[a]{p_{(k)}}\wedge k$ for some measurable function $f$ that only takes integer values. Also note that we can use the same argument than in \autoref{rem_topk_nondecreasing}: up to replacing $f(\cdot)$ by $t\mapsto \min_{t\leq t'\leq 1}f(t')$ (the minimum is well-defined because $f$ only takes integer values), we can furthermore assume that $f$ is nondecreasing.
}{rem_topk_nondecreasing_2}
\begin{proof}
First note that, almost surely, $V(R_k)\leq \sum_{i\in \cH_0}\indev[a]{p_i\leq p_{(k)}}$ because, if $i\in R_k$, then $p_i\leq p_{\sigma(k)}=p_{(k)}$. Then note that, by super-uniformity, $p_i>0$ a.s. if $i\in\cH_0$, hence $V(R_k)=0$ if $p_{(k)}=0$.

Then, like in \citet{MBR2024}, we just apply inequalities \eqref{eq_dkwm_topk}, \eqref{eq_wellner_topk} and \eqref{eq_simes_topk} by setting $t=p_{(k)}$ if $p_{(k)}>0$. The floor function can be applied because $V(R_k)$ is an integer. The minimum with $k$ appears from $V(R_k)\leq |R_k|=k$. Finally, in the case of Equation \eqref{eq_simes_topk}, the left-limit of the floor function can be applied instead of the floor function thanks to the strict inequality in \eqref{eq_simes_topk}.
\end{proof}

\reml{For computational purposes, note that, for all $t\in\RR$, $\floor[a]{t^-}=\ceil[a]{t}-1$, where $\ceil[a]{\cdot}$ denotes the ceiling function. See \autoref{counterex_simes_equal_simes} for an example where using the left-limit or not does matter.}{rem_right_floor}

A top-$k$ reference family satisfies the nested condition, so \equaref{eq_vbar} can also be applied for fast computation of $\hat V^{\JER}_\Rfam(S)$ for a given $S$. 
In particular, it can be applied to the reference top-$k$ sets themselves, giving rise to a possibly improved bound $\widetilde{\zeta}_k = \hat V^{\JER}_\Rfam(R_k) \leq \zeta_k, k\in\Nm$
(see \citealp{blanchard2020post} for a discussion).
To compute the whole family $(\widetilde{\zeta}_k)_{1 \leq k \leq m}$
more efficiently than by repeatedly applying \eqref{eq_vbar}, 
we introduce the following new \algoref{algo_topk_path}, which can spare the (possibly costly) computation of some of the original bounds $\zeta_k$.

\begin{algorithm}
\KwData{$p$-values $\paren{p_k}_{1 \leq k \leq m}$, measurable nondecreasing function $f$ taking integer values }
\KwResult{$\paren[1]{\hat V^{\JER}_\Rfam(R_k)}_{1 \leq k \leq m}$ on top-$k$ path}
$k \longleftarrow 1$\;
$\tilde{\zeta}_0 \longleftarrow 0$\;
\While{$k\leq m$}{
$\zeta_k \longleftarrow f\paren{p_{(k)}} \wedge k$\;
\If{$\zeta_k > \tilde{\zeta}_{k-1}$}{
$j\longleftarrow \paren{\zeta_k - \tilde{\zeta}_{k-1}}\wedge \paren{m-k+1}$\;
\For{$i = 1$ to $j$}{
$\tilde{\zeta}_{k+i-1} \longleftarrow \tilde{\zeta}_{k-1}+i$\;}
$k \longleftarrow k+j$\;}
\Else{$\tilde{\zeta}_{k}\longleftarrow \tilde{\zeta}_{k-1}$\;
$k\longleftarrow k+1$\;
}
}

\Return $\paren{\tilde{\zeta}_{k}}_{1 \leq k \leq m}$.
\caption{Computation of $\paren[1]{\hat V^{\JER}_\Rfam(R_k)}_{1 \leq k \leq m}$}
\label{algo_topk_path}
\end{algorithm}

\propl[]{
In the setting of a top-$k$ reference family (see~\eqref{eq_def_Rk_top_k}), assume that  $\zeta_k=f\paren[a]{p_{(k)}}\wedge k$ for some measurable, nondecreasing function $f$ that only takes integer values (see \autoref{rem_topk_nondecreasing_2}).
Then, \algoref{algo_topk_path} indeed returns the improved bound $\paren[1]{\hat V^{\JER}_\Rfam(R_k)}_{1 \leq k \leq m}=\paren[1]{\tilde{\zeta}_{k}}_{1 \leq k \leq m} $ for the reference family $\Rfam = \paren[1]{R_k,\zeta_k}_{1 \leq k \leq m}$. }{prop_algo_topk_path}

We defer the proof of the previous proposition to \appref{proof_prop_algo_topk_path}.

\reml{
\equaref{eq_vstar_algo_2} provides a simpler algorithm than \algoref{algo_topk_path}, that could be implemented with a simple \texttt{for} loop. However, this would imply to compute each $\zeta_k$, which can be costly, depending on the function $f$. On the contrary, \algoref{algo_topk_path} leverages the fact that not all $\zeta_k$ need to be computed: $\zeta_k$ is computed only at the steps $k$ visited by the \texttt{while} loop.
}{rem_simpler_algo}

We end this overview of the top-$k$ setting by remarking that there exists a direct correspondence between this setting and the $k$-FWER setting. Namely, from a reference family built according to one of these settings, we can build another one in the other setting, and they yield the exact same confidence envelopes. Furthermore, the two reference families using Simes inequality that we encountered in Theorems \ref{thm_simes_kFWER} and \ref{thm_families_topk_homogeneous} also yield the exact same confidence envelopes. This point is formalized precisely in \appref{sub_appendix_topk_kFWER}.

\subsubsection{Deterministic setting}

We finally move to the third setting relevant for applying the JER
point of view, called ``deterministic setting''.
In that setting, the regions $R_k$ are deterministic, fixed in advance (possibly reflecting some prior information), and we build appropriate $\zeta_k$'s such that the JER control holds. Any generic method that can upper bound the number of false discoveries in a deterministic set of hypotheses at any confidence level (including other confidence envelopes) can be applied to $R_k$ with the confidence level set to $\frac \alpha K$, where $K=\abs[a]{\cK}$, and JER control ensues by a simple union bound argument. We tend to choose our regions such that the forest structure conditions hold, so that \citet[Algorithm 4]{durand2025fastalgorithmcomputecurve} can be applied.

One such construction has been done in \citet{durand2020post}, based on the DKW inequality (see \autoref{DKWin}).
\propl[Proposition 1 of \citealp{durand2020post}]{
The family $\paren{R_k,\zeta_k}_{k\in\cK}$ with $\cK$ a deterministic set and, for all $k\in\cK$, $R_k$ deterministic, and 
\begin{equation}\label{eq:zeta_deterministic}
\zeta_k=\min_{t\in[0,1)}\floor[a]{\paren[a]{\frac{\lambda_{\alpha/K}}{2(1-t)}+\sqrt{\frac{\lambda_{\alpha/K}^2}{4(1-t)^2} +  \frac{|R_k|-i_{R_k}(t)  }{1-t}}}^2   }\wedge |R_k|,
\end{equation}
where $\lambda_{\alpha/K}=\sqrt{\frac12\log \frac K\alpha}$, controls the JER at level $\alpha$.
}{prop_dkwm_deter}

\section{General shortcuts for the inversion procedure}\label{sec_general_shortcuts}

This section introduces new shortcuts for local tests having a specific structure. Namely, we assume that our local tests, for all $A \subseteq \Nm$, are expressed as 
\begin{equation}\label{loctestenv}
\varphi_A = \max_{t\in [0,1]}\varphi_{A,t}. 
\end{equation}
While any local test can obviously be written under this form through
the trivial choice $\varphi_{A,t}=\varphi_A$ for all $t$,
the underlying guiding idea for considering this expression
is that, for particular representations taking this form, 
we are able to compute efficiently --- that is, in polynomial time ---  the minimum 
of $\varphi_{A,t}$ for a fixed $t$, over sets $A$ of a fixed cardinality.
Therefore, our aim in this section is to derive shortcuts relying first on taking such minima 
for fixed $t$.

In the remainder of this section, we assume that we 
have at hand a family of local tests expressed in the form~\eqref{loctestenv}.
\secref{sec_shortcuts} focuses on defining and establishing validity of the
new shortcuts.
In \secref{sec_example_shortcut} we
provide a methodology to derive confidence bounds, which turns out to be an exact shortcut for the inversion procedure
in the special case of homogeneous Simes-like local tests.

Local tests derived from ecdf envelopes, like the DKW and Wellner inequalities,
have a natural expression in the form~\eqref{loctestenv}.
That holds for Simes-like local tests as well,
as stated by the next lemma. Furthermore, the heterogeneous local test families that will be introduced in \secref{sec_hetero_concentration} are expressed as~\eqref{loctestenv} as well.

\leml[Alternative form of Simes-like tests]{Let A $\subseteq \Nm$, assume that $\paren{\varphi_A}_{A\subseteq \Nm} $ is a homogeneous Simes-like local test family with the sequence $\paren{\ell_{i:n}}_{1\leq i\leq m}$ nondecreasing for all $n\in \Nm$. Then, with the convention $\ell_{0:n} = -1$ for all $n\in \Nm$, for all $A\subseteq \Nm$:  \[\varphi_A = \max\limits_{t \in [0,1] } \indev{t\leq  \ell _{i_A(t):|A|}},\]
where $i_A$ is defined by \eqref{def_ia}.}{goemannloct}
We give the proof of the previous lemma in \appref{proof_goemannloct}.

\subsection{Shortcuts for the inversion procedure}\label{sec_shortcuts}
This section introduces new shortcuts for the inversion procedure.
These shortcuts rely on the fundamental statistic $\varphi_{S,n,t}$ that is defined for $S\subseteq \Nm$, $t\in [0,1]$ and $n \in \NS$ by 
\begin{equation} \label{expressionmin}
  \varphi_{S,n,t} = \min\limits_{\substack{A \subseteq \Nm\\ |A\cap S| = n}}\varphi_{A,t}.
\end{equation} 

For fixed $S$ and $t$, this statistic is computable in polynomial time (in $m$, independently of $n$) for all the local test families considered in the present article (including heterogeneous settings). Furthermore, as will be seen shortly (in \autoref{continuoustodiscrete}), taking the minimum or maximum over $t\in[0,1]$ can in fact be restricted to finite subsets
of $[0,1]$, thus preserving the polynomial complexity for such operations.

\propl{ The following confidence envelopes, defined for all $S\subseteq \Nm$ by
\begin{equation}\label{SC1}
\hat{V}^{\SCo}_{\varphi} (S) =\max\set{n\in \NS:\max\limits_{t \in [0,1]} \varphi_{S,n,t} = 0},
\end{equation}
and 
\begin{equation}\label{SC2}
\hat{V}^{\SCt}_{\varphi} (S) = \min\limits_{t \in [0,1]}\set[1]{\max\set{n \in \NS : \varphi_{S,n,t} = 0}},
\end{equation}
satisfy
\[\forall S \subseteq \Nm, \hat{V}^{\IP}_{\varphi} (S) \leq  \hat{V}^{\SCo}_{\varphi} (S) \leq \hat{V}^{\SCt}_{\varphi} (S).\]
}{SChetero} 

The proof of \autoref{SChetero} is provided in \appref{proof_SChetero}.

To fully benefit from the efficient computations of $\varphi_{S,n,t}$, one needs to be able to reduce the ``$\max\limits_{t \in [0,1]}$'' and the ``$\min\limits_{t \in [0,1]}$'' in \autoref{SChetero} into a ``$\max\limits_{t \in \cA}$'' and a ``$\min\limits_{t \in \cA}$'' with $\cA$ the set of realized $p$-values. The next proposition provides a monotonicity assumption on $\varphi_{A,\cdot}$, for $A\subseteq \Nm$, to this end.

\propl{Suppose, defining $p_{(0:A)} = p_0 = 0$ for all $A \subseteq \Nm$, that for all $A\subseteq \Nm$
$\varphi_{A,\cdot}$ is nonincreasing on $[p_{\paren{i:A}},p_{((i+1):A)})$ for all $i\in \llbracket 0, |A|-1\rrbracket$.

Then the three following equalities occur for all $A \subseteq \Nm$, $S \subseteq \Nm$ and $n \in \NM$:
\[\varphi_A = \max_{i\in A\cup\set{0}} \varphi_{A,p_i},\]
\[\max\limits_{t \in [0,1]} \varphi_{S,n,t} = \max\limits_{i \in \NM} \varphi_{S,n,p_i},\]
\[\hat{V}^{\SCt}_{\varphi} (S) = \min\limits_{i \in \NM}\set{\max\set{n \in \NS : \varphi_{S,n,p_i} = 0}}.\]
}{continuoustodiscrete}

\autoref{continuoustodiscrete} is proved in \appref{proof_continuoustodiscrete}.

The monotonicity assumption from the previous proposition will cover all the local tests provided by concentration inequalities considered in the present article. Thus, while we keep using
the notation $\max_{t \in [0,1]}$ for notational convenience
in the rest of the paper, in practice only evaluations
at $t$ equal to one of the $p$-values (or $t=0$) are required.

Finally, we give a sufficient monotonicity condition on $\varphi_{S,\cdot,t}$ for our two shortcuts for the inversion procedure to be equal: 
\propl{Assume that, for $S \subseteq \Nm$, for all $t\in [0,1]$, $\varphi_{S,\cdot,t}$ is nondecreasing, that is, for all $n \in \NS$ : \[\varphi_{S,n,t} = 1 \Rightarrow \forall k\geq n, \varphi_{S,k,t} =1.\] Then $\hat{V}^{\SCo}_{\varphi} (S) = \hat{V}^{\SCt}_{\varphi} (S) 
$.}{SC1eqSC2}
The proof of \autoref{SC1eqSC2} is deferred to \appref{proof_SC1eqSC2}.
Note that under the assumptions of \autoref{continuoustodiscrete}, we only need that $\varphi_{S,\cdot,p}$ is nondecreasing for all $p \in \set{0} \cup \set{p_i, i\in \Nm}$.

\subsection{Applications to homogeneous inequalities}\label{sec_example_shortcut}

We now study the 
shortcuts defined previously when the local tests are based
on homogeneous inequalities, namely
homogeneous Simes-like local tests, and DKW inequality based local tests. In both cases,
we first give an explicit expression for the shortcuts, then investigate
their use to obtain an over-estimator $\mzsc = \hat{V}^{\SCo}_{\varphi}
\paren[1]{\Nm}$ for $m_0=\abs{\cH_0}$, which is then combined with the JER point of view, in
order to obtain ``adaptive JER'' envelopes.

In both cases, we establish that this adaptive  JER envelope coincides with the inversion procedure. This shows as a by-product that the adaptive Simes procedure
and the adaptive DKW-based top-$k$ procedure introduced by \citet{MBR2024}
are both exact shortcuts for their respective associated local test families.

The exactness of the proposed
shortcuts in the homogeneous case provides strong motivation
for using a similar approach in the heterogeneous
setting in \secref{sec_hetero_JER}.

\subsubsection{Case of homogeneous Simes-like local tests}\label{sec:recovSCgoemann}

In this section, we assume that the local tests $\varphi_A$ are  homogeneous Simes-like local tests, as by \autoref{defsimeslike}. We define $\ell_{0:n}=-1$ for all $n\in \Nm$ to handle the case when $p$-values are equal to $0$. Recall that, from \autoref{goemannloct}, homogeneous Simes-like local tests can
be reformulated as $\varphi_A = \max\limits_{t\in [0,1]}\varphi_{A,t}$ with $\varphi_{A,t} = \indev{t \leq \ell_{i_A(t):|A|}}$.

Based on this representation, we start with a lemma giving an expression of $\varphi_{S,n,t}$, defined by \equaref{expressionmin}, which will allow us to compute our shortcuts.

\leml{Let $S\subseteq \Nm$ and $n\in \NS$. Then, for all $t\in [0,1]$, \[\varphi_{S,n,t} = \min_{0\leq k\leq i_{S^c}(t)}\indev{t\leq \ell_{\paren{\paren{n + i_S(t)-|S|}\vee 0 +k}:\paren{n+m-|S|-i_{S^c}(t) +k}}}.\] In particular, \[\varphi_{\Nm,n,t} = \indev{t\leq \ell_{\paren{n + i(t) - m}:n}}.\]}{Simeslikemin}

The proof of \autoref{Simeslikemin} is given in \appref{proof_Simeslikemin}.

The following example gives an explicit expression for $\varphi_{\Nm,n,t}$ in the specific case of vanilla Simes local tests,
as a by-product of \autoref{Simeslikemin}.
\exl{Consider the vanilla Simes local tests at level $\alpha \in (0,1]$, that is such that for all $n\in \Nm$ and $i\in \llbracket 1 , n \rrbracket$, $\ell_{i:n} = \frac{i}{m}\alpha$ in  the setting of \autoref{defsimeslike}. Then for all $t\in (0,1]$ and $n\in \Nm$,

\[\varphi_{\Nm,n,t}= \indev{t \leq \frac{n + i(t) - m}{n}\alpha}.\]
}
{minmaxsimes}

We now propose the following "adaptive JER" methodology based on this shortcut
applied to $S=\intr{1,m}$:
\begin{itemize}
\item Define $\mzsc = \hat{V}^{\SCo}_{\varphi}
\paren[1]{\Nm}$.
\item Introduce the reference family  $\Rfam = \paren{R_k,k-1}_{k\in \llbracket 1, \mzsc + 1\rrbracket}$ with 
\begin{equation}
\begin{cases} R_k & = \set{i\in \Nm : p_i \leq \ell_{k:\mzsc}}, \text{ for } k\in \llbracket 1, \mzsc \rrbracket;\\
R_{\mzsc + 1} & = \Nm.
\end{cases}\label{def:adaptreffamilysimes}
\end{equation}
\item Use the confidence envelope based on the JER bound
using this reference family.
\end{itemize}   

The main result of this section is that the adaptive JER confidence envelope coincides with the inversion procedure confidence envelope
using the initial local test family.

The following lemma establishes that the $\mzsc$ coincides with the one from \autoref{SCgoemann}, \equaref{m0goemann}.

\leml{ Defining $\mzsc = \hat{V}^{\SCo}_{\varphi} \paren[1]{\Nm}$, we have
\[\mzsc = \max \set{n\in \llbracket0,m\rrbracket : \forall t \in [0,1], t> \ell_{(n+i(t) -m):n}}.\]
Furthermore, if for all $n \in \Nm$ and $i\in \llbracket 1, n \rrbracket$, $\ell_{i+1:n+1}\geq \ell_{i:n}$, then \[\mzsc=
\hat{V}^{\SCt}_{\varphi} \paren{\Nm} =  \min\limits_{t \in [0,1]}\set[2]{\max\set{n \in \NM : t> \ell_{(n+i(t) -m):n}}}.\]}{scgoemanproof2}

\begin{proof}
The expression from \autoref{Simeslikemin} for $S = \Nm$ is nondecreasing with respect to $n$ when $\ell_{i+1:n'+1}\geq \ell_{i:n'}$ for all $i$ and $ n'$, thus applying \autoref{SC1eqSC2} leads to the conclusion.
\end{proof}

Note that the assumption ``$\ell_{i+1:n+1}\geq \ell_{i:n}$'' was already considered in \citet{goeman2021only} as a sufficient condition for using bisection to compute $\hat{m}_0^{\GHS}$.\\
\corl{In the same framework as \autoref{minmaxsimes}, 

\begin{equation}\label{closedformsimes}
\mzsc = \inf\limits_{t\in (0,\alpha)} \paren{\left\lceil\frac{m - i(t)}{1- t/\alpha} \right\rceil - 1}\wedge \paren[1]{m- i(0)}.
\end{equation}
}{SimesVSC1equalsVSC2}

The proof of the previous corollary is deferred in \appref{proof_SimesVSC1equalsVSC2}.

A similar bound was already considered for Simes top-$k$ adaptive bound in \citet{MBR2024}, but 
it did not consider cases with $p$-values equal to $0$ or to one of Simes thresholds. Denoting $\hat{m}_0^{\MBR}$ the bound from \citet{MBR2024}, we have here $\mzsc = \paren{\ceil{\hat{m}_0^{\MBR}} -1} \wedge \paren[1]{m-i(0)}$ in the framework of vanilla Simes local tests.

The following proposition states that our methodology 
produces an exact shortcut.

\propl{For a homogeneous Simes-like local test family,
consider the adaptive reference family given by~\eqref{def:adaptreffamilysimes}.
Then $\JER(\Rfam)\leq \alpha$ and $\hat{V}^{\IP}_{\varphi} = \hat{V}^{\JER}_\Rfam $.}{eqJERIP}

For completeness, we provide a proof of this proposition in \appref{proof_eqJERIP}

Combining \autoref{eqJERIP} and \autoref{SimesVSC1equalsVSC2}, the adaptive interpolated Simes bound from \citet{MBR2024} applied with our over-estimator defined by \eqref{closedformsimes} is an exact shortcut, computable in time $\mathcal{O}\paren[1]{m\log\paren{m}}$.
\rem{
Note that, for the family $\Rfam$ defined in \autoref{eqJERIP}, the expression of $\hat{V}^{\JER}_\Rfam$ does not match exactly the right-hand side of \equaref{eq_SCgoemann} from \autoref{SCgoemann}. The latter can be written $\hat{V}^{\JER}_{\Rfam_S}$ with the pseudo-reference family $\Rfam_S$ (which depends on $S$) defined by $\Rfam_S =\paren{R_k,k-1}_{k\in \llbracket 1, |S|\rrbracket}$, with $R_k = \set{i\in \Nm : p_i \leq \ell_{k:\mzsc}}$ for $k\in \llbracket 1, |S|\rrbracket$. Finally it can be checked that the proof of \autoref{eqJERIP} also implies that $\hat{V}^{\JER}_\Rfam=\hat{V}^{\JER}_{\Rfam_S}$, thus we obtain a new proof of \autoref{SCgoemann}, without using the closed testing formalism.

}

\subsubsection{Exactness of the adaptive top-\texorpdfstring{$k$}{k} procedure for the DKW inequality}

We first introduce an alternative expression
of \autoref{SCgoemann} suited to the top-$k$ setting, as it allows the user to avoid translating their top-$k$ designed local test into homogeneous Simes like designed local test (see \autoref{rem_topk_local_test_to_homo_simes}). It can be useful when  the functional $f_n$ is computationally hard to invert and it is convenient to prove theoretical results on top-$k$ bounds.
\propl{Assume that there exists a family of non negative valued function $\paren{f_{n}}$ such that, for all $A\subseteq \Nm$,  \begin{equation}\label{eq:topklocaltests} 
\varphi_A =\max_{t\in [0,1]}\indev{i_A(t) > f_{|A|} \paren{t}}.
\end{equation}
If, for $A\subseteq \Nm$, $f_{|A|}$ is nondecreasing, $\varphi_A =\max\limits_{j \in A}\indev{i_A(p_j) > f_{|A|} \paren{p_j}}.$\\
If, furthermore, the function $f_n$ is right continuous for all $n\in \Nm$ and  the function sequence $\paren{f_n}$ is nondecreasing, 
\begin{align}\label{eq_VIP_top_k_view}
\hat{V}^{\IP}_{\varphi}(S) &= \min\limits_{0\leq k\leq |S|}\paren[2]{\floor{f_{\mzsc}\paren{p_{(k : S)}}} + \abs{S\setminus R\paren{p_{(k:S)}}}} 
\end{align}
where  \begin{equation}\label{m0perier}
\mzsc  = \max \set[2]{n\in \llbracket0,m\rrbracket : \forall t \in [0,1], f_{n}\paren{t}\geq n+i(t)-m}.
\end{equation}
}{SCfuncversion}

The proof of this proposition is given in \appref{proof_SCfuncversion}.

In the remainder of this section, the local tests considered are the ones derived from \autoref{DKWin}, that is, we let $\alpha \in (0,1]$ and $\lambda_\alpha = \sqrt{\frac{\log(1/\alpha)}{2}}$, and for all $ A \subseteq \Nm$, a local test is defined by $\varphi_A = \max\limits_{t\in [0,1]}\varphi_{A,t}$, where, for all $t\in [0,1]$ and all $n\in \NM$,
\[
f_n(t) = nt+\sqrt{n}\lambda_\alpha,
\]
and
\begin{equation}\label{eq:dkwphit}
    \varphi_{A,t} = \indev{i_A(t) > f_{\abs{A}} \paren{t}}.
\end{equation}

\leml{Considering the DKW local tests, $\mzsc$ has the following closed form: \begin{equation}\label{eq:DKWhatm0nonlocal}
    \mzsc = \min\limits_{i\in [
   0,1)}\floor[a]{\paren[a]{\frac{\lambda_\alpha}{2\paren{1-t}}+\sqrt{\frac{\lambda_\alpha^2}{4\paren{1-t}^2} +  \frac{m - i\paren{t}}{1-t}}}^2   }\wedge m.
\end{equation}}{DKWestimator}

This expression already appeared in \citet{durand2020post} and was used as a plug-in estimator for the adaptive top-$k$ procedure derived from DKW inequality in \citet{MBR2024}.

\rem{Applying \autoref{DKWestimator} and  \equaref{eq:DKWhatm0nonlocal} to a deterministic subset $R_k\subseteq\Nm$ and at level $\alpha/K$, instead of $\Nm$ and $\alpha$, exactly recovers the formula of $\zeta_k$ given in \equaref{eq:zeta_deterministic}.}

The next proposition proves that 
the adaptive top-$k$ DKW method described in \citet{MBR2024}
 is an exact shortcut for the inversion procedure.
 Furthermore, this confidence bound is computable in time $\mathcal{O}\paren[1]{m\log\paren{m}}$.

\propl{Considering DKW local test family, $\hat V^{\IP}_\varphi =\hat V^{\JER}_\Rfam$ with $\Rfam = \paren{R_k,\zeta_k}$ where for all $k\in \Nm$, $\zeta_k = k \wedge \left\lfloor  \mzsc p_{(k)}+ \sqrt{\mzsc }\lambda_\alpha\right\rfloor$ with $\mzsc$ defined by \equaref{eq:DKWhatm0nonlocal}, and $R_k = \set[1]{\sigma(i), 1\leq i \leq k}$ with $\sigma$ a (stochastic) permutation ordering $p$-values by increasing order.
}{DKWtopkopt}

We prove \autoref{DKWestimator} and \autoref{DKWtopkopt} in \appref{proof_DKWtopkopt}.

\section{Heterogeneous probabilistic inequalities and associated local test confidence envelopes}\label{sec_hetero_concentration}
In this section, we prove new probabilistic inequalities adapted to \autoref{ass_indep} and \autoref{ass_hetero}. \secref{sec_bretagnolle} revisits the so-called Bretagnolle inequality depicted in \citet{shorack2009empirical}, an analogue of DKW inequality relying on a known data heterogeneity, and updates it thanks to a recent work \citep{reeve2024short}. Then, \secref{sec_hetero_simes} introduces a new $k$-FWER inequality, using a FDR control method from \citet{dohler2018new} and extracts an $m_0$-adaptive version of this inequality. From all those new inequalities, we construct local tests. Note that a third approach using an inequality due to \citet{van1978properties} is deferred to \autoref{appendix_VZ} because of its lack of practical power.

\subsection{Bretagnolle inequality and applications}\label{sec_bretagnolle}

The first inequality that we present is a slight refinement of an inequality found in \citet{shorack2009empirical} and that is attributed  to Bretagnolle. It can be thought as a heterogeneous analogue of DKW inequality (\autoref{DKWin}).

\propl[Bretagnolle inequality]{
Let $X_1, \dotsc, X_N$ be $N$ independent random variables, with respective cdf denoted by $J_1, \dotsc, J_N$. Assume that there exist some functions $K_1, \dotsc, K_N$ such that $J_i(x)\leq K_i(x)$ for all $i\in\llbracket1,N\rrbracket, x\in\RR$, and let $\bar{K}:x\mapsto\frac1N\sum_{i=1}^N K_i(x)$. Then, for all $\lambda \geq 0$,

\begin{equation}\label{eq_bretagnolle2}
\Pro{\sqrt{N}\sup\limits_{t\in\mathbb{R}}\paren{\frac1N\sum_{i=1}^N\indev{X_i\leq t}- \bar{K}(t)}> \lambda}\leq e \cdot \exp\left(-2\lambda^2\right).
\end{equation}
}{bretin}
We defer a proof based on the one from \citet{shorack2009empirical} updated by the results from \citet{reeve2024short} in \appref{proof_bretin}.

If the random variables $X_1, \dotsc, X_N$ are super-uniform and we use the cdf of the uniform distribution as $K_1, \dotsc, K_N$, then \eqref{eq_bretagnolle2} becomes \eqref{eq_dkwm}, that is DKW inequality, except that the right-hand side has a multiplicative factor of $e$, that we can think of as a penalty term due to the heterogeneity. In that sense, Bretagnolle inequality is indeed a heterogeneous analog of DKW inequality.

In the remainder of the section, we fix $\alpha \in (0,1]$, define $\tilde{\lambda}_\alpha =\sqrt{\frac{1 +\log\paren[a]{1/\alpha} }{2}}$, the random vector $\mathbb{G}$ and the non-random vector $\mathbf{F}$ under \autoref{ass_hetero}: \begin{align} 
   \forall t \in [0,1],& \hspace{2pt} \mathbb{G}(t) = \paren[1]{\indev[a]{p_i\leq t}- F_i(t)}_{1\leq i \leq m},\label{defiG}\\
   & \hspace{2pt} \mathbf{F}(t) =  \paren[1]{F_i(t)}_{1\leq i \leq m}.\label{defiF}
\end{align}
For $A\subseteq \Nm$, we will consider the local test deduced from Bretagnolle inequality (in the meaning of \autoref{def_local_test}), $\varphi_A = \max\limits_{t\in [0,1]} \varphi_{A, t}$, where for all $t \in [0,1]$,

\begin{align} 
\varphi_{A, t} =  \indev[a]{\sum\limits_{i\in A}\mathbb{G}_i(t)>\tilde{\lambda}_\alpha\sqrt{|A|}}.\label{eq_local_bretagnolle_1} 
\end{align}

\corl{
Assume \autoref{ass_indep} and \autoref{ass_hetero} hold. Then, for all $A\subset \Nm$, $\varphi_A$  is a local test at level $\alpha$.

}{cor_local_bretagnolle}

The proof of the previous corollary is given in \appref{proof_cor_local_bretagnolle}.

\reml{Note that $\tilde{\lambda}_\alpha = \lambda_{\tilde{\alpha}}$ with $\tilde{\alpha} = e^{-1}\alpha$. Thus when \autoref{ass_hetero} holds with for all $i\in \Nm$, $F_i= F$, the Bretagnolle inequality reduces to the DKW inequality at level $e^{-1}\alpha$.}{lambdabret}

Similarly to the local tests considered in the homogeneous setting, we get an expression of $\varphi_{S,n,t}$ \eqref{expressionmin} for the local test family aforementioned.

\leml{
For all $S\subseteq \Nm$, $n \in \NS$ and $i\in \Nm$,  
\[
\varphi_{S,n,t}=\min_{0\leq j\leq m-|S| }\indev{\sum\limits_{1\leq k\leq n}\paren[1]{\mathbb{G}(t)}_{(k:S)}+ \sum\limits_{1\leq k\leq j}\paren[1]{\mathbb{G}(t)}_{(k:S^c)}>\tilde{\lambda}_\alpha\sqrt{n+j}}.
\]
}{Bretmaxmin}
We defer the proof to \appref{proof_Bretmaxmin}.
This lemma allows us to compute 
a first confidence envelope thanks to \autoref{SChetero} in the following proposition. This envelope is a shortcut computable in polynomial time for
the inversion procedure applied to Bretagnolle local test. 

\propl{
Assume that \autoref{ass_indep} and \autoref{ass_hetero} hold. Then, 
\begin{equation}
\scalebox{0.8}{$\hat{V}^{\SCo}_\varphi(S) = \max\set{n \in \NS : \forall t \in [0,1],\exists j\leq m-|S|, \sum\limits_{1\leq k\leq n}\paren[1]{\mathbb{G}(t)}_{(k:S)}+ \sum\limits_{1\leq k\leq j}\paren[1]{\mathbb{G}(t)}_{(k:S^c)}\leq\tilde{\lambda}_\alpha\sqrt{n+j}}.$}
\end{equation}

In particular, when $S = \Nm$, with probability $1-\alpha$, 
\begin{equation}\label{bretm0}
\mzsc = \max\set{n \in \NM : \forall t \in [0,1],  \sum\limits_{1\leq j\leq n}\paren[1]{\mathbb{G}(t)}_{(j)}\leq\tilde{\lambda}_\alpha\sqrt{n}}.
\end{equation} 
}{BretSC1}

\equaref{bretm0} allows the user to approximate the upper bound on $m_0$ given by the inversion procedure applied to $\varphi_A$, which may not be computable in practice, by a statistic computable in polynomial time. Indeed applying \autoref{continuoustodiscrete} to compute \eqref{bretm0}, we first need for all $t\in \set[1]{p_i,i\in \Nm}$ to sort $\paren[1]{F_j\paren{t}}_{j\in R(t)}$ which is in time $\mathcal{O}\paren[1]{\log(m)m^2}$. Then
the following identity for $n > m-i(t) $:
\[\sum\limits_{1\leq j\leq n}\paren[1]{\mathbb{G}\paren{t}}_{(j)} = n - \paren[1]{ m-i(t) } - \sum\limits_{j \in R(t)^c}F_j\paren{t} - \sum\limits_{1\leq j\leq n - \paren{ m-i(t) }}\paren[1]{\mathbf{F}\paren{t}}_{\left[j : R(t)\right]}\] 
allows to compute the matrix $\paren{\sum\limits_{1\leq j\leq n}\paren[1]{\mathbb{G}\paren{p_i}}_{(j)} }_{(n,i)\in \Nm^2}$ in time  $\mathcal{O}\paren[1]{m^2}$, and finally we can find the maximum in \eqref{bretm0} in time $\mathcal{O}\paren[1]{m^2}$. Hence, $\mzsc$ is computable in time $\mathcal{O}\paren[1]{\log(m)m^2}$ for Bretagnolle local test family.

By the same type of reasoning, $\hat{V}^{\SCo}_\varphi(S)$ can be computed in time $\mathcal{O}\paren[1]{m^3}$.\\

Similar to previous adaptive procedures that plug in an over-estimator of $m_0$ (\textit{e.g.} \autoref{DKWtopkopt}), we are interested in using an over-estimator $\hat m_0$, such as the one defined by \eqref{bretm0}, in combination with Bretagnolle inequality. The following proposition states an adaptive version of the inequality by plug-in, suited to deriving such an adaptive procedure, which will be done in \secref{sec_adap_bretenv}.

\propl{Assume \autoref{ass_indep} and \autoref{ass_hetero}. Let $\hat{m}_0$ such that \[\set{\sup\limits_{ t \in [0,1]}\sum\limits_{k\in \cH_0} \mathbb{G}_k(t)\leq \sqrt{m_0} \tilde{\lambda}_\alpha} \subseteq\set{m_0\leq \hat{m}_0},\] then, with $\mathbf{F}$ defined by \eqref{defiF}, \[\Pro{\exists t \in [0,1],   i_{\cH_0}\paren{t} >  \sum_{1\le k \le \hat{m}_0 }\paren[1]{\mathbf{F}(t)}_{[k]} + \sqrt{\hat{m}_0}\tilde{\lambda}_\alpha} \leq \alpha.\]}{adaptivebretin}

The proof is given in \appref{proof_adaptivebretin}.

\subsection{Heterogeneous Simes-like inequality}\label{sec_hetero_simes}

In this section, we turn to a different type of concentration inequality, which we derive from
an existing step-up $\FDR$ control procedure designed for the heterogeneous setting \citep{dohler2018new}.
This is similar to
the way one can recover Simes inequality from the Benjamini-Hochberg procedure in the homogeneous setting.

To this end we define, for a subset of hypotheses $U\subseteq \Nm$ and a threshold family $\tau = \paren{\tau_k}_{1\leq k \leq m} $ (\textit{i.e.} a nondecreasing family of $m$ elements of $\RR_+$), the step-up procedure on $U$ with respect to $\tau$; $SU\paren[0]{\tau,U} = \set[1]{i\in U : p_i \leq \tau_{\hat{k}}}$, where $\hat{k} = \max\set{k \in\llbracket0,|U|\rrbracket: p_{(k:U)}\leq \tau_k}$, with, by convention, $p_{(0:U)}=0$ and $\tau_0=\tau_1$. We will consider, for $X \in \RR^d$ $\paren{d\in \nat}$, that $X_{[d+1]}=+\infty$. This last convention is merely considered to 
get aesthetic expressions in \autoref{DDRlocaltest}, similarly to the convention $\ell_{0:n} = -1$ in \secref{sec:recovSCgoemann}.

If \autoref{ass_hetero} holds, we denote, for all $(s,t) \in \paren{\RR_+}^2$  such that $F_i(s)<1$, \[\mathbf{H}(s,t) =  \paren{\frac{\cdf_i(t)}{1-\cdf_i(s)}}_{1\le i \le m}. \]

We introduce a condition involving the vector $\mathbf{H}$ on a threshold family.

\defn{We say that the threshold family $\tau$ satisfies \eqref{C1} at level $\alpha\in [0,1]$ with respect to $U\subseteq \Nm$ if $\cdf_i\paren{\tau_{|U|}}<1$ for all $i\in U$ and
\begin{equation} \label{C1} \tag{C1}
    \underset{1\leq k\leq |U|}{\mathrm{max}}\set{ \underset{\substack{A\subseteq U \\ |A| = |U| -k +1}}{\mathrm{max}}\set{
\frac{1}{k} \sum_{i \in A}\mathbf{H}_i(\tau_{|U|},\tau_k)
    }} \leq \alpha.
\end{equation}
}

The condition \eqref{C1} on the threshold family $\tau$ is relevant to get a control on the $\FDR$ at level $\alpha$ in addition to the the result of
\citet{dohler2018new} that we recall:

\thml[\citealp{dohler2018new}, Theorem 1]{Assume \autoref{ass_indep} and \autoref{ass_hetero}. 

Let $U\subseteq \Nm$ and $\alpha \in [0,1]$. Furthermore, assume that the two families $\paren{p_i}_{i\in U\cap \cH_0}$ and $\paren{p_i}_{i\in U\cap \cH_1}$ are independent.

Let $\tau$ be a threshold family satisfying \eqref{C1} at level $\alpha$ with respect to $U$. Then,
\[\FDR\paren[1]{SU\paren[0]{\tau, U}}\leq \alpha.\]
}{control_hetero_fdr}

The contribution of $\tau_{|U|}$ in \eqref{C1} is different from the contributions of the other thresholds. This asymmetry naturally results in letting $\tau_{|U|}$ parametrize a sequence of thresholds, like it is designed in the following example.

\exl{
Let $\alpha \in (0,1]$ and
\begin{equation}\label{eq_tau_lim}
\lambda \in \set[2]{t\in \cA: \forall i\in\Nm, F_i(t)<1, \text{ and } \paren[1]{\mathbf{H}(t,t)}_{[1]} \leq m \alpha },
\end{equation}
and, for all $k\in \llbracket1,m -1\rrbracket$, let \begin{equation} \label{eq_tau_AHSU}
\tau_k = \mathrm{max}\set[2]{t\in \cA, t\leq \lambda : \paren[1]{\mathbf{H}(\lambda,t)}_{[1]} + \dots + \paren[1]{\mathbf{H}(\lambda,t)}_{[m-k+1]}\leq k\alpha}.
\end{equation}
Then $\tau = \paren{\tau_1,\dots,\tau_{m-1},\lambda}$ satisfies \eqref{C1} at level $\alpha$ with respect to $\Nm$. \\
Thus $\FDR\paren[2]{SU\paren[1]{\tau, \Nm}}\leq \alpha$.
}{thresholds_ddr}

\rem{In comparison to 
\citet{dohler2018new}, we introduce an additional flexibility 
in the threshold family via the parameter 
$\lambda$.
We can recover the threshold
sequence of the [AHSU]
procedure of \citet{dohler2018new}
with the particular choice
$\lambda=\max\set[a]{t\in\cA:\sum_{i=1}^m 
\mathbf{H}_i(t, t) \leq m\alpha}$ in \autoref{thresholds_ddr}. 
The role of the parameter $\lambda$ is analogous to the parameter of the adaptive one-stage step-up procedure of \citet{MR2579914}.
}

An other fundamental example is the following.

\exl{
Let $\alpha \in (0,1]$, $U\subseteq \Nm$ and 
\begin{equation}\label{eq_lambda_U}
\lambda \in \set[2]{t\in \cA: \forall i\in U, F_i(t)<1, \text{ and } \paren[1]{\mathbf{H}(t,t)}_{[1:U]} \leq |U| \alpha },
\end{equation}
and, for all $k\in \llbracket1,|U| -1\rrbracket$, let 
\begin{equation} \label{eq_tau_U}
\tilde{\tau}_k = \mathrm{max}\set[2]{t\in \cA, t\leq \lambda : \paren[1]{\mathbf{H}(\lambda,t)}_{\brac{1:U}} + \dots + \paren[1]{\mathbf{H}(\lambda,t)}_{\brac{|U|-k+1:U} }\leq k\alpha}.
\end{equation}
Then $\tilde{\tau} = \paren{\tau_1,\dots,\tau_{|U|-1},\lambda, \bar{\cA}, \dots, \bar{\cA}}$ where $\bar{\cA} = \sup \cA$, satisfies \eqref{C1} at level $\alpha$ with respect to $U$.

}{thresholds_ddr2}
This last example will be particularly useful with $U = \cH_0$, in combination with the following lemma which states a practical monotonicity property of the condition \eqref{C1}.

\leml{Let $\tau$ be a threshold family from $\cA$, $V \subseteq \Nm$ and $\alpha\in (0,1]$. Assume that $\tau$ satisfies \eqref{C1} at level $\alpha$ with respect to $V$. \\Then for all $U\subseteq V$, for $\tilde{\tau}$ defined by \eqref{eq_tau_U} with $\lambda \in \set{\tau_{|U|},\tau_{|V|}}$, $\tau_k\leq \tilde{\tau}_k$  holds for all $k\in
\Nm$.\\
In particular, $\tau$ satisfies \eqref{C1} at level $\alpha$ with respect to all $U\subseteq V$.}{examplesbest}

We prove \autoref{examplesbest} in \appref{proof_examplesbest}.

In particular, this last lemma states with $U = V$ that \autoref{thresholds_ddr2} gives an optimal threshold family satisfying \eqref{C1} with respect to $V$, up to the choice of $\tau_{|V|}$. Furthermore, it states that a threshold family satisfying \eqref{C1} at level $\alpha$ with respect to $\Nm$ also satisfies \eqref{C1} at level $\alpha$ with respect to $\cH_0$, especially the thresholds defined in \autoref{thresholds_ddr}.

From the FDR control of a general step-up procedure with threshold family $\tau$ satisfying \eqref{C1}, we deduce an analogue of Simes inequality (\autoref{Simesin}), using $\tau$ instead of the family $\paren[a]{\frac{\alpha k}{m}}_{1 \leq k \leq m}$.

\thml[Heterogeneous Simes-like inequality]{ Let $U\subseteq \Nm$ such that $\mu \in H_U$.
Let $\tau$ be a threshold family satisfying \eqref{C1} at level $\alpha \in [0,1]$ with respect to $U$, and assume  \autoref{ass_indep} and \autoref{ass_hetero}.
Then, the following inequality holds: $$\mathbb{P}\paren{ \exists i \in \llbracket 1,|U|\rrbracket \text{, } p_{(i : U)} \leq \tau_i} \leq \alpha.$$
}{thm_hetero_simes}

\begin{proof}

\begin{equation*}
\begin{split}
\mathbb{P}\paren{ \exists i \in \llbracket1,|U|\rrbracket  \text{, } p_{(i : U)} \leq \tau_i} & = \FWER\paren[2]{SU\paren[1]{(\tau_k)_{1\leq k \leq m_0}, U }}\\
&= \FDR\paren[2]{SU\paren[1]{(\tau_k)_{1\leq k \leq m_0}, U }} \text{ since each null hypothesis in $U$ is true,}\\
&\leq \alpha,
\end{split}
\end{equation*}
where the  inequality comes from \autoref{control_hetero_fdr} applied to $U$. 

\end{proof}

Remark that, in opposition to \autoref{control_hetero_fdr}, this last theorem does not require any assumptions on $\cH_1$, therefore the assumptions on the null hypotheses are the same as the ones mandatory in \secref{sec_bretagnolle}.
\autoref{thm_hetero_simes}, in addition to \autoref{examplesbest}, entails the following corollary:

\corl{ Let $\tau$ be a threshold family satisfying \eqref{C1} at level $\alpha \in [0,1]$ with respect to $\Nm$, and assume  \autoref{ass_indep} and \autoref{ass_hetero}.
Then, the following inequality holds: $$\mathbb{P}\paren{ \exists i \in \llbracket 1,m_0\rrbracket \text{, } p_{\paren{i : \cH_0}} \leq \tau_i} \leq \alpha.$$}{corNmtoH0}

The first fundamental application of \autoref{thm_hetero_simes} is to
define local tests in the heterogeneous setting.

\corl[Simes-like heterogeneous local test]{Assume  \autoref{ass_indep} and \autoref{ass_hetero}. Let $\alpha\in (0,1]$ and \begin{equation}\label{lambdaDDR}
\lambda \in \set[2]{t\in \cA: \forall i\in\Nm, F_i(t)<1, \text{ and } \paren[1]{\mathbf{H}(t,t)}_{[1]} \leq \alpha }.
\end{equation}
Let $A\subseteq \Nm$ and define $k = |A|$. Then,
\begin{align}
\varphi_A &= \max\limits_{i \in \llbracket 1,k\rrbracket} \indev{\displaystyle\sum_{\ell = 1}^{ k-i+1} \paren[2]{\mathbf{H}\paren[1]{\lambda,p_{(i:A)}}}_{[\ell:A]} \leq i\alpha}\indev{p_{(i:A)}\leq \lambda} \label{localDDRorderstat}\\
&= \max_{t\in[0,\lambda]} \indev{\displaystyle\sum ^{k-i_A(t)+1}_{\ell=1}\paren[1]{\mathbf{H}\paren[0]{\lambda,t}}_{[\ell:A]} \leq i_A(t)\alpha}, \label{localDDRt}
\end{align}
where $i_A(t)$ is defined by \eqref{def_ia}, is a local test of level $\alpha$. }{DDRlocaltest}

The proof of the validity of these local tests is given in \appref{proof_DDRlocaltest}.

\rem{Here, our local tests are defined with a ``$\max\limits_{t\in[0,\lambda]}$'' instead of a ``$\max\limits_{t\in[0,1]}$''. All the propositions from \secref{sec_shortcuts} can easily be adapted to this case.}

This local test family, thanks to the mild assumption on $\lambda$ \eqref{lambdaDDR}, derives a computable expression of $\varphi_{S,n,t}$ \eqref{expressionmin} stated in the next lemma.

\leml{Assume  \autoref{ass_indep} and \autoref{ass_hetero}. Let $\alpha \in [0,1]$, $S\subseteq \Nm$, $n\in \NS$, $\lambda$ defined by \eqref{lambdaDDR}, $t\in [0,\lambda]$ and consider the local tests from \autoref{DDRlocaltest}. Define· $k_S(t) = \paren{ n - \abs{R(t)^c \cap S}}\vee 0$ and $S(t) = R(t)\cap S$, then, \[\varphi_{S,n,t} = \indev{\displaystyle\sum ^{m - i(t)   +1}_{k=1}\paren[2]{\mathbf{H}_{R(t)^c}\paren[a]{\lambda,t},\paren[1]{\mathbf{H}\paren[a]{\lambda,t}}_{[1:S(t)]}, \dots,\paren[1]{\mathbf{H}\paren[a]{\lambda,t}}_{\brac{k_S(t):S(t) }} }_{\brac{k}} \leq k_S(t)\alpha}.\]}{DDRminphi}

This previous lemma is proven in \appref{proof_DDRminphi}. It allows us to easily express the confidence envelopes achieved by the combination of the local tests described in \autoref{DDRlocaltest} and \autoref{SChetero}.

\propl{Assume  \autoref{ass_indep} and \autoref{ass_hetero}. Let $\alpha \in [0,1]$, $S\subseteq \Nm$, $n\in \NS$, $\lambda$ defined by \eqref{lambdaDDR} and consider the local tests from \autoref{DDRlocaltest}. Define for all $t\in [0,\lambda]$, $k_S(t) = \paren{ n - \abs{R(t)^c \cap S}}\vee 0$ and $S(t) = R(t)\cap S$, then:
\begin{equation}
\scalebox{0.73}{$ \hat{V}^{\SCo}_\varphi(S) =\max\set{n \in \NS : \forall t\in [0,\lambda], \displaystyle\sum ^{m - i(t)   +1}_{k=1}\paren[2]{\mathbf{H}_{R(t)^c}\paren[a]{\lambda,t},\paren[1]{\mathbf{H}\paren[a]{\lambda,t}}_{[1:S(t)]}, \dots,\paren[1]{\mathbf{H}\paren[a]{\lambda,t}}_{\brac{k_S(t):S(t) }} }_{\brac{k}} >k_S(t)\alpha}
$.}
\end{equation}
In particular, when $S = \Nm$, with probability $1-\alpha$, 
\begin{equation}\label{DDRm0}
\scalebox{0.77}{$\mzsc = \max\set{n \in \NM : \forall t\in [0,\lambda], \displaystyle\sum ^{m - i(t)   +1}_{k=1}\paren[2]{\mathbf{H}_{R(t)^c}\paren[a]{\lambda,t},\paren[1]{\mathbf{H}\paren[a]{\lambda,t}}_{[1:R(t)]}, \dots,\paren[1]{\mathbf{H}\paren[a]{\lambda,t}}_{\brac{k(t):R(t) }} }_{\brac{k}} > k(t)\alpha}$.}
\end{equation} 
}{DDRSC1}

Note that the computation of $\mzsc$ is in time $\mathcal{O}\paren[1]{\log(m)m^2}$ similarly to the framework of Bretagnolle's local test family.

The second fundamental application of \autoref{thm_hetero_simes} 
is to design adaptive, data-dependent threshold families. The idea is that we can replace theoretical thresholds 
satisfying condition \eqref{C1} depending on the (unknown)
$\cH_0$ by data-dependent thresholds that are smaller than
the theoretical ones on a suitable event.

\leml[Data-dependent threshold family]{Let $\alpha \in [0,1]$ and $\tilde{\tau}$ be a threshold family satisfying \eqref{C1}  at level $\alpha$ with respect to $\cH_0$. Suppose  \autoref{ass_indep} and \autoref{ass_hetero} hold. Then, for any (possibly data-dependent) threshold family $\tau$ such that  $\tau_k \leq \tilde{\tau}_k$ for all $k\in \Nmz$ whenever the event $\set{\forall i \in \cH_0,  p_{(i : \cH_0)} > \tilde{\tau}_i }$ is realized, the following inequality holds: \[\mathbb{P}\paren{ \exists i \in \Nmz \text{, } p_{(i : \cH_0)} \leq \tau_i} \leq \alpha.\]}{adap_hetero_simes}

\begin{proof}Define the events $N=\set{\forall k \in \Nmz, \tau_k \leq \tilde{\tau}_k}$, $\tilde{B} = \set{\forall i \in \cH_0,  p_{(i : \cH_0)} > \tilde{\tau}_i }$ and
$B = \set{\forall i \in \cH_0,  p_{(i : \cH_0)} > {\tau}_i }$. The assumption on $\tau$ translates to $\tilde{B} \subseteq N$,
hence $N \cap \tilde{B} = \tilde B$.
Furthermore $N \cap \tilde{B} \subseteq B$. 
Therefore $\tilde{B} \subseteq B$, and $\mathbb{P}\paren[1]{\tilde B}\geq 1-\alpha$ by \autoref{thm_hetero_simes} with $U = \cH_0$, thus $\mathbb{P}\paren{ B }\geq 1-\alpha$.
\end{proof}
\autoref{adap_hetero_simes} will be useful to define a $m_0$-adaptive form of the first threshold family from \autoref{thresholds_ddr} in the following example:
\propl{Let $\alpha \in (0,1]$, $\tilde{\tau}$ be defined by \autoref{thresholds_ddr2} with $U = \cH_0$ and $\hat{m}_0$ be a statistic such that \[\set{\forall i \in \Nmz \text{, } p_{(i : \cH_0)} > \tilde{\tau}_i}\subseteq \set{m_0 \leq \hat{m}_0}.\] 
Furthermore, assume  \autoref{ass_indep} and \autoref{ass_hetero}. Define $\tau_{\hat{m}_0} = \tilde{\tau}_{m_0} $ and \begin{equation} \label{eqadaptau}
\scalebox{0.9}{$\forall k \in \llbracket  1,\hat{m}_0-1 \rrbracket, \tau_k = \mathrm{max}\set[2]{t\in \cA, t\leq \tilde{\tau}_{\hat{m}_0} : \paren[1]{\mathbf{H}(\tau_{\hat{m}_0},t)}_{[1]} + \dots + \paren[1]{\mathbf{H}(\tau_{\hat{m}_0},t)}_{[\hat{m}_0 -k+1] }\leq k\alpha}.$}
\end{equation} Then $\tau = \paren{\tau_1,\dots, \tau_{\hat{m}_0},\bar{\cA},\dots,\bar{\cA}}$ satisfies the inequality \[\mathbb{P}\paren{ \exists i \in \Nmz \text{, } p_{(i : \cH_0)} \leq \tau_i} \leq \alpha.\] }{adap_thresholds}
\begin{proof}
We know that under the event  $\set{\forall i \in \Nmz,  p_{(i : \cH_0)} > \tilde{\tau}_i }$, the event  $\set{m_0 \leq \hat{m}_0}$ occurs, thus, for all $U \supseteq \cH_0$ such that $|U| = \hat{m}_0$, $\tau$ satisfies condition \eqref{C1} at level $\alpha$ with respect to $U$ by \equaref{eqadaptau}. Therefore, by \autoref{examplesbest}, $ \tau_k \leq \tilde{\tau}_k$, hence \autoref{adap_hetero_simes} indeed applies.
\end{proof}

\autoref{adap_thresholds} allows us to derive adaptive procedures in \secref{adapsimes} using two different upper bounds on $m_0$.

\rem{In the statement of \autoref{adap_thresholds}, $\tilde{\tau}$ can be considered as an ``oracle'' threshold family for the parameter $\lambda = \tilde{\tau}_{m_0}$, because it relies on the knowledge of $\cH_0$. However, defining $\lambda$ by \eqref{eq_tau_AHSU} with $k = m_0$ or by \eqref{lambdaDDR} implies in particular that $\lambda$ verifies \eqref{eq_lambda_U} with $U = \cH_0$, thus the knowledge of $\cH_0$ is not mandatory to define adaptive thresholds.}

\section{Heterogeneous confidence envelopes using reference families}\label{sec_hetero_JER}
This section formally introduces our new heterogeneous confidence envelopes derived from the heterogeneous inequalities depicted in \secref{sec_hetero_concentration}, following the methodology introduced in \secref{sec_example_shortcut}.

Note that these inequalities have already been used to build confidence envelopes in \secref{sec_hetero_concentration} thanks to the local test formalism --- derived from inexact shortcuts for the inversion procedure. Nevertheless, these confidence envelopes have a complexity in $\mathcal{O}\paren{m^3}$ in general, where the one we will exhibit here are in time $\mathcal{O}\paren[1]{m^2\log\paren{m}}$.

Finally, recall that for a nested reference family $\Rfam = (R_k,\zeta_k)_{1 \leq k \leq m}$, $ \hat{V}^{\JER}_\Rfam $ is defined by: 
\[ \forall S \subseteq \Nm, \hat{V}^{\JER}_\Rfam(S) = \min_{k\in \Nm} \paren[a]{\zeta_k + \abs{S \setminus R_k}}\wedge|S|.\]

\subsection{Top-\texorpdfstring{$k$}{k} setting}
This section will introduce our new bounds relying on the Bretagnolle inequality from \secref{sec_bretagnolle}. We will first introduce the non-adaptive bound which is a consequence of our new adaptive bound on \secref{sec_adap_bretenv}.
Recall that, in the top-$k$ setting, $R_k = \set[1]{\sigma(i), 1\leq i \leq k}$ with $\sigma$ a (stochastic) permutation ordering $p$-values by increasing order.

\subsubsection{Non adaptive bound}\label{sec_nadap_bretenv}
The first bound that we introduce here is the non adaptive one.
\corl{
Let $\alpha\in ]0,1[$. Assume \autoref{ass_indep} and \autoref{ass_hetero}. Define $\tilde{\lambda}_\alpha=\sqrt{\frac{1+\mathrm{log}(1/\alpha)}{2}}$ and $\Rfam = (R_k,\zeta_k)_{1 \leq k \leq m}$ where for all $ k\in \Nm,$
\[ \zeta_k=k \wedge \floor[2]{m \bar{F}\paren[1]{p_{(k)}}+ \sqrt{m}\tilde{\lambda}_\alpha}.\] 
Then, $ \hat{V}^{\JER}_\Rfam $ is a confidence bound of level $\alpha$.
}{nonadaptivetopk}
\begin{proof}
It is a direct application of \autoref{adaptivebretin} with $\hat{m}_0  = m$ knowing that $\abs{\cH_0\cap R_k}\leq i_{\cH_0}\paren{p_{(k)}}$ for all $k\in \Nm$.
\end{proof}

Note that the previous confidence bound is computed in time $\mathcal{O}\paren[1]{m\log(m)}$.

Compared to the DKW calibration, the Bretagnolle calibration does not use an upper bound $F$ on the $F_i$ but the mean $\bar{F}$ of the individual $F_i$, which is always smaller than $F$, and $\tilde{\lambda}_\alpha=\sqrt{\frac{1+\mathrm{log}(1/\alpha)}{2}}$ which is always greater than the constant $\sqrt{\frac{\mathrm{log}(1/\alpha)}{2}}$ used in the case of the DKW calibration. Note that this slight change on $\tilde{\lambda}_\alpha$ has consequences as it is discussed in \secref{sec_num}.\\
This envelope has the advantage, like all other non adaptive envelope, to upper bound $\abs{R(t) \cap \cH_0}$ deterministically, and thus gives the experimenter the opportunity to decide which bound to use according to the domain of $p$-values on which they want the better envelope (without looking at the $p$-values).

\subsubsection{Adaptive bound}\label{sec_adap_bretenv}
The following bounds is the adaptive version of the Bretagnolle calibration. 
\thml{
Let $\alpha\in (0,1]$. Assume \autoref{ass_indep} and \autoref{ass_hetero}. Define $\mzsc$ by \eqref{bretm0}, $\tilde{\lambda}_\alpha=\sqrt{\frac{1+\mathrm{log}(1/\alpha)}{2}}$ and $\Rfam = (R_k,\zeta_k)_{1 \leq k \leq m}$ where for all $ k\in \Nm,$
\begin{equation}\label{zetabretadaptive}
  \zeta_k=k \wedge \mzsc \wedge \left\lfloor \sum _{1 \le i \le \mzsc}\paren[2]{\mathbf{F}\paren[1]{p_{(k)}}}_{[i]}+ \sqrt{\mzsc }\tilde{\lambda}_\alpha\right\rfloor.
\end{equation}
Then, $ \hat{V}^{\JER}_\Rfam $ is a confidence bound of level $\alpha$.
}{adaptivetopk}

\begin{proof}
By \autoref{adaptivebretin}, $\Pro{\exists t \in [0,1],   i_{\cH_0}\paren{t} >  \sum_{1\le k \le \mzsc}\paren[1]{\mathbf{F}(t)}_{[k]} + \sqrt{\mzsc}\tilde{\lambda}_\alpha} \leq \alpha $ because $\set{\sup\limits_{ t \in [0,1]}\sum\limits_{k\in \cH_0} \mathbb{G}_k(t)\leq \sqrt{m_0} \tilde{\lambda}_\alpha} = \set{\varphi_{\cH_0} = 0} \subseteq \set{m_0\leq \mzsc}$ with $\varphi_{\cH_0}$ defined by \eqref{eq_local_bretagnolle_1}.
The conclusion follows from the inequality $\abs{\cH_0\cap R_k}\leq i_{\cH_0}\paren{p_{(k)}}$ for all $k\in \Nm$.
\end{proof}
Note that the previous confidence bound is computed in time $\mathcal{O}\paren[1]{m^2\log(m)}$, because the family $\paren{\zeta_k}_{k\in \Nm}$ is computed in time $\mathcal{O}\paren[1]{m^2\log(m)}$. Then, the effective computation of $\hat{V}^{\JER}_\Rfam(S)$ is in time $\mathcal{O}\paren{\abs{S}}$, which is convenient for exploratory purposes.

This adaptive top-$k$ procedure reduces to the one derived from the DKW inequality described in \autoref{DKWtopkopt} when $F_k = F_1$ for all $k\in \Nm$ with $\tilde{\lambda}_\alpha$ instead of $\lambda_\alpha$. Thus, when no heterogeneity occurs in the data, the adaptive top-$k$ confidence bound, $\hat{V}^{\JER}_{\Rfam}$ with $\Rfam$ defined  in \autoref{adaptivetopk} is equal to the bound $\hat{V}^{\IP}_\varphi$. Hence, under this assumption, $\hat{V}^{\JER}_{\Rfam}\leq \hat{V}^{\SCo}_{\varphi}$. Nevertheless, under heterogeneity, the inequality $\hat{V}^{\SCo}_{\varphi}(S) <\hat{V}^{\JER}_{\Rfam}(S) $ can occur, as it is illustrated by the next example.

\exl{Let $\alpha > \exp\paren{-\frac{5}{3}}$ and $\frac{2 - \tilde{\lambda}_\alpha\sqrt{3}}{2}>\varepsilon>  0$. Define $a_1 = \frac{2 - \tilde{\lambda}_\alpha\sqrt{3}}{2}-\varepsilon_1$ and $a_2 = \frac{2 - \tilde{\lambda}_\alpha\sqrt{3}}{2}$. Assume that $p_1 = a_1$, $p_2=a_2$, $p_3 = a_2$ and $p_4 = 1$ are such that under their respective null hypotheses, \begin{itemize}
\item $\mu_{p_1} = a_1\delta_{a_1} + \paren{1-a_1}\delta_{1}$,
\item $\mu_{p_2} = a_2\delta_{a_2} + \paren{1-a_2}\delta_{1}= \mu_{p_3}$,
\item $\mu_{p_4} = \delta_{1}$.
\end{itemize}
Then, $\hat{V}^{\SCo}_{\varphi}\paren[1]{\set{1,2,4}} <\hat{V}^{\JER}_{\Rfam}\paren[1]{\set{1,2,4}} $ with $\Rfam$ defined  in \autoref{adaptivetopk}.}{SC1betterJER}

In \appref{proof_SC1betterJER}, we show the statement of \autoref{SC1betterJER}.
\rem{Since the validity of $\hat{V}^{\SCo}_{\varphi}$ and $\hat{V}^{\JER}_{\Rfam}$ 
rely on the same event that holds with high probability·, we define a confidence bound
with the following:
$\hat{V} = \hat{V}^{\SCo}_{\varphi}\wedge \hat{V}^{\JER}_{\Rfam}$. This confidence bound inherits
of the time complexity of the two bounds, but is valid and uniformly more powerful than each other. }

\propl{The adaptive top-$k$ procedure described in \autoref{adaptivetopk} is a uniform improvement of the inversion procedure applied to the homogenized Bretagnolle local test family: \[ \forall A \subseteq \Nm, \varphi_A^{hom}=\max\limits_{t\in \RR} \indev{i_A(t)> f_{|A|}(t)}, \] where, for all $t\in \RR$, and $n\in \Nm$, $f_{n}(t) = \sum\limits_{1 \leq i \leq n} \paren[1]{\mathbf{F}\paren{t}}_{[i]} + \tilde{\lambda}_\alpha\sqrt{n}.$}{jerbetterhom}

The proof of \autoref{jerbetterhom} is deferred to \appref{proof_jerbetterhom}

\subsection{Simultaneous \texorpdfstring{$k$}{k}-FWER} \label{kfwerbounds}

In this section, recall that the $k$-FWER setting assume to have a Simes-type inequality, that is, equivalently, some  family of local test of level $\alpha$ expressed as in \eqref{locform}. Then, the appropriate confidence envelope can be obtained as $\hat{V}^{\JER}_{\Rfam}$, where $\Rfam = \paren[1]{\set{i \in \Nm : p_i \leq \ell_{k:n}}, k-1}_{1\leq k\leq m}$ with $n$ possibly data dependant, such that $\Pro{\exists k \in \llbracket 1 , m_0 \rrbracket : p_{\paren{k:\cH_0}}\leq \ell_{k:n}}\leq \alpha$.
\subsubsection{Non adaptive envelope}
Applying \autoref{corNmtoH0} and \autoref{thresholds_ddr}, we easily derive a confidence envelope: 
\thm{Let $\alpha \in [0,1]$ and assume \autoref{ass_indep} and \autoref{ass_hetero}. Define a threshold family $\tau$  by \eqref{eq_tau_AHSU} and $\Rfam = \paren[1]{\set{i \in \Nm : p_i \leq \tau_k}, k-1}_{1\leq k\leq m}$.\\ Then, $\hat{V}^{\JER}_\Rfam$ is a confidence bound of level $\alpha$.}
This confidence envelope gives a convenient over-estimator of $m_0$ to get an adaptive confidence envelope. Indeed, \begin{equation}\label{m0intersimes}
\hat{m}_0^{\JER} := \hat{V}^{\JER}_\Rfam\paren[1]{\Nm} = \min\limits_{1\leq k\leq m}\set[2]{\abs[1]{\set{i\in \Nm : p_i> \tau_k}}+k-1}
\end{equation}
verifies the assumption $\set{\forall i \in \cH_0 \text{, } p_{(i : \cH_0)} > \tilde{\tau}_i}\subseteq \set{m_0 \leq \hat{m}_0}$ from \autoref{adap_thresholds}, because $\set{\forall i \in \cH_0 \text{, } p_{(i : \cH_0)} > \tilde{\tau}_i}\subseteq\set{\forall i \in \cH_0 \text{, } p_{(i : \cH_0)} > \tau_i}$. Thus it will allow us to define a family of adaptive thresholds thanks to \autoref{adap_thresholds}.
\subsubsection{Adaptive envelopes} \label{adapsimes}
Recall here, for $\paren{s,t}\in \RR^2$ such that for all $i\in \Nm$, $F_i(s) <1$, $\mathbf{H}\paren{s,t} = \paren{\frac{F_i(t)}{1- F_i(s)}}_{i\in \Nm}$.
We provide here two adaptive envelopes derived from \secref{sec_hetero_simes}. The first one will be computed thanks to the expression of $\hat{m}_0^{\JER}$ defined by \eqref{m0intersimes}: 
\thml{Let $\alpha \in [0,1]$ and assume \autoref{ass_indep} and \autoref{ass_hetero}. Define a threshold family $\tau^{(1)}$  by  \eqref{eqadaptau}, with $\hat{m}_0^{\JER}$ expressed by \eqref{m0intersimes}, $\lambda = \tau_{\hat{m}_0^{\JER}} $ and \[\Rfam = \paren[2]{\set{i \in \Nm : p_i \leq \tau_k^{(1)}}, k-1}_{1\leq k\leq \hat{m}_0^{\JER}+1}.\] 
Then, $\hat{V}^{\JER}_\Rfam$ is a confidence bound of level $\alpha$.}{adapenv1}

\begin{proof}
Define $\tilde{\tau}$ by \autoref{thresholds_ddr2} with $\lambda = \tau_{m_0}$ and $U= \cH_0$. Assume that the event $\set{\forall i \in \Nmz,  p_{(i : \cH_0)} > \tilde{\tau}_i }$ occurs. Then $m_0 \leq \hat{m}_0^{\JER}$ therefore $\tau_{\hat{m}_0^{\JER}} \geq \tau_{m_0}$, thus $\tau^{(1)}_k\leq \tilde{\tau}_k$ for all $k\leq \hat{m}_{0}^{\JER}$, and \autoref{adap_hetero_simes} concludes.
\end{proof}

Observe that 
we can
 re-enact this step of adaptivity to improve our bound again. Thus, a strategy would be to repeat this plug-in steps until $\hat{m}_0^{\JER}$ stops decreasing. This step-down strategy is classical in multiple testing literature and can be traced back to \citet{Holm1979}. It has also been used for confidence envelopes in \citet{blanchard2020post}.

The second envelope requires a more stringent assumption on $\lambda$.
\thml{Let $\alpha \in [0,1]$ and assume \autoref{ass_indep} and \autoref{ass_hetero}. Define a threshold family $\tau^{(2)}$  by \eqref{eqadaptau} with $\lambda$ verifying \eqref{lambdaDDR}, that is
\begin{equation*}
\lambda \in \set[2]{t\in \cA: \forall i\in\Nm, F_i(t)<1, \text{ and } \paren[1]{\mathbf{H}(t,t)}_{[1]} \leq \alpha },
\end{equation*} and $\mzsc$ defined by \eqref{DDRm0} 
and 
\begin{equation} \label{rfamhsimes}
\Rfam = \paren[2]{\set{i \in \Nm : p_i \leq \tau_k^{(2)}}, k-1}_{1\leq k\leq \mzsc+1}. 
\end{equation}

Then, $\hat{V}^{\JER}_\Rfam$ is a confidence bound of level $\alpha$.
}{adapenv2}

\begin{proof}
Apply \autoref{adap_thresholds} with $\hat{m}_0=\mzsc$.

\end{proof}
This assumption on $\lambda$ will generally be benign with respect to the thresholds range; indeed, in the case of the vanilla Simes inequality thresholds, the larger threshold is equal to $\alpha$,  and here, under the assumption that $F_i(t) = t$ for all $t\in \mathcal{A}_i$, $\lambda = \frac{\alpha}{1+\alpha}$ satisfies \eqref{lambdaDDR}.

\propl{Define for $n \leq m$ and $\lambda$ which verifies \eqref{lambdaDDR}, and define for all $i \leq n$, \[ \ell_{i:n} =  \mathrm{max}\set[2]{t\in \cA, t\leq \lambda : \paren[1]{\mathbf{H}(\lambda,t)}_{[1]} + \dots + \paren[1]{\mathbf{H}(\lambda,t)}_{[n -i+1] }\leq i\alpha}.\]
The family $\paren{\ell_{i:n}}$ yields a homogeneous Simes-like local test family at level $\alpha$: 
\begin{align*}
\forall A \subseteq \Nm, \varphi_A^{hom} &= \max_{t\in[0,\lambda]} \indev{\displaystyle\sum ^{|A|-i_A(t)+1}_{\ell=1}\paren[1]{\mathbf{H}\paren[0]{\lambda,t}}_{[\ell]} \leq i_A(t)\alpha}\\
& =\max_{1\leq i \leq |A|} \indev{p_{(i:A)}\leq \ell_{i:|A|}}.
\end{align*}
Then, $\hat{V}^{\JER}_\Rfam \leq \hat{V}^{\SCo}_{\varphi^{hom}}$ with $\Rfam$ defined by \eqref{rfamhsimes}.
 }{jerbettersimes}

The proof of the previous proposition is given in \appref{proof_jerbettersimes}.

\reml{The adaptive envelopes from \autoref{adapenv1} and \autoref{adapenv2} differ from the shortcut for the inversion procedure described by \autoref{SCgoemann} applied to $\varphi^{hom}$ by the over estimators of $m_0$ considered. The one provided by \autoref{SCgoemann} is \[\hat{m}_0^{hom} = \inf\limits_{t\in (0,\lambda]} \paren{\left\lceil\frac{1}{\alpha}\sum^{m-i(t)+1}_{k=1}\paren[1]{\mathbf{H}\paren[0]{\lambda,t}}_{[k]}\right\rceil + m-\paren{i(t)+1}} \wedge \paren[1]{m- i(0)},\] and is lower bounded by $\hat{m}_0^{\SCo}$ expressed by  \eqref{DDRm0}, hence the adaptive envelope from \autoref{adapenv2} is uniformly better than the one provided by the inversion procedure applied to the homogenized local tests.}{remhomsimes}

\subsection{Deterministic setting}
This section focuses on applying the Bretagnolle $m_0$ estimation on the framework from \citet{durand2020post}, that we recall here: assume that the family $\paren{R_k}_{k\in \cK}$ is deterministic, that is $\cK$ is deterministic and, for all $k\in \cK$, $R_k$ is deterministic. 

\thm{Let $\alpha \in (0,1]$. Assume \autoref{ass_hetero} and that for all $k\in \cK$, $R_k\cap \cH_0$ is a family of independent variables. For all $k\in \cK$, define: \[\zeta_k = \max\set{n \in \llbracket 0,\abs{R_k} \rrbracket : \forall i \in R_k,  \sum\limits_{1\leq j\leq n}\paren[1]{\mathbb{G}(p_i)}_{\paren{j:R_k}}\leq \tilde{\lambda }_{\frac{\alpha}{|\cK|}}\sqrt{n}} .\] 
Then, the reference family $\Rfam = \paren{R_k,\zeta_k}_{k\in \cK}$ has its $\JER$ bounded by $\alpha$.
}

\begin{proof}
For all $k\in \cK$, by \autoref{BretSC1} applied with $R_k$ instead of $\Nm$ and $\frac{\alpha}{|\cK|}$ instead of $\alpha$, $\Pro[1]{\abs{R_k\cap\cH_0} > \zeta_k}\leq \frac{\alpha}{|\cK|}.$ It follows, by union bound, that \[\JER(\Rfam) = \Pro[1]{\exists k \in \cK : \abs{R_k\cap\cH_0} > \zeta_k}\leq |\cK| \frac{\alpha}{|\cK|} = \alpha.\]
\end{proof}
\section{Numerical experiments}\label{sec_num}
The purpose of this section is to empirically visualize the gain of power on confidence envelopes by considering the heterogeneity of the data. We will compare our new procedures with the ones defined for the homogeneous setting, which we recall is the classical setting that was investigated in previous works.

\subsection{Simulation setting}
We follow the simulation setting from \citet{dohler2018new} to generate our data. We recall here this setting:
we simulate two groups of $N = 50$ subjects, and for each subject, we observe $m$ independent binary responses (\textit{e.g.} treatment effects, gene variants, ...). We denote by $\beta_{1i}$ and $\beta_{2i}$ the success probability, respectively for the first group and the second group. Then, we simultaneously test the null hypotheses $H_i : ``\beta_{1i}=\beta_{2i}"$ by the mean of two-sided Fisher's exact tests producing for each $i \in \Nm$ a $p$-value $p_i$ with its \textit{c.d.f.} $F_i$. We will then have three distinct sets of $p$-values. The first one has a cardinality of $m_0' < m$ and the paired $p$-values is generated by underlying data such that $\beta_{1i}=\beta_{2i} = 0.01$. We denote by $\pi_0' = m_0'/m_0$ the proportion of null hypotheses lying in this set of null $p$-values, and it will be set to $0.5$ for our experiments. The second set of $p$-values has a cardinality of $m_0''< m-m_0'$ and will be generated by underlying data such that $\beta_{1i}=\beta_{2i} = 0.1$. That is, $m_0=m_0'+m_0''$. Finally, the third set of $p$-values has a cardinality $m_1 = m - \paren{m_0' + m_0''}$, determined by the proportion of effects $1-\pi_0$, where $\pi_0=m_0/m$, and generated by underlying data such that $\beta_{1i}= 0.1$ and $\beta_{2i} = q$, where $q$ represents the strength of the signal. For all of our simulations, $\pi_0$ and $q$ will be specified and we will set $ m = 2000$, $\pi_0' =0.5$. 
Furthermore, for each couple $(\pi_0,q)$ we draw $m$ contingency tables, compute the associated $p$-values
and apply our methods to this unique set of $p$-values.

\subsection{Analyses for top-\texorpdfstring{$k$}{k} setting}
This section will focus on our results in the top-$k$ setting. We will compare our Bretagnolle bounds for heterogeneous data with the DKW bounds provided in the homogeneous case. Note that in the top-$k$ setting, $\zeta_k = \zeta\paren[0]{p_{(k)}}$ derives from a more general bound $\zeta(t)$ on $\abs{R(t) \cap \cH_0}$  which is deterministic when it derives from non-adaptive bounds. In this kind of simulation, an upper bound $\zeta\paren{\cdot}$ on the number of false discoveries performs better if it has small values. We recall here the expression of $\zeta\paren{\cdot}$ for the Bretagnolle and DKW inequalities \autoref{thm_families_topk_homogeneous}:
\[\zeta^{DKW}(t) = \floor{mt + \sqrt{m} \lambda_\alpha}, \zeta^{Bret}(t) = \floor{m\bar{F}(t) + \sqrt{m} \tilde{\lambda}_\alpha}, \]
and their adaptive counterparts:
\[\zeta^{DKW}_{adap}(t) = \floor{\hat{m}_{0,1} t + \sqrt{\hat{m}_{0,1}} \lambda_\alpha}, \zeta^{Bret}_{adap}(t) = \floor{ \sum _{1 \le i \le \hat{m}_{0,2}}\paren[1]{\mathbf{F}(t)}_{[i]} + \sqrt{\hat{m}_{0,2}} \tilde{\lambda}_\alpha}, \]
where $\hat{m}_{0,1}$ is defined by \eqref{eq:DKWhatm0nonlocal} and $\hat{m}_{0,2}$ is defined by \eqref{bretm0}.
\begin{figure}[ht]
\begin{center}
\includegraphics[scale=0.55]{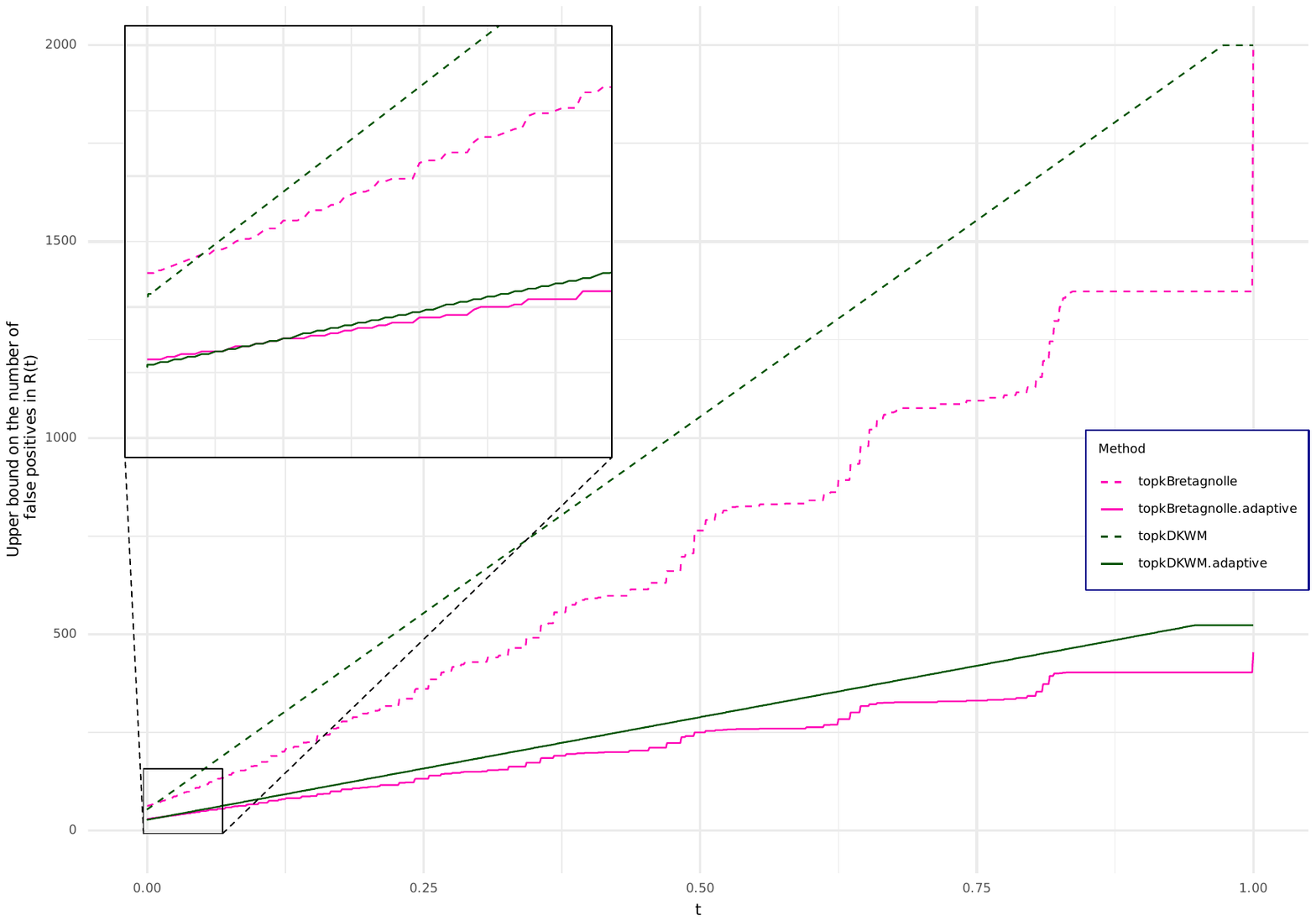}
\end{center}
\caption{Upper bounds $\zeta(\cdot)$ provided by the DKW and Bretagnolle inequalities, with their respective adaptive bounds. We set $\pi_0 = 0.2$ and $q = 0.4$. Lower is better.}
\label{envelopesbret}
\end{figure}

Here, we expect the DKW bounds to be slightly better than the Bretagnolle bounds on smaller $p$-values --- due to the difference between $\lambda_{\alpha}$ and $\tilde{\lambda}_{\alpha}$, see \autoref{lambdabret} --- if the number of $p$-values in the area near $0$, where $\zeta^{DKW}(\cdot)< \zeta^{Bret}(\cdot)$
, oversteps the $\sqrt{m}\lambda_{\alpha}$ or $\sqrt{\hat{m}_0}\lambda_{\alpha}$ term. In \autoref{envelopesbret}, we plot the four functions $\zeta^{DKW}, \zeta^{Bret}, \zeta^{DKW}_{adap}, \zeta^{Bret}_{adap}$, for one realization of $(\hat{m}_{0,1}, \hat{m}_{0,2})$. The lowest a function is at a given point $t$, the better the estimation of $\abs{R(t) \cap \cH_0}$. We see that, indeed, the functions based on the DKW inequality are better than their Bretagnolle counterparts for small values of $t$. We also remark that there is a huge gap between the adaptive and the vanilla methods, in favor of the adaptive ones, because the signal is strong and large (we used $\pi_0 = 0.2$ and $q = 0.4$), which is a favourable case for adaptive methods.

\begin{figure}[ht]
\begin{center}
\includegraphics[scale=0.55]{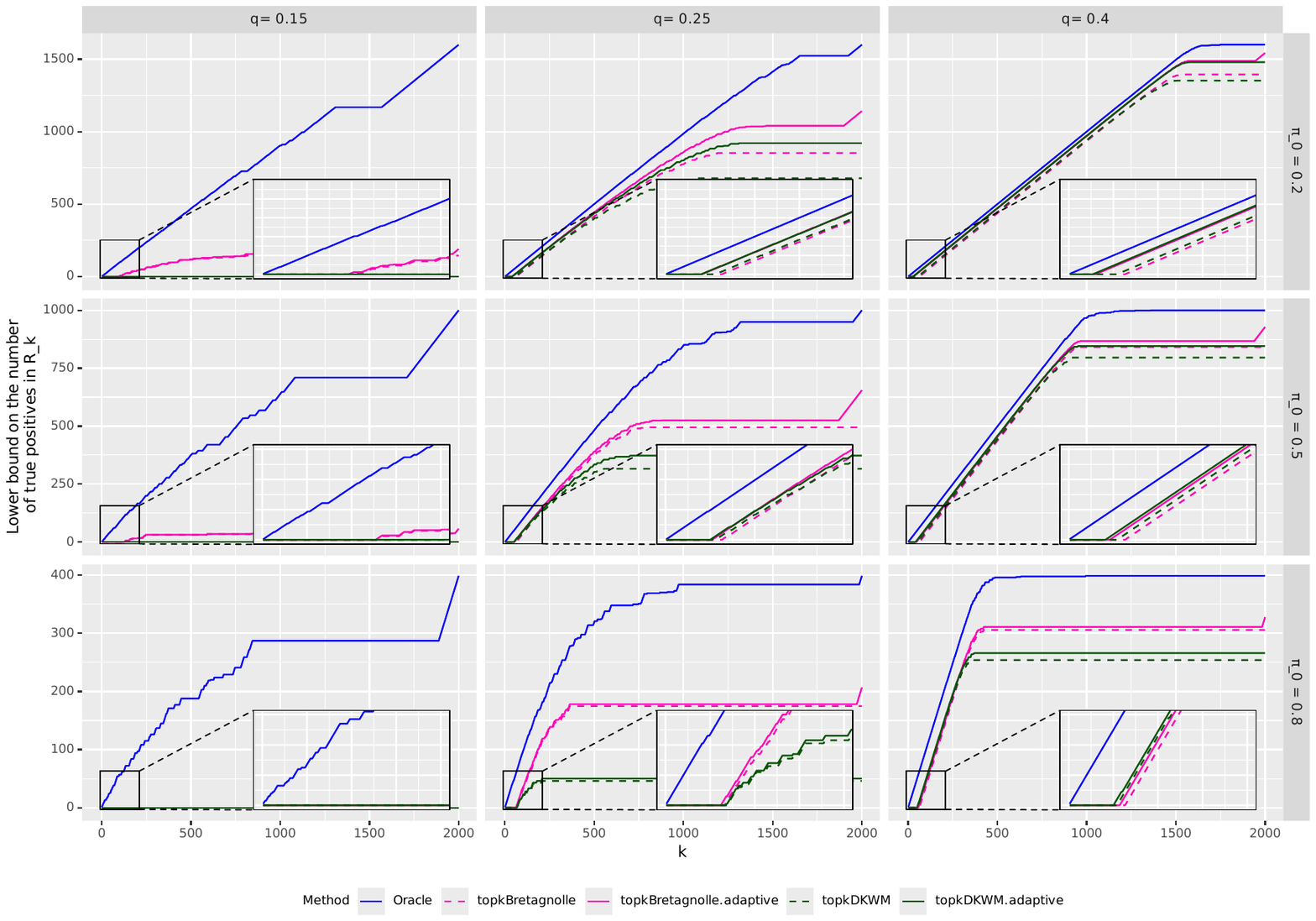}
\end{center}
\caption{Lower bounds on number of true positives in $R_k$ provided by DKW and Bretagnolle inequalities, with their respective adaptive bounds. Higher is better.}
\label{LBbret}
\end{figure}
\autoref{LBbret} illustrates the previous analysis of 
for different values of $q$ and $\pi_0$ by plugging the $p$-values to the bounds $\zeta\paren{\cdot}$.  We compare here the lower bound provided by the top-$k$ methods, derived from the sequence of inequalities $\abs{\cH_1 \cap S} = \abs{S}- \abs{\cH_0 \cap S} \geq \abs{S}- \hat{V}(S) $. Note that here, the Oracle curve is the number of elements from $R_k$ in $\cH_1$. As we expected, in settings with strong signal or with moderately strong and moderate to high signal proportion, the Bretagnolle bound is more conservative than the DKW bound for $R_k$ such that $p_{(k)}$
is small. This last phenomena is due to the corrective term from the Bretagnolle inequality to consider heterogeneity. However when the signal is weak, the Bretagnolle bound detects signal when the DKW inequality does not.

\subsection{Analyses for \texorpdfstring{$k$}{k}-FWER setting}

\begin{figure}[ht]
\begin{center}
\includegraphics[scale=0.55]{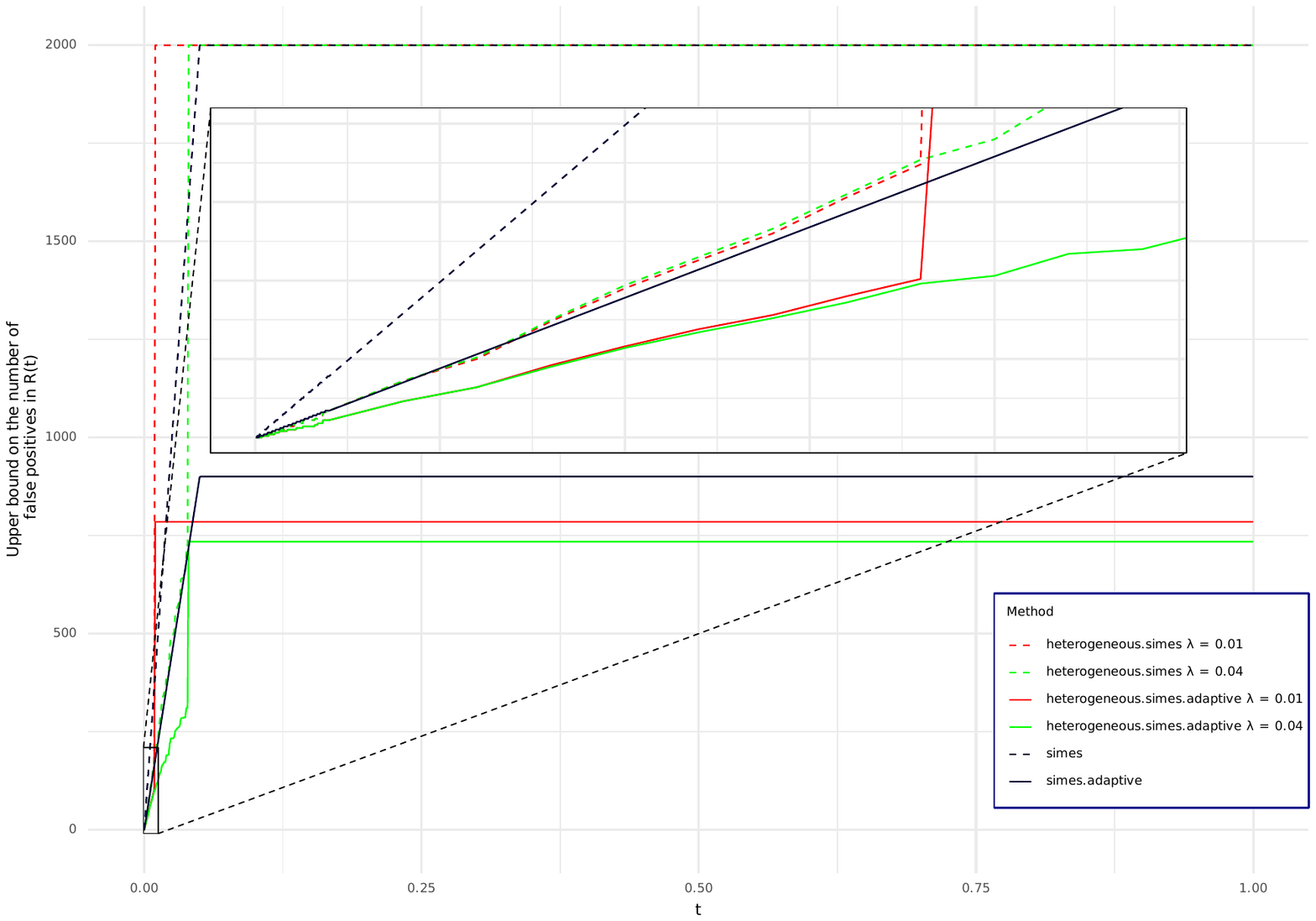}
\end{center}
\caption{Upper bounds $\zeta(\cdot)$ provided by Simes and heterogeneous Simes inequalities, with their respective adaptive bounds. We set $\pi_0 = 0.2$, $\pi_0' = 0.5$ and $q = 0.4$. Lower is better.}
\label{envelopessimes}
\end{figure}

Likewise to the top-$k$ case, one can construct an upper bound $\zeta(t)$ on $\abs{R(t) \cap \cH_0}$ in the $k$-FWER setting. To obtain this bound we define, for a thresholds family $\tau$ and all $t\in [0,1]$, $\zeta(t) = \abs{\set{i\in \Nm : \tau_i < t}}$. Note that it can be checked that this construction is exactly the same as the one displayed in \secref{sec_kFWER_to_topk}. We can observe two phenomena in \autoref{envelopessimes} ; when we reduce the parameter $\lambda$ in the heterogeneous Simes-like procedures, we get bounds slightly better in the non-adaptive fashion. This phenomenon was expected, since we enlarge 
the denominator of the coordinates of $\mathbf{H}\paren{\cdot,\cdot}$. However, it
deteriorates the bound in the adaptive fashion. 
It is a consequence of limiting the computation of $\hat{m}_0$ to $p$-values smaller than $\lambda$.

\begin{figure}[ht] \label{LBsimes}
\begin{center} 
\includegraphics[scale=0.55]{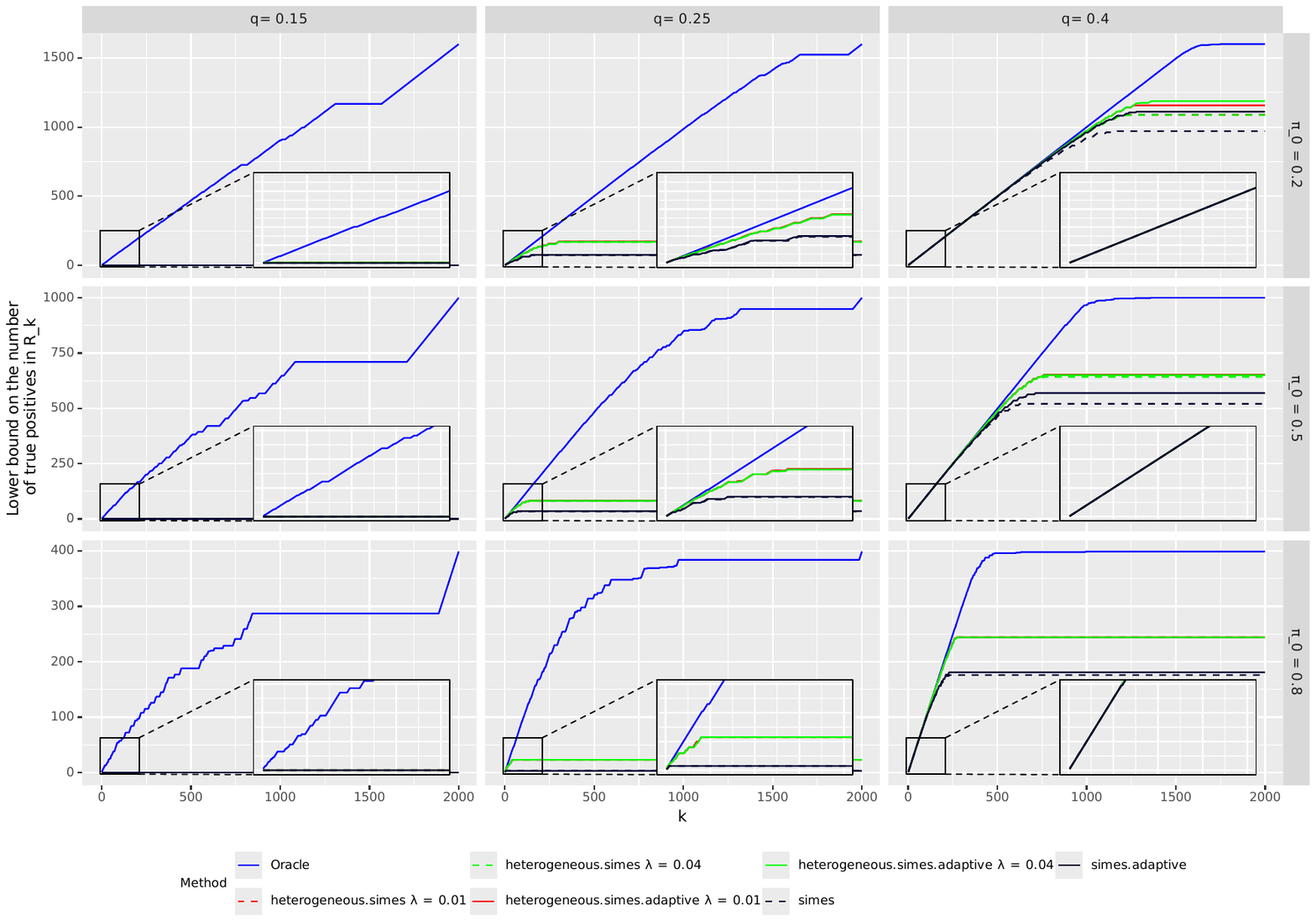}
\end{center}
\caption{Lower bounds on number of true positives in $R_k$ provided by Simes and heterogeneous Simes inequalities, with their respective adaptive bounds. Higher is better.}
\end{figure}

\autoref{LBsimes} shows efficiency of Simes-like confidence envelopes via lower bounds on the number of true positives in $R_k$. Likewise to the vanilla Simes method and accordingly to power studies from \citet{dohler2018new}, this class of $k$-FWER methods does not detect the signal if it is weak. However, with moderate and strong signal, the heterogeneous Simes methods will detect the signal, and is less conservative than the vanilla Simes method. The improvement using adaptive methods is substantial when the signal is strong and wide, and the slight difference between the adaptive methods mentioned earlier in the analysis of \autoref{envelopessimes} is visible for the couple of parameters $\paren{q,\pi_0} = \paren{0.4,0.2}$.
However, these improvements are slight or inexistent in the other settings hence these heterogeneous bounds inherits of the vanilla Simes inequality. 

\subsection{Comparison with other homogeneous bounds}
We compare our new envelopes for heterogeneous data with the bounds from \citet{MBR2024} and \citet{Katsevichramdas2020} provided in the homogeneous case. We only show here the adaptive versions of the bounds. As it is illustrated by \autoref{envelopesadap},
there is not an uniformly better upper bound over $[0,1]$. While the heterogeneous Simes-like bounds are the least conservative for the smallest $t$, this trend reverses as $t$ becomes less extreme --- but still inferior to approximatively $0.05$. For these values of $t$, the KR and Wellner confidence bounds are less conservative than the heterogeneous envelopes developed in the present article. However, for larger $t$, the less conservative bound is the Bretagnolle bound.

\begin{figure}[ht]\label{envelopesadap}
\begin{center}
\includegraphics[scale=0.55]{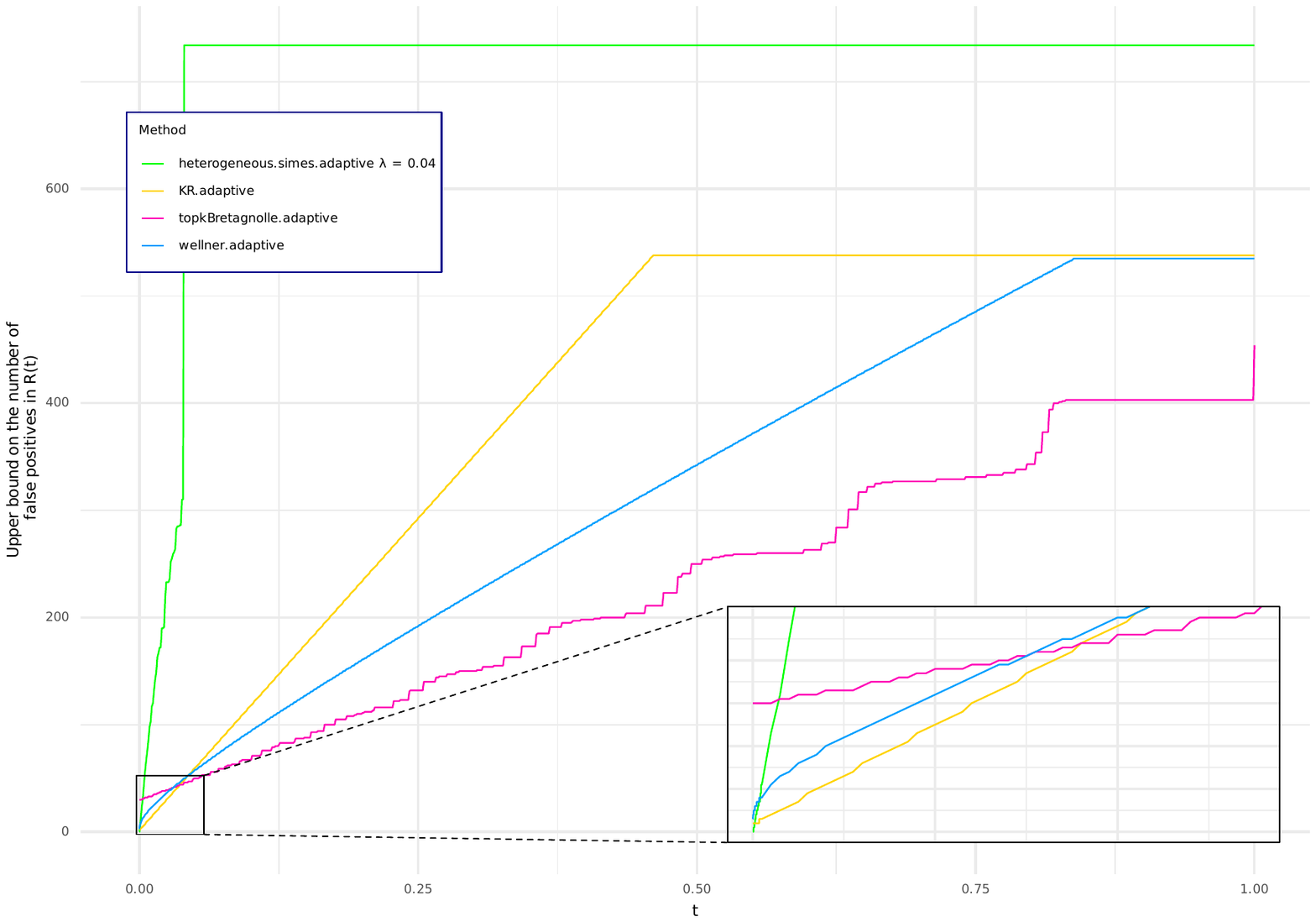}
\end{center}
\caption{Upper bounds $\zeta(\cdot)$ provided by adaptive bounds from \citet{MBR2024}, \citet{Katsevichramdas2020} and our adaptive heterogeneous bounds. We set $\pi_0 = 0.2$, $\pi_0' = 0.5$ and $q = 0.4$. Lower is better.}

\end{figure}
The previous analysis is corroborated by \autoref{LBadap} where we plugged the $p$-values in these adaptive bounds.

\begin{figure}[ht]\label{LBadap}
\begin{center}
\includegraphics[scale=0.55]{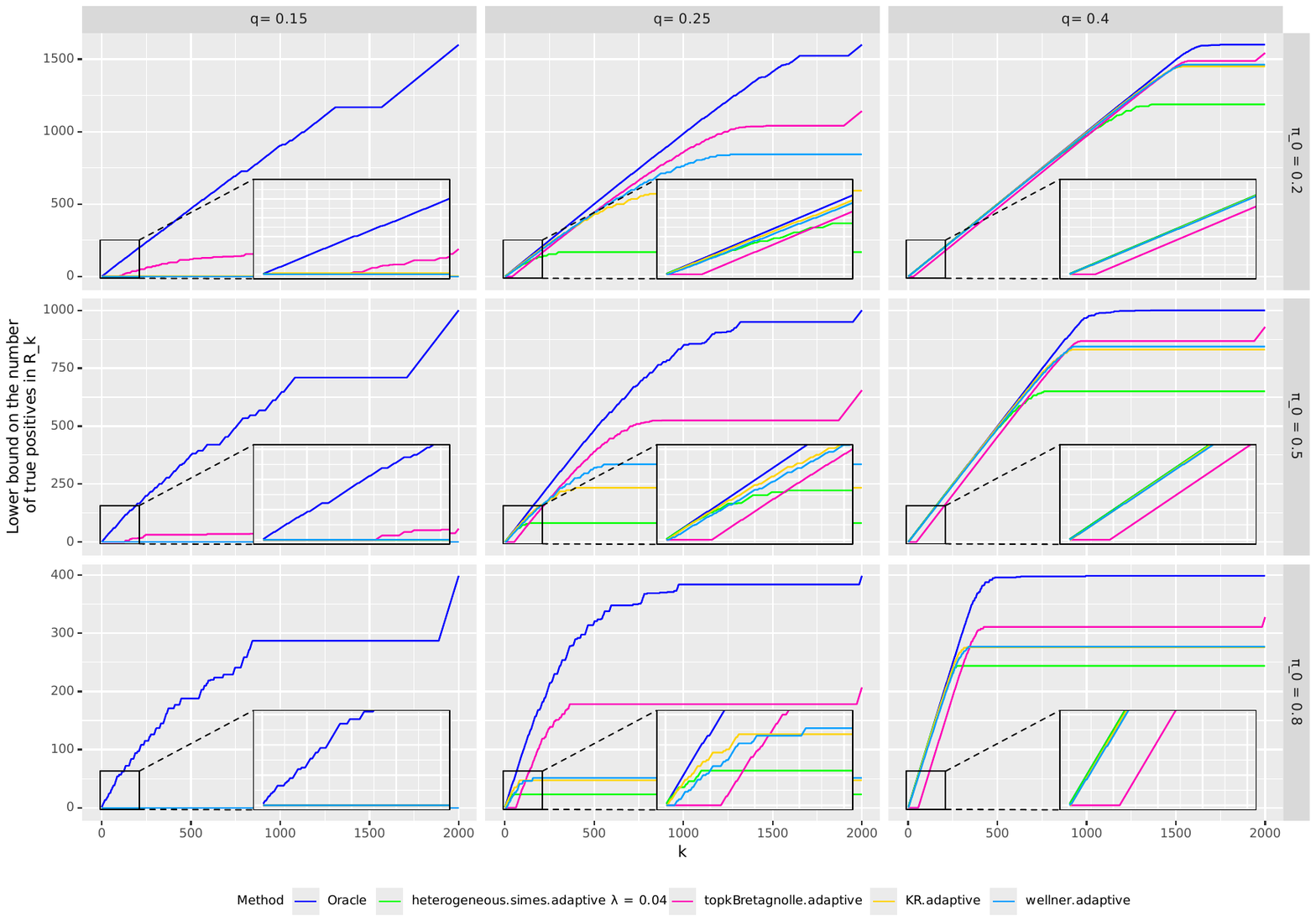}
\end{center}
\caption{Lower bounds on number of true positives in $R_k$ adaptive bounds from \citet{MBR2024}, \citet{Katsevichramdas2020} and our adaptive heterogeneous bounds. Higher is better.}

\end{figure}

\section{Conclusion and perspectives}\label{sec_ccl}

In this paper, we introduced new confidence envelopes for false discoveries by incorporating heterogeneous upper bounds on the cumulative distributive functions of null distribution. Our approach bridges key frameworks in the existing literature on confidence envelopes --- specifically the JER approach \citep{blanchard2020post} and the inversion procedure using
local tests \citep{Genovese2006,goeman2011multiple}.

We established generic novel shortcuts for the inversion procedure, providing significant new insights even in the standard homogeneous case. For instance, applied to the complete set of hypotheses,
the new shortcuts are exact (they coincide with the
theoretical inversion procedure) and thus provide
optimal (over-)estimators for $m_0$. Furthermore, 
using these over-estimators as plug-ins into the
JER framework yields a bound which coincides for all
rejection sets with
the inversion procedure in the homogeneous case, formally validating the optimality of several previous constructions from \citet{MBR2024}.

The methodologies introduced here extend naturally to the heterogeneous setting. 
By combination with heterogeneous ecdf inequalities 
(some of which are newly introduced in this work and
of independent interest), our novel shortcuts provide new confidence envelopes for heterogeneous tests and yield over-estimators for the number of null hypotheses.
These bounds are less conservative
than those obtained via a naive homogenization of the local test family. Consequently, when plugged in into the JER methodology, these estimators provide sharper, more efficient shortcuts.
Additionally, our new shortcuts for the inversion procedure also produce alternative confidence envelopes that can be less conservative than their JER counterparts in specific cases.

We validated our methods with numerical experiments highlighting the power gain of using envelopes tailored to the heterogeneity of the data compared to previous approaches designed for the homogeneous case.

A promising direction for future research involves deriving heterogeneous versions of the inequalities proposed by \citet{Katsevichramdas2020} or the uniform Wellner inequality from \citet{MBR2024}. Since these bounds outperform heterogeneous ones in specific $p$-value regions without requiring prior distributional information, getting heterogeneous versions could further enhance the power of these envelopes in heterogeneous context.

\section*{Acknowledgements}

This work has been supported by the French research grants ANR-19-CHIA-0021 (BISCOTTE), ANR-20-IDEES-0002 (PIA), ANR-21-CE23-0035 (ASCAI) and ANR-23-CE40-0018 (BACKUP).

\printbibliography[] 

\appendix

\section{Van Zuijlen inequality and confidence envelope}\label{appendix_VZ}
In this appendix, we focus on using the concentration inequality provided by \citet{shorack2009empirical} as a heterogeneous form of \autoref{prop_concen_topk_hmogeneous}. Unfortunately, this inequality turned out to not be sharp enough in our simulations, as it is outperformed by the heterogeneous Simes method..\\
Let us define the function $$h : x \mapsto xe^{x},$$ denote $W_{0}$ its inverse on $[-1, +\infty)$ (called principal real branch of the Lambert function) and $W_{-1}$ its inverse on $(-\infty, -1]$.

\begin{Theorem}[Van Zuijlen inequality]
Let $X_1, \dots, X_n$ be $n$ independent variables. Denote by $F_i$ their respective cdf and $\bar{F}$ the mean of these cdf. Then, for $\delta \in (0,1)$, with probability at least $1-\delta$, $$\forall t \in \mathbb{R}\text{, } \frac{1}{n} \sum _{1\leq i\leq n}\mathds{1}_{X_i \leq t} <  \frac{\bar{F}(t)}{-W_{0}\left(-\frac{\delta}{e(1+\delta)}\right)}.$$
\end{Theorem}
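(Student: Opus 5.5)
The plan is to derive the statement from a tail bound on the supremum of the heterogeneous empirical-to-average ratio $\sup_t \wh{F}_n(t)/\bar F(t)$, where $\wh{F}_n(t)=\frac1n\sum_{i}\indev{X_i\leq t}$. This bound is due to \citet{van1978properties} and is recorded in \citet{shorack2009empirical}. The Lambert-function constant should then come purely from inverting that bound. Concretely, I would aim for a bound of the form
\[
\Pro{\exists t\in\RR,\ \wh{F}_n(t)\geq \lambda \bar F(t)} \leq g(\lambda), \qquad \lambda>1,
\]
with $g(\lambda)=\frac{q(\lambda)}{1-q(\lambda)}$ and $q(\lambda)=\lambda e^{1-\lambda}$. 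This is the shape suggested by the form of the constant. Take $c=-W_0\paren{-\frac{\delta}{e(1+\delta)}}\in(0,1)$ and $\lambda=1/c$. By definition of $W_0$ we have $c\,e^{-c}=\frac{\delta}{e(1+\delta)}$, i.e. $c\,e^{1-c}=\frac{\delta}{1+\delta}$. So once the exact form of $g$ is pinned down, the last step is the routine check that $g(1/c)\leq\delta$ (equivalently $q=\delta/(1+\delta)$). Passing to the complement then gives the strict inequality ``$<$'' for all $t$ simultaneously. If the literature bound turns out to be stated in terms of $c=1/\lambda$ directly, I would redo this bookkeeping accordingly.

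For the probabilistic bound itself, I would first reduce to $t$ ranging over the finitely many jump points of $\wh{F}_n$, i.e. the realized values $X_{(k)}$. Indeed, $\wh{F}_n$ is a right-continuous step function and $\bar F$ is nondecreasing, so the ratio can only exceed $\lambda$ at a jump of $\wh{F}_n$. The event then becomes $\set{\exists k: \bar F(X_{(k)})\leq k/(n\lambda)}$, a statement about heterogeneous order statistics. Via the quantile transform $X_i=F_i^{-1}(U_i)$ with i.i.d. uniforms $U_i$, the problem lives on a common probability space. In the uniform homogeneous case, Daniels' identity gives the value $1/\lambda$ exactly. In the heterogeneous case I would use an exponential supermartingale in reverse time: for each fixed $t$, $n\wh{F}_n(t)$ is a sum of independent Bernoulli$(F_i(t))$ variables. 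By convexity (Hoeffding's comparison of Poisson-binomial with binomial of mean $\bar F$), its exponential moments are dominated by those of the homogeneous binomial with parameter $\bar F(t)$.

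The main obstacle is the supremum over $t$. Unlike the homogeneous case, $t\mapsto \wh{F}_n(t)/\bar F(t)$ is not a reverse martingale, so Daniels' argument does not transfer directly. What I would try first is a first-passage decomposition: condition on the largest index $k$ at which $\bar F(X_{(k)})\leq k/(n\lambda)$. Then bound each term by the binomial tail $\Pro{\mathrm{Bin}(n,\bar F)\geq k}$ via the Chernoff/Poisson bound $(e\,n\bar F/k)^k e^{-n\bar F}$, which at $n\bar F=k/\lambda$ becomes $q(\lambda)^k$. Summing the geometric series $\sum_{k\geq1}q(\lambda)^k=\frac{q}{1-q}$ yields $g(\lambda)$. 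Checking that this decomposition is legitimate under heterogeneity is the delicate point. If it fails, I would fall back on quoting the Shorack--Wellner statement of van Zuijlen's inequality verbatim and only perform the Lambert inversion.
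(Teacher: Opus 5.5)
Your fallback (quote van Zuijlen's inequality from Shorack and Wellner, then invert) is exactly the paper's proof. The paper takes $\mathbb{P}(\exists t:\ \widehat F_n(t)\ge\lambda\bar F(t))\le q/(1-q)$ for $\lambda>1$, with $q=\lambda^{-1}e^{1-1/\lambda}$. It solves $q/(1-q)=\delta$ and keeps the $W_0$ root, because the $W_{-1}$ root gives $\lambda\le 1$. Your primary route re-derives that tail bound; it is a genuinely different and valid argument, but two points need correcting.

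First, your guess $q(\lambda)=\lambda e^{1-\lambda}$ swaps $\lambda$ and $1/\lambda$, and with it the tail bound is false. For $n=1$ and $X_1$ uniform, the bad event (for $t$ with $\widehat F_1(t)>0$) is $\{X_1\le 1/\lambda\}$. At $\lambda=5$ this has probability $0.2$, while $\lambda e^{1-\lambda}/(1-\lambda e^{1-\lambda})\approx 0.10$. It also breaks your ``equivalently $q=\delta/(1+\delta)$'', since $q(1/c)=c^{-1}e^{1-1/c}\neq ce^{1-c}$. Your own Chernoff step gives the right value, $(e/\lambda)^k e^{-k/\lambda}=(\lambda^{-1}e^{1-1/\lambda})^k$. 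With it, $q(1/c)=ce^{1-c}=\delta/(1+\delta)$, so $g(1/c)=\delta$ exactly.

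Second, the ``delicate point'' is not there. The first-passage decomposition is legitimate but is just a union bound; no supermartingale, quantile coupling or Hoeffding comparison is needed. Suppose $\widehat F_n(t)=k/n>0$ and $\widehat F_n(t)\ge\lambda\bar F(t)$. Then $X_{(k)}\le t$, so $\bar F(X_{(k)})\le k/(n\lambda)$, hence $S_k=\#\{i:\ \bar F(X_i)\le k/(n\lambda)\}\ge k$. Since $\{x:\ \bar F(x)\le k/(n\lambda)\}$ is a \emph{fixed} half-line, $S_k$ is a sum of independent Bernoulli variables with total mean at most $k/\lambda$. Chernoff with $\theta=\log\lambda$ and $1+x\le e^x$ gives $\mathbb{P}(S_k\ge k)\le(\lambda^{-1}e^{1-1/\lambda})^k$. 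Summing over $1\le k\le n$ gives $q/(1-q)$.

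Heterogeneity enters only through that mean, so the same argument covers $F_i$ that merely dominate the true cdfs, as in \autoref{ass_hetero}. Compared with the paper's one-line citation, your route is longer but self-contained. It also shows that van Zuijlen's constant is nothing more than a union bound plus a Poisson-type Chernoff bound.

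One caveat applies to both proofs. The complement only controls $t$ with $\widehat F_n(t)>0$, so the strict inequality fails wherever $\bar F(t)=0$ (e.g.\ $t<0$ for $p$-values, where both sides vanish). The correct conclusion is ``$\le$ for all $t$, strict wherever $\bar F(t)>0$''.
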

\begin{proof} 
Let $\delta \in (0,1)$. By Van Zuijlen inequality (see \citet{shorack2009empirical} chapter 25, from \citet{van1978properties}), for all $\lambda > 1$, 
$$\forall t \in \mathbb{R}\text{, } \frac{1}{n} \sum _{1\leq i\leq n}\mathds{1}_{X_i \leq t} \geq \lambda \bar{F}(t) $$ with probability at most $\frac{\exp(1-1/\lambda)(1/\lambda)}{1 - (1/\lambda)\exp(1-1/\lambda)}$. Solving $\frac{\exp(1-1/\lambda)(1/\lambda)}{1 - (1/\lambda)\exp(1-1/\lambda)} = \delta$ yields $\lambda = \frac{1}{-W_{-1}\left(-\frac{\delta}{e(1+\delta)}\right)}$ or $\lambda = \frac{1}{-W_{0}\left(-\frac{\delta}{e(1+\delta)}\right)}$. The fact that $\lambda >1$ then imposes that $\lambda = \frac{1}{-W_{0}\left(-\frac{\delta}{e(1+\delta)}\right)}$, as $W_{-1}$ takes its values in the set $(-\infty,-1]$. 
\end{proof}
\rem{There is no restriction on $\delta$ because $-\frac{1}{2}\mathrm{e}^{-1/2} \leq -\frac{1}{2}\mathrm{e}^{-1} \leq - \frac{\delta}{1+\delta} e^{-1}$. It follows that, by increasingness of $W_{0}$, for all $\delta \in (0,1)$, $-1/2  = W_{0}\left(-\frac{1}{2}\mathrm{e}^{-1/2}\right)\leq W_{0}\left(-\frac{\delta}{e(1+\delta)}\right)$, thus $\frac{1}{-W_{0}\left(-\frac{\delta}{e(1+\delta)}\right)} \geq 2 > 1$. }

\begin{Theorem}
Let $\alpha\in ]0,1[$. Assume that $(p_i)_{i\in \cH_0}$ is an independent family of $p$-values. Define
\[\zeta_k=k \wedge \frac{m}{-W_{-1}\paren{-\frac{\delta}{e(1+\delta)}}}\bar{F}\paren{p_{(k)}}.\] 
Then, the reference family $\Rfam = (R_k,\zeta_k)_{1 \leq k \leq m}$ has its $\JER$ bounded by $\alpha$.
\end{Theorem}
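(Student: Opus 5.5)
The plan follows the proof of \autoref{thm_families_topk_homogeneous}: reduce the JER event to a single ecdf-type event on the null $p$-values, then control that event with the Van Zuijlen inequality just established, with $\delta=\alpha$ (the statement's $\delta$ is the confidence level, so I read it as $\alpha$).

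First, I would reduce to the null $p$-values. As in the proof of \autoref{thm_families_topk_homogeneous}, almost surely $V(R_k)\leq i_{\cH_0}\paren{p_{(k)}}$ for every $k$, since $i\in R_k$ implies $p_i\leq p_{(k)}$. Also $V(R_k)\leq |R_k|=k$, which accounts for the $k\wedge$ in $\zeta_k$. So it suffices to show that, with probability at least $1-\alpha$, for all $t\in\RR$,
\[
i_{\cH_0}(t)\;\leq\; c\, m\bar F(t),
\]
where $c$ is the constant multiplying $m\bar F$ in $\zeta_k$. Evaluating at $t=p_{(k)}$ then gives $V(R_k)\leq\zeta_k$ simultaneously for all $k$, which is the JER control.

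Second, I would pass from exact cdfs to the bounds $F_i$. The Van Zuijlen theorem above is stated with the exact cdfs $J_i$ of the null $p$-values. \autoref{ass_hetero} only gives $J_i\leq F_i$. I would handle this with the same argument used for \autoref{bretin} in \appref{proof_bretin}: the event "$\sum_{i\in\cH_0}\indev{p_i\leq t}\geq \lambda\sum_{i\in\cH_0}J_i(t)$ for some $t$" contains the corresponding event with $J_i$ replaced by $F_i$. Alternatively, a quantile coupling in the spirit of \autoref{lem_unifcoupling} makes each $p_i$ stochastically larger than a variable with cdf exactly $F_i$. Applying the inequality to the $m_0$ independent null $p-$values (\autoref{ass_indep}) then gives, with probability at least $1-\alpha$ and for all $t$,
\[
i_{\cH_0}(t)<\lambda_\alpha\sum_{i\in\cH_0}F_i(t)\leq \lambda_\alpha\, m\bar F(t).
\]
The last step uses $F_i\geq0$ and $\cH_0\subseteq\Nm$.

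Third, I would identify $\lambda_\alpha$ with the constant written in the statement. This is the step I expect to be the main obstacle. The Van Zuijlen failure probability $g(\lambda)=\frac{\lambda^{-1}e^{1-1/\lambda}}{1-\lambda^{-1}e^{1-1/\lambda}}$ is decreasing in $\lambda>1$. Hence any constant $c\geq 1/(-W_0(-\frac{\alpha}{e(1+\alpha)}))$ keeps the failure probability at most $\alpha$. The statement uses $\frac{1}{-W_{-1}(\cdot)}$, the other root of $g(\lambda)=\alpha$.

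So the key verification is that the written constant lies in the admissible region $\set{\lambda>1: g(\lambda)\leq\alpha}$. My first attempt would be to track how $W_{-1}$ enters through the substitution $u=1/\lambda$ in the equation $u e^{-u}=\frac{\alpha}{e(1+\alpha)}$. I would check which root corresponds to $\lambda>1$ under the paper's sign conventions for $h(x)=xe^x$, and confirm that the constant in $\zeta_k$ is the one for which the displayed probability bound holds.

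If that check succeeds, combining the three steps gives $\Pro{\exists k, V(R_k)>\zeta_k}\leq\alpha$. The non-integrality of $\zeta_k$ causes no issue, since no floor is taken in the statement.
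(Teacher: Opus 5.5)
Your route is the paper's own: apply the Van Zuijlen inequality to the $m_0$ null $p$-values at level $\delta=\alpha$, bound $\sum_{i\in\cH_0}F_i(t)\le m\bar F(t)$, and evaluate at $t=p_{(k)}$. Your explicit event-inclusion step from the exact cdfs to the bounds $F_i$ is a step the paper leaves implicit.

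The gap is step 3, which you leave conditional (``if that check succeeds''). That check actually fails. $W_{-1}$ maps $[-1/e,0)$ onto $(-\infty,-1]$, and $x=-\frac{\alpha}{e(1+\alpha)}$ lies in $(-1/e,0)$, so $W_{-1}(x)<-1$. Hence the written constant $c=1/(-W_{-1}(x))$ is strictly less than $1$; for $\alpha=0.05$ it is about $0.17$. It is the root of $g(\lambda)=\alpha$ with $\lambda<1$, outside the range where Van Zuijlen's inequality holds. The paper's proof of the preceding theorem discards this root for exactly that reason.

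In fact the statement as literally written is false. Suppose all $m$ hypotheses are null with uniform $p$-values and $F_i(t)=t$ on $[0,1]$. Then $\zeta_m\le c\,m\,p_{(m)}\le cm<m=V(R_m)$ surely, so the JER equals $1$.

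So the correct outcome of your step 3 is that the constant must be $1/\bigl(-W_0\bigl(-\frac{\alpha}{e(1+\alpha)}\bigr)\bigr)$, about $56$ for $\alpha=0.05$. The $W_{-1}$ in the statement is a typo. The paper's one-line proof tacitly uses the $W_0$ constant from its Van Zuijlen theorem, with $\delta=\alpha$ as you guessed. With that correction your argument is complete and coincides with the paper's. By the monotonicity of $g$ that you noted, any larger constant would also work.
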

\begin{proof}
Use Van Zuijlen's inequality with $(p_i)_{i\in \cH_0}$, $n=m_0$, and upper bound $\sum\limits_{i\in \cH_0}\bar{F}\paren{p_{(k)}}$  by $\sum\limits_{1\leq i \leq m}\bar{F}\paren{p_{(k)}}$.
\end{proof}

\section{Bridging some gaps in the literature}\label{appendix_gaps}

In \citet{durand2020post}, in the proof of Proposition 1, the authors used the fact that $N_t(S)$ stochastically dominates $c=\sum_{i=1}^\nu\indev[a]{U_i>t}$, where the $U_i$ are independent uniform variables (we use the notations of that paper) to conclude. But that domination was only true at a fixed $t\in[0,1)$: for $t\in[0,1)$, for all $x\in\RR$, $\Pro[a]{N_t(S)\leq x}\leq \Pro[a]{c\leq x}$. Actually, we needed a much stronger result to be able to conclude, like the following: almost surely, $\tilde N_t(S)\geq c$ for all $t\in[0,1)$, for some process $\paren[a]{\tilde N_t(S)}_{t\in[0,1)}$ with the same distribution as $\paren[a]{N_t(S)}_{t\in[0,1)}$. This kind of result can be obtained by coupling but, given our \autoref{DKWin} (which is already proved by coupling), it is actually unnecessary given that now the $U_i$ can be considered super-uniform. In other words, by using the version of the DKW inequality that we gave here in \autoref{DKWin}, the gap in the proof is closed.

Also note that the proof of Proposition 1 in \citet{durand2020post} required $\lambda<1/2$, and so $\alpha/K<1/2$, because the authors used the version of the DKW inequality given by \citet{massart1990tight}. Thanks to \citet{reeve2024short}, \autoref{DKWin} does not have a similar constraint for its validity domain and so there is no more constraint on $\alpha$, as stated in \autoref{prop_dkwm_deter}.

Finally, in \citet{dohler2018new}, there was a slight oversight regarding the definition of $\cA$: 0 should be included in $\cA$, that is, we should have $\cA=\{0\}\cup\bigcup_{i=1}^m\cA_i$ instead of just $\cA=\bigcup_{i=1}^m\cA_i$. Otherwise, the sets that appear in the definitions of $\tau_m$ and $\tau_k$ for the [HSU], [HSD], [AHSU], [AHSD] and [HBR-$\lambda$] procedures may be empty, and so $\tau_m$ and $\tau_k$ may not be defined. This oversight has been corrected in the implementations of these procedures in the \texttt{R} package \texttt{DiscreteFDR} \citep{R-base, DiscreteFDR}.

\section{Complements on the top-\texorpdfstring{$k$}{k} setting}\label{appendix_ties}

\subsection{Top-\texorpdfstring{$k$}{k} with ties}\label{sub_appendix_ties}

Recall that in the top-$k$ setting, we defined $R_k$ with \equaref{eq_def_Rk_top_k}, while past works like \citet{MBR2024} used $R_k^\dag=R(p_{(k)})=\set{i\in\Nm: p_i\leq p_{(k)}}$. Also recall \autoref{rem_topk_nondecreasing_2}: we assume that $\zeta_k$ is of the form $f\paren[a]{p_{(k)}}\wedge k$ for some measurable function $f$. The next proposition shows that using $R_k$ or $R_k^\dag$ in that context is inconsequential, in the sense that it gives raise to the same confidence envelope.
\propl{Let $\zeta_k=f\paren[a]{p_{(k)}}\wedge k$ for all $k\in\Nm$, where $f$ is any measurable function with real values.

Let the following reference families: $\Rfam=\paren[a]{R_k, \zeta_k}$, $\Rfam^\sim=\paren[2]{R_k, f\paren[a]{p_{(k)}}}$, and  $\Rfam^\dag=\paren[2]{R_k^\dag, f\paren[a]{p_{(k)}}}$.

Then, for all $S\subseteq\Nm$,
\begin{align}
    \hat V^{\JER}_\Rfam(S)&=V^*_{\Rfam^\sim}(S)\label{eq_ties_1}\\
    &=V^*_{\Rfam^\dag}(S).\label{eq_ties_2}
\end{align}
}{prop_ties}
Note that the statement of \autoref{prop_ties} is purely deterministic. Also note that the statement of \autoref{prop_ties} implies that the choice of the permutation $\sigma$ that orders the $p$-values is irrelevant, given that $\Rfam^\dag$ does not depend on $\sigma$.
\begin{proof}
First, note that for any $A\subseteq \Nm$, $\abs[a]{A\cap R_k}\leq\zeta_k=f\paren[a]{p_{(k)}}\wedge k$ if and only if $\abs[a]{A\cap R_k}\leq f\paren[a]{p_{(k)}}$, because we always have $\abs[a]{A\cap R_k}\leq |R_k|=k$. This implies that $\Afam(\Rfam)=\Afam(\Rfam^\sim)$ (recall Equation \eqref{eq_a}). And so $V^*_{\Rfam}(S)= V^*_{\Rfam^\sim}(S)$ for all $S\subseteq\Nm$, by Equation \eqref{eq_vstar}. Hence \eqref{eq_ties_1} holds.

Note that $R_k\subseteq R_k^\dag$ and so if $\abs[a]{A\cap R_k^\dag}\leq f\paren[a]{p_{(k)}}$, then $\abs[a]{A\cap R_k}\leq f\paren[a]{p_{(k)}}$ also. Hence $\Afam(\Rfam^\dag)\subseteq \Afam(\Rfam^\sim)$.

Denote $g(k)=\max\set{j:p_{(j)}=p_{(k)}}$ and recall that $p_{(j)}=p_{\sigma(j)}$ for all $j\in\Nm$. First note that $f\paren[a]{p_{(g(k))}}=f\paren[a]{p_{(k)}}$ and $R_{g(k)}^\dag=R_k^\dag$ for all $k\in\Nm$. Second, note that if $p_i\leq p_{(k)}$ and $i\not\in R_{g(k)}$, then $i=\sigma(\ell)$ with $\ell>g(k)$, so $p_i=p_{\sigma(\ell)}=p_{(\ell)}>p_{(g(k))}=p_{(k)}$ by definition of $g(k)$, which is a contradiction, and so $R_{g(k)}=R_k^\dag$. Let $A\in \Afam(\Rfam^\sim)$. For all $k\in\Nm$,
\begin{align*}
    \abs[a]{A\cap R_k^\dag}&=\abs[a]{A\cap R_{g(k)}}\\
    &\leq f\paren[a]{p_{(g(k))}}\\
    &= f\paren[a]{p_{(k)}},
\end{align*}
so $A\in \Afam(\Rfam^\dag)$ and $\Afam(\Rfam^\sim)\subseteq \Afam(\Rfam^\dag)$. 

To conclude, $\Afam(\Rfam^\sim)= \Afam(\Rfam^\dag)$, and so $V^*_{\Rfam^\dag}(S)= V^*_{\Rfam^\sim}(S)$ for all $S\subseteq\Nm$. Hence \eqref{eq_ties_2} holds.
\end{proof}

\subsection{Links between the top-\texorpdfstring{$k$}{k} and \texorpdfstring{$k$}{k}-FWER settings}\label{sub_appendix_topk_kFWER}

\subsubsection{From \texorpdfstring{$k$}{k}-FWER to top-\texorpdfstring{$k$}{k}}\label{sec_kFWER_to_topk}

In this Section, we show that from every reference family in the $k$-FWER setting, a top-$k$ family can be built that leads to the same confidence envelope. 

Let $\Rfam=\paren{R_k, k-1}_{1\leq k\leq K}$ a reference family in the $k$-FWER setting, which means that for all $k\leq K$, $R_k=\set{i:p_i\leq t_k}$ for some nondecreasing sequence $(t_k)_{1\leq k\leq K}$. 

We introduce the following notation: for two integers $a\leq b$, $\llparenthesis a,b\rrbracket=\llbracket a,b\rrbracket\setminus\set{a}$. In particular, note that $\llparenthesis a,b\rrbracket=\varnothing$ if $a=b$.

The construction of the top-$k$ family is the following. Let $\sigma$ a (random) permutation that orders the $p$-values, and $R_\ell^\sigma=\set{\sigma(1),\dotsc,\sigma(\ell)}$. We let $\ell_0=0$, $\ell_{K+1}=m$ and, for any $k\in\llbracket1,K\rrbracket$, 
\begin{equation*}
    \ell_k=\max\set{\tilde \ell\in\Nm: p_{(\tilde\ell)}\leq t_k}.
\end{equation*}
For any $k\in\llbracket1,K\rrbracket$ and any $\ell\in \llparenthesis \ell_{k-1},\ell_k\rrbracket$, we let $\zeta^\sigma_\ell=k-1$. For any $\ell\in \llparenthesis \ell_{K},\ell_{K+1}\rrbracket$, we let $\zeta^\sigma_\ell=m$. Finally, we let $\Rfam^\sigma=\paren{R_\ell^\sigma,\zeta_\ell^\sigma}_{1\leq \ell\leq m}$, which is indeed built according to the top-$k$ setting.

An example of such construction, that highlights some formal difficulties, is given below (\autoref{ex_kFWER_to_topk}).
\propl{
For all $S\subseteq\Nm$, $\hat V^{\JER}_\Rfam(S)=V^*_{\Rfam^\sigma}(S)$.
}{prop_kFWER_to_topk}
Like \autoref{prop_ties}, this Proposition is purely deterministic. The proof uses arguments similar to the proof of \autoref{prop_ties}, that is, the use of Equations \eqref{eq_a} and \eqref{eq_vstar}.
\begin{proof}
First note that, for all $k\in\llbracket1,K\rrbracket$, $R_k=R^\sigma_{\ell_k}$. Indeed, if $p_i\leq t_k$, then $i=\sigma(\ell)$ with $\ell\leq \ell_k$ by definition of $\ell_k$ as a maximum, and conversely, for all $\ell\leq \ell_k$, $p_{\sigma(\ell)}\leq p_{\sigma(\ell_k)}\leq t_k$.

Second, note that for all $k\in\llbracket1,K\rrbracket$, $\zeta^\sigma_{\ell_k}\leq k-1$. Indeed, let $k\in\llbracket1,K\rrbracket$. Given that
\[
\bigcup_{k'=1}^k\llparenthesis \ell_{k'-1}, \ell_{k'}\rrbracket = \llbracket 1, \ell_k\rrbracket,
\]
there exists $k'$, $1\leq k'\leq k$, such that $\ell_k\in \llparenthesis \ell_{k'-1}, \ell_{k'}\rrbracket$. Then, by definition, $\zeta^\sigma_{\ell_k}=k'-1\leq k-1$.

Now, let $A\in \Afam\paren{\Rfam}$. Let any $\ell\in\Nm$, there exists a unique $k\in\llbracket1,K+1\rrbracket$ such that $\ell\in\llparenthesis \ell_{k-1}, \ell_{k}\rrbracket$. If $k=K+1$, then $\zeta^\sigma_\ell=m$ and so obviously $\abs{A\cap R^\sigma_\ell}\leq m = \zeta^\sigma_\ell$. If $k\leq K$, then $\zeta^\sigma_\ell=k-1$. By the first remark, $R_k=R^\sigma_{\ell_k}$, and given that $R^\sigma_\ell\subseteq R^\sigma_{\ell_k}$, we have:
\begin{align*}
    \abs{A\cap R^\sigma_\ell}&\leq \abs{A\cap R^\sigma_{\ell_k}}\\
    &=\abs{A\cap R_{k}}\\
    &\leq k-1 \text{ because }A\in \Afam\paren{\Rfam}\\
    &=\zeta^\sigma_\ell.
\end{align*}
This proves that $\Afam\paren{\Rfam}\subseteq\Afam\paren{\Rfam^\sigma}$.

Let $A\in \Afam\paren{\Rfam^\sigma}$. Let any $k\in\llbracket 1, K\rrbracket$. We have:
\begin{align*}
    \abs{A\cap R_k}&= \abs{A\cap R^\sigma_{\ell_k}} \text{ by the first remark}\\
    &\leq \zeta^\sigma_{\ell_k} \text{ because }A\in \Afam\paren{\Rfam^\sigma}\\
    &\leq k-1 \text{ by the second remark}.
\end{align*}
This proves that $\Afam\paren{\Rfam^\sigma}\subseteq\Afam\paren{\Rfam}$.

In the end, $\Afam\paren{\Rfam}=\Afam\paren{\Rfam^\sigma}$ and so $\hat V^{\JER}_\Rfam(\cdot)=V^*_{\Rfam^\sigma}(\cdot)$.
\end{proof}
\rem{Given that each $R_k$ is a particular $R^\sigma_{\ell_k}$, the previous construction just comes back to adding the missing $R^\sigma_\ell$'s to the family and to associate them a conservative upper bound for $V\paren[a]{R^\sigma_\ell}$.}
\exl{
Assume that $m=4$ and that the threshold family is the Simes family as described in \autoref{thm_simes_kFWER}: $t_1=\frac{\alpha}{4}, t_2=\frac{2\alpha}{4}, t_3=\frac{3\alpha}{4}, t_4={\alpha}{}$. Assume that $p_1=\frac{\alpha}{4000}, p_2=\frac{1.5\alpha}{4}, p_3=\frac{1.9\alpha}{4}, p_4=\frac{3.5\alpha}{4}$. Then $\ell_1=1$, $\ell_2=\ell_3=3$, and $\ell_4=4$. Notably, in the proof of \autoref{prop_kFWER_to_topk}, the $k'$ such that $\ell_3\in \llparenthesis \ell_{k'-1}, \ell_{k'}\rrbracket$ is $k'=2$, because $\ell_2=\ell_3$ and so $\llparenthesis \ell_{2}, \ell_{3}\rrbracket=\varnothing$. 
}{ex_kFWER_to_topk}

\subsubsection{From top-\texorpdfstring{$k$}{k} to \texorpdfstring{$k$}{k}-FWER}\label{sec_topk_to_kFWER}

Conversely, in this Section, we show that from every reference family in the top-$k$ setting, a $k$-FWER family can be built that leads to the same confidence envelope. 

Let $\Rfam=\paren{R_k, \zeta_k}_{1\leq k\leq K}$ a reference family in the top-$k$ setting, which means that $R_k=\set{\sigma(1),\dotsc,\sigma(k)}$ for some $\sigma$ a (random) permutation that orders the $p$-values. Recall \autoref{rem_topk_nondecreasing_2}: we assume that $\zeta_k$ is of the form $f\paren[a]{p_{(k)}}\wedge k$ for some measurable, nondecreasing function $f$ that only takes integer values, which is a stronger assumption than in \autoref{prop_ties}.

The construction of the $k$-FWER family is the following. By convention, we let $p_{(0)}=-1$ and $f(-1)=0$. We then let, for each $s\in\Nm$,
\begin{equation*}
    \nu(s)=\max\set[a]{k\in\llbracket0,m\rrbracket: f\paren[a]{p_{(k)}}\leq s-1}.
\end{equation*}
For any $s\in\Nm$, we then let $t_s=p_{\paren[a]{\nu(s)}}$ and $R^\nu_s=\set[a]{i: p_i\leq t_s}$. Note that $\nu(\cdot)$ is nondecreasing because $f\paren[a]{p_{(\nu(s))}}\leq s-1\leq (s+1) - 1$, and so the sequence $\paren{t_s}_{s\in\Nm}$ is nondecreasing. Finally, we let $\Rfam^\nu=\paren{R_s^\nu,s-1}_{1\leq s\leq m}$, which is indeed built according to the $k$-FWER setting.
\propl{Let $\zeta_k=f\paren[a]{p_{(k)}}\wedge k$ for all $k\in\Nm$, where $f$ is any measurable, nondecreasing function with integer values.

Then, for all $S\subseteq\Nm$, $\hat V^{\JER}_\Rfam(S)=V^*_{\Rfam^\nu}(S)$.
}{prop_topk_to_kFWER}
Like \autoref{prop_ties}, this Proposition is purely deterministic. The proof uses arguments similar to the proof of \autoref{prop_ties}, that is, the use of Equations \eqref{eq_a} and \eqref{eq_vstar}.
\begin{proof}
First note that for any $s\in\Nm$, if $\nu(s)\neq0$, $R^\nu_s=R_{\nu(s)}$. Indeed, if $i\in R^\nu_s$, then $p_i\leq p_{\paren[a]{\nu(s)}}$ and $i=\sigma(\ell)$ for some $\ell\in\Nm$. Hence $p_{\paren[a]{\ell}}\leq p_{\paren[a]{\nu(s)}}$, $f\paren[a]{p_{\paren[a]{\ell}}}\leq f\paren[a]{p_{\paren[a]{\nu(s)}}}\leq s-1$ because $f$ is nondecreasing, and $\ell\leq\nu(s)$ by definition of $\nu(s)$, hence $i\in R_{\nu(s)}$. Conversely, if $i\in R_{\nu(s)}$, this means that $i=\sigma(\ell)$ for some $\ell\leq \nu(s)$, and so $p_i=p_{\paren[a]{\ell}}\leq p_{\paren[a]{\nu(s)}}$ and $i\in R^\nu_s$.

Let $A\in\Afam(\Rfam)$, and $s\in\Nm$. If $\nu(s)=0$, $\abs[a]{A\cap R^\nu_s}=\abs[a]{A\cap \varnothing}=0\leq s-1$. If $\nu(s)\neq0$, by the previous remark,
\begin{align*}
    \abs[a]{A\cap R^\nu_s}&=\abs[a]{A\cap R_{\nu(s)}}\\
    &\leq \zeta_{\nu(s)} \text{ because }A\in\Afam(\Rfam)\\
    &\leq f\paren[a]{p_{\paren[a]{\nu(s)}}}\\
    &\leq s-1,
\end{align*}
so, in any case, $A\in\Afam(\Rfam^\nu)$.

Let $A\in\Afam(\Rfam^\nu)$, and let $k\in\Nm$. Note that $\abs[a]{A\cap R_k}\leq \abs[a]{R_k}=k$, so if $f\paren[a]{p_{(k)}}\geq k$, $\zeta_k=k$ and $\abs[a]{A\cap R_k}\leq \zeta_k$. Now assume that $f\paren[a]{p_{(k)}}\leq k-1$, which implies that $k\leq\nu(k)$. Let
\[
s^*=\min\set{s\in\Nm: k\leq\nu(s)},
\]
which is well-defined, because the set $\set{s\in\Nm: k\leq\nu(s)}$ is non-empty because it contains $k$, as we just emphasized. We distinguish cases, between $s^*=1$ and $s^*>1$. If $s^*=1$, $k\leq\nu(1)$ and so $0\leq f\paren[a]{p_{(k)}}\leq f\paren[a]{p_{(\nu(1))}}\leq0$ and so $f\paren[a]{p_{(k)}}=0$. If $s^*>1$, we have $\nu\paren[a]{s^*-1}< k\leq  \nu\paren[a]{s^*}$ and so $s^*-2<f\paren[a]{p_{(k)}}\leq s^*-1$. Given that $f$ only takes integer values, in both cases, $f\paren[a]{p_{(k)}}= s^*-1$. Also note that $k\leq  \nu\paren[a]{s^*}$ implies that both $R_k\subseteq R_{\nu\paren[a]{s^*}}$ and $\nu\paren[a]{s^*}\neq0$, so $R_{\nu\paren[a]{s^*}}=R^\nu_{s^*}$ by the previous remark. Finally,
\begin{align*}
    \abs[a]{A\cap R_k}&\leq \abs[a]{A\cap R^\nu_{s^*}}\\
    &\leq s^*-1 \text{ because }A\in\Afam(\Rfam^\nu)\\
    &=f\paren[a]{p_{(k)}},
\end{align*}
so $\abs[a]{A\cap R_k}\leq\zeta_k$ and $A\in\Afam(\Rfam)$.

In the end, $\Afam\paren{\Rfam}=\Afam\paren{\Rfam^\nu}$ and so $\hat V^{\JER}_\Rfam(\cdot)=V^*_{\Rfam^\nu}(\cdot)$.
\end{proof}
\rem{The previous construction just comes back to identifying where the sequence $\paren[a]{f\paren[a]{p_{(k)}}}_{k\in\Nm}$ makes its jumps.}

\subsubsection{The two Simes envelopes are the same}\label{sec_simes_equal_simes}

In this Section, we prove that the two reference families built using the Simes family that we encountered in \secref{sec_homo_JER} yield the exact same confidence envelope, as a corollary of \secref{sec_kFWER_to_topk}. 

Namely, let $\Rfam$ the reference family in the $k$-FWER setting described in \autoref{thm_simes_kFWER}: $\Rfam=\paren[a]{\set[a]{i:p_i\leq\frac{\alpha k}m}, k-1}_{k\in\Nm}$. Let $\Rfam^\top=\paren[a]{R^\sigma_\ell, \zeta_\ell}_{\ell\in\Nm}$ the reference family in the top-$k$ setting described in \autoref{thm_families_topk_homogeneous}, item \ref{item_topk_simes}, which means that $R_\ell^\sigma=\set{\sigma(1),\dotsc,\sigma(\ell)}$ for $\sigma$ a (random) permutation that orders the $p$-values, and $\zeta_\ell=\indev[a]{p_{(\ell)}>0}\floor[a]{\frac{mp_{(\ell)}}{\alpha}^-}\wedge \ell$.

\corl{
For all $S\subseteq\Nm$, $\hat V^{\JER}_\Rfam(S)=V^*_{\Rfam^\top}(S)$.
}{cor_simes_equal_simes}
\begin{proof}
First note that, just like we proved \eqref{eq_ties_1}, $V^*_{\Rfam^\top}(\cdot)=V^*_{\Rfam^{\ddag}}(\cdot)$, where 
\[
\Rfam^{\ddag}=\paren[a]{R^\sigma_\ell, \indev[a]{p_{(\ell)}>0}\floor[a]{\frac{mp_{(\ell)}}{\alpha}^-}\wedge m}_{\ell\in\Nm},
\]
that is, we just replaced the ``$\wedge\ell$'' by ``$\wedge m$''. The remainder of the proof consists in showing that $\Rfam^{\ddag}$ is exactly the family $\Rfam^\sigma$ constructed in \secref{sec_kFWER_to_topk} from $\Rfam$ (hence $K=m$ here), that is, that $\zeta^\sigma_\ell=\indev[a]{p_{(\ell)}>0}\floor[a]{\frac{mp_{(\ell)}}{\alpha}^-}\wedge m$ for all $\ell\in\Nm$. The result will then ensue by applying \autoref{prop_kFWER_to_topk}.

Let $\ell\in\Nm$. Let $k\in\llbracket1,m+1\rrbracket$ the unique integer such that $\ell\in \llparenthesis \ell_{k-1},\ell_k\rrbracket$. We distinguish the three cases $p_{(\ell)}=0$, $k\leq m$ with $p_{(\ell)}>0$, and $k=m+1$.

First, if $p_{(\ell)}=0$, $\indev[a]{p_{(\ell)}>0}\floor[a]{\frac{mp_{(\ell)}}{\alpha}^-}\wedge m=0$. Given that $p_{(\ell)}=0\leq \frac{\alpha \times 1}m$, $\ell\leq \ell_1$, $k=1$, and so $\zeta^\sigma_\ell=1-1=0$ by definition. So, indeed, $\zeta^\sigma_\ell=\indev[a]{p_{(\ell)}>0}\floor[a]{\frac{mp_{(\ell)}}{\alpha}^-}\wedge m$.

Second, assume that $k\leq m$ and $p_{(\ell)}>0$. Then $\zeta^\sigma_\ell=k-1$ by definition. Given that $\ell_{k-1}<\ell\leq\ell_k$, by definition of $\ell_k$, we have $p_{(\ell)}\leq p_{(\ell_k)}\leq \frac{\alpha k}m$. If $k\geq 2$, we also have, by definition of $\ell_{k-1}$, that $p_{(\ell)}>\frac{\alpha (k-1)}m$, and that is also true if $k=1$ because we assumed $p_{(\ell)}>0$. Then $k\geq \frac{mp_{(\ell)}}{\alpha}$ and $k-1<\frac{mp_{(\ell)}}{\alpha}$, which means that $k=\ceil[a]{\frac{mp_{(\ell)}}{\alpha}}$ and, by \autoref{rem_right_floor}, $\zeta^\sigma_\ell=k-1=\floor[a]{\frac{mp_{(\ell)}}{\alpha}^-}=\indev[a]{p_{(\ell)}>0}\floor[a]{\frac{mp_{(\ell)}}{\alpha}^-}$. Furthermore, given that $k\leq m$, $\zeta^\sigma_\ell=k-1< m$ and so, indeed, $\zeta^\sigma_\ell=\indev[a]{p_{(\ell)}>0}\floor[a]{\frac{mp_{(\ell)}}{\alpha}^-}\wedge m$.

Third, if $k= m+1$, then $\zeta^\sigma_\ell=m$ by definition. Furthermore, by definition of $\ell_m$, $p_{(\ell)}>\frac{\alpha m}{m}=\alpha$. In particular, $\indev[a]{p_{(\ell)}>0}=1$. We also have that $\frac{mp_{(\ell)}}{\alpha}>m$ and so $\floor[a]{\frac{mp_{(\ell)}}{\alpha}^-}\geq m$. So, indeed, $\zeta^\sigma_\ell=\indev[a]{p_{(\ell)}>0}\floor[a]{\frac{mp_{(\ell)}}{\alpha}^-}\wedge m$.
\end{proof}

The following is a counterexample showing that the equality can fail if we forget to use the left-limit of the floor function instead of the floor function in the top-$k$ Simes envelope.
\exl{
Let $m=2$, $p_1=\frac\alpha2$, $p_2=\alpha$, which can happen with large probability under the alternative because we do not assume that the distributions of the $p$-values are continuous. Then $\zeta_1=0$ and $\zeta_2=1$ and, for example, $V^*_{\Rfam^\top}(\set{1})=0$. But, without the left-limit, we would get $\zeta_1=1$, $\zeta_2=2$ and $V^*_{\Rfam^\top}(\set{1})=1$.
}{counterex_simes_equal_simes}

\section{Uniformization of super-uniform \texorpdfstring{$p$}{p}-values}\label{appendix_random_pvalues}

We present the following \autoref{lem_unif}, that explains how to transform any random variable into a uniform variable, which is, additionally in the case of super-uniformity, smaller than the first one.
This construction encompasses the classical ``randomized $p$-value'' construction, as stated by \autoref{rem_random_pvalues}. 

Note that this Lemma also offers an alternative way to extend the validity domain of various concentration inequalities from uniform variables to super-uniform variables (see Propositions \ref{DKWin} and \ref{Wellnerin}), by providing a different coupling construction than in \autoref{lem_unifcoupling}.

\leml[Uniformization of any random variable]{
Let $Y$ any real-valued random variable, let $F$ its cdf and $F^-$ the left-limit function of $F$. Assume that the probability space where $Y$ is defined is rich enough so that there exists $U$ another random variable independent from $Y$, with uniform distribution: $U\sim \cU([0,1])$. Define $\widetilde Y$ as
\[ 
\widetilde Y = F^-(Y) + U(F(Y) - F^-(Y) ).
\]
Then $\widetilde Y\sim \cU([0,1])$.

Furthermore, if $Y$ is super-uniform: $Y \succeq \cU([0,1])$, then $\widetilde Y \leq Y$ almost surely.
}{lem_unif}
\begin{proof}
We prove that the cdf $\widetilde F$ of $\widetilde Y$ is $x\mapsto 0 \vee (x \wedge 1)$.

First note that $\widetilde Y \in [0,1]$ a.s., so $\widetilde F(x)=1$ for all $x\in [1,\infty)$ and $\widetilde F(x)=0$ for all $x\in (-\infty, 0)$. So let $x\in [0,1)$ and let us show that $\widetilde F(x)=x$. We distinguish two cases: if $x\in F(\RR)$ or not.

In the first case, there exists $\beta\in\RR$ such that $x=F(\beta)$. We remark the following way to write $\widetilde F$:
\begin{align*}
    \widetilde F(x)&=\Pro[a]{F^-(Y) + U(F(Y) - F^-(Y) ) \leq x}\\
    &=\Esp[a]{\indev{F(Y)\leq x} + \indev{F^-(Y)\leq x<F(Y)}\indev{U(F(Y)-F^-(Y))\leq x-F^-(Y)} }
\end{align*}
Now, if $F^-(Y)\leq x<F(Y)$ is true, then $ F(\beta)<F(Y)$. Because $F$ is nondecreasing, necessarily $\beta<Y$, which in turn implies that $F(\beta)\leq F^-(Y)$: in the end, $x=F(\beta)=F^-(Y)$, and $F(Y)>F^-(Y)$, so $\indev{U(F(Y)-F^-(Y))\leq x-F^-(Y)}=\indev{U=0}=0$ a.s. So $\indev{F^-(Y)\leq x<F(Y)}\indev{U(F(Y)-F^-(Y))\leq x-F^-(Y)}=0$ a.s., and 
\begin{align*}
    \widetilde F(x)&=\Esp[a]{\indev{F(Y)\leq x}}\\
    &=\Pro[1]{F(Y)\leq x}\\
    &=x,
\end{align*}
where the last equality comes from Proposition 2 of Chapter 1 of \citet{shorack2009empirical}, thanks to $x$ being in the range of $F$.

Now we turn to the case where $x\not\in F(\RR)$. Let $q_x$ the $x$-level quantile of $F$: $q_x=\inf\set{t\in\RR: F(t)\geq x}$. By the usual properties of quantiles, $F^-(q_x)\leq x$ and $F(q_x)\geq x$, and we even have $F(q_x)> x$ because $x\not\in F(\RR)$. 

We now show that $\widetilde Y\leq x$ if and only if either $Y<q_x$, or $Y=q_x$ and $U(F(q_x)-F^-(q_x))\leq x-F^-(q_x)$. Assume that $\widetilde Y\leq x$. Note that $\widetilde Y\in\brac{F^-(Y),F(Y)}$ and so $F^-(Y)\leq x$. If we had $Y > q_x$, then we would have $F^-(Y)\geq F(q_x)$, and so $x\geq F^-(Y)\geq F(q_x)>x$ which would be a contradiction. So $Y\leq q_x$ and the implication follows. Conversely, assume the other proposition. Obviously, if $Y=q_x$ and $U(F(q_x)-F^-(q_x))\leq x-F^-(q_x)$ then $\widetilde Y\leq x$. If $Y<q_x$, then $\widetilde Y\leq F(Y)\leq F^-(q_x)\leq x$.

Thanks to the equivalence, we have 
\begin{align*}
    \widetilde F(x)&=\Pro[a]{Y<q_x}+\Pro[2]{\set{Y=q_x}\cap \set{ U(F(q_x)-F^-(q_x))\leq x-F^-(q_x) }}\\
    &=F^-(q_x)+\Pro[a]{Y=q_x}\Pro[a]{U(F(q_x)-F^-(q_x))\leq x-F^-(q_x)},
\end{align*}
by independence of $Y$ and $U$. Note that $F^-(q_x)\leq x < F(q_x)$, so $\frac{x-F^-(q_x)}{F(q_x)-F^-(q_x)}$ is well defined and in $[0,1]$. Then,
\begin{align*}
    \widetilde F(x)&=F^-(q_x)+\paren[1]{F(q_x)-F^-(q_x)}\Pro[a]{U\leq \frac{x-F^-(q_x)}{F(q_x)-F^-(q_x)}}\\
    &=F^-(q_x)+\paren[1]{F(q_x)-F^-(q_x)}\frac{x-F^-(q_x)}{F(q_x)-F^-(q_x)}\\
    &=x.
\end{align*}
This concludes that $\widetilde Y\sim \cU([0,1])$.

Now assume that $Y \succeq \cU([0,1])$, which means that $F(x)\leq 0 \vee (x \wedge 1)$ for all $x\in\RR$. In particular, $F(x)\leq x$ for all $x\geq0$, and $\Pro[a]{Y\leq 0}=0$, so $Y\geq 0$ a.s. Then
\begin{align*}
    \widetilde Y &= F^-(Y) + U(F(Y) - F^-(Y))\\
    &\leq F(Y) \text{ a.s.}\\
    &\leq Y \text{ a.s.,}
\end{align*}
which concludes.
\end{proof}

\reml{The construction of \autoref{lem_unif} covers the classical definition of ``random $p$-values''. Indeed, assume that we have at hand, for each $i\in\Nm$, a test statistic $T_i$ such that for all $\mu\in H_i, X\sim\mu$, the distribution of $T_i(X)$ does not depend on $\mu$ and is known. The classical unilateral $p$-value derived from $T_i$ is either $p_i^{(1)}(X)=\Proo[1]{T_i(X_0)\geq T_i(X)}{X_0\sim\mu_i}$, either $p_i^{(2)}(X)=\Proo[1]{T_i(X_0)\leq T_i(X)}{X_0\sim\mu_i}$, where the notation $\mathbb{P}_{X_0\sim\mu_i}$ denotes that the integration takes place with respect to a random variable $X_0$ drawn according to some $\mu_i\in H_i$. Then the corresponding classical random $p$-value is either $\tilde p_i^{(1)}(X)=\Proo[1]{T_i(X_0)> T_i(X)}{X_0\sim\mu_i}+U \Proo[1]{T_i(X_0)= T_i(X)}{X_0\sim\mu_i}$, either $\tilde p_i^{(2)}(X)=\Proo[1]{T_i(X_0)< T_i(X)}{X_0\sim\mu_i}+U \Proo[1]{T_i(X_0)= T_i(X)}{X_0\sim\mu_i}$, where $U\sim\cU([0,1])$ is independent of $X$. The key observation is that $\tilde p_i^{(1)}(X)$ and $\tilde p_i^{(2)}(X)$ follow exactly from the construction of \autoref{lem_unif}, applied respectively to the random variables $-T_i(X)$ and $T_i(X)$, and their cdf if $X\sim\mu_i$. 
}{rem_random_pvalues}

\section{Complements on \texorpdfstring{\citet{Katsevichramdas2020}}{Katsevich and Ramdas (2020a)} and \texorpdfstring{\citet{MBR2024}}{Meah et al. (2024)} confidence envelopes}\label{sec:KRcomplements}

We present in this section a slight refinement of the adaptive top-$k$ method from \citet{MBR2024} obtained by application of the 
formalism introduced in \secref{sec_general_shortcuts}.
Let us consider the confidence bound for the ecdf introduced by \citet{Katsevichramdas2020}; let $\delta \in (0,0.31)$, then, for $U_1,\dots,U_n \sim \mathcal{U}\paren[1]{[0,1]}$,
\[\Pro{\frac{1}{n}\sum^n_{i=1}\indev{U_i\leq t} \geq C_\delta\paren{1/n+t}}\leq \delta,\]
where $C_\delta = \frac{\log\paren{1/\delta}}{\log\paren{1+\log\paren{1/\delta}}}$.\\
Hence we derive for all $A\subseteq \Nm$ and $\alpha \in (0,0.31)$ a local test of level $\alpha$ : \[\varphi_A = \sup_{t\in [0,1]}\indev{i_A(t) \geq f_{|A|}(t)},\]
where for all $t\in [0,1]$ and $n\in \Nm$, $f_n(t) = C_\delta\paren[a]{1+tn}$. \\
For all $t\in [0,1]$ and $n\in \Nm$, $\varphi_{\Nm,n,t} = \indev{f_n(t)\leq n+i(t)-m}$. Therefore: 
\begin{itemize}
\item If $t\geq \frac{1}{C_\delta}$, $\varphi_{\Nm,n,t} = 0$,
\item If  $t< \frac{1}{C_\delta}$ and $\varphi_{\Nm,n,t} = 1$, $f_{n+1}(t) = C_\delta\paren[1]{1+\paren{n+1}t} = f_{n}(t) +C_\delta t \leq  n+1 +i(t)-m$. Hence $\varphi_{\Nm,n+1,t} = 1$.
\end{itemize}
Thus,  $\hat{V}^{\SCo}_{\varphi}\paren[1]{ \Nm} = \hat{V}^{\SCt}_{\varphi}\paren[1]{ \Nm} $ because $\varphi$ verifies the assumptions from \autoref{SC1eqSC2} for $S = \Nm$.\\
We now compute $b(t) = \max\set{n\in \NM: \varphi_{\Nm,n,t} = 0}$: \begin{itemize}
\item If $t \geq \frac{1}{C_\delta}$, the first point above implies $b(t)= m$.
\item If $t < \frac{1}{C_\delta}$, then: \begin{align*}
b(t) &= \max\set{n\in \NM: C_\delta\paren[a]{1+tn}> n+i(t)-m} \\
&= \max\set{n\in \NM: n > \frac{m-i(t) + C_\delta}{1 - tC_\delta}}\\
&= m\wedge \paren{\ceil{\frac{m-i(t) + C_\delta}{1 - tC_\delta}}-1}.
\end{align*} 
\end{itemize}
Hence, $\hat{V}^{\SCo}_{\varphi}\paren[1]{ \Nm} = m \wedge \paren{\min\limits_{t \in [0,1/C_\delta)}\ceil{\frac{m-i(t) + C_\delta}{1 - tC_\delta}}-1} $. \\
From \autoref{SCfuncversion}, we conclude that $\hat{V}^{\IP}_{\varphi}(S) = \min\limits_{0\leq k\leq |S|}\paren[2]{\ceil{f_{\hat{m}_0}\paren{p_{(k : S)}}} -1 + \abs{S\setminus R\paren{p_{(k:S)}}}}$ where $\hat{m}_0 = m \wedge \paren{ \min\limits_{t \in [0,1/C_\delta)}\ceil{\frac{m-i(t) + C_\delta}{1 - tC_\delta}}-1}$\footnote{The careful reader may remark that the local test from this proposition are expressed with a ``$>$'' instead of a ``$\geq$'' and that we replaced ``$\floor{\cdot}$'' by ``$\ceil{\cdot}-1$'' in the expression of $\hat{V}^{\IP}_{\varphi}(S)$. The proof can easily be adjusted, since the $f_n$ are bijective in the specific case of KR inequality. }.\\
\citet{MBR2024} already proposed an over-estimator on $m_0$, that we denote $\tilde{m}_0$, similar to the one we expose here. Precisely, the following holds: \[\hat{m}_0 = \ceil[1]{\tilde{m}_0\wedge\paren{m-i(0)+C_\delta}-1}.\]
It could be legitimate to question the usefulness of such refinements as the one described above or the one detailed by \autoref{SimesVSC1equalsVSC2} on vanilla Simes confidence bounds. However, Figure~20 from \citet{MBR2024} shows that it reduces conservativeness, as the closed testing envelopes --- that are equal to the inversion procedure envelopes --- appear to be less conservative. We have in turn run these simulation while integrating our refinements; the adapted interpolated envelopes and the close-testing envelopes happened to be the same, as expected from our theoretical developments.

\section{Proofs} \label{sec_proof}
\subsection{Proof of \autoref{lem_unifcoupling}}\label{proof_lem_unifcoupling}

Let $Q_1, \dotsc, Q_N$ the respective quantile functions of $U_1, \dotsc, U_N$. Let $\widetilde U_1, \dotsc, \widetilde U_N$ be $N$ independent random variables that are $\cU[0,1]$ (possibly defined on another probability space). By classical quantile properties, we know that $V_i=Q_i\paren[1]{\widetilde U_i}$ has the same distribution as $U_i$ for any $i\in\llbracket 1, N\rrbracket$, and so, by independence, $\paren[a]{V_i}_{i\in\llbracket 1, N\rrbracket}$ has the same distribution as $\paren{U_i}_{i\in\llbracket 1, N\rrbracket}$. Furthermore, by super-uniformity, $V_i = Q\paren[2]{\widetilde{U}_i} \geq \widetilde U_i$ almost surely for all $i\in\llbracket 1, N\rrbracket$.

\subsection{Proofs of \autoref{DKWin} and \autoref{Wellnerin} }\label{proof_DKWin}

We first prove \autoref{DKWin}. We know that the statement is true if the random variables are uniform, by \citet{reeve2024short}. Using the coupled variables $(\widetilde{U}_i,V_i)_{i \in \intrN}$ as constructed in \autoref{lem_unifcoupling}, it holds
\begin{align*}
    \Pro[4]{\sqrt{N}\sup\limits_{t\in [0,1]}\paren{\frac1N\sum_{i=1}^N\indev[a]{U_i\leq t} - t}> \lambda}&= \Pro[4]{\sqrt{N}\sup\limits_{t\in [0,1]}\paren{\frac1N\sum_{i=1}^N\indev[1]{V_i\leq t}- t}> \lambda}\\
    &\leq \Pro[4]{\sqrt{N}\sup\limits_{t\in [0,1]}\paren{\frac1N\sum_{i=1}^N\indev[a]{\widetilde U_i\leq t} - t}> \lambda}\\
    &\leq \exp\paren[a]{-2\lambda^2}.
\end{align*}

To prove \autoref{Wellnerin}, we know that the statement is true if $U_1, \dotsc, U_N$ are uniform, by \citet{MBR2024}. We then use the same trick as in the proof of \autoref{DKWin}.

\subsection{Proof of \autoref{Simesin_var}}\label{proof_Simesin_var}

We consider that the event $\set{\forall k\in \NN : U_{(k)}> \frac{k\delta}{N}}$ is realized, which happens with probability at least $1-\delta$ by \autoref{Simesin}. For any $t>\delta$, $\frac1N\sum_{i=1}^N\indev{ U_i\leq t}\leq 1 <\frac{t}{\delta}$. Now let $j\in\set{0,\dotsc, N-1}$ and $t\in\left(\frac{j\delta}{N},\frac{(j+1)\delta}{N}  \right]$. Then $\sum_{i=1}^N\indev{ U_i\leq t}$ is equal to $\mathrm{card}\set{i: U_i\leq t}$, and given that $U_{(j+1)}> \frac{(j+1)\delta}{N}\geq t$, $\mathrm{card}\set{i: U_i\leq t}\leq j < \frac{t N}{\delta}$ and finally $\frac1N\sum_{i=1}^N\indev{ U_i\leq t}<\frac{t}{\delta}$.

\subsection{Proof of \autoref{thm_simes_kFWER}}\label{proof_thm_simes_kFWER}
We recall here the short proof of \autoref{thm_simes_kFWER}:\\
Let $\mu\in\statfam$, $\alpha\in(0,1)$, and $X\sim\mu$.
\begin{align*}
    1-\JER(\Rfam)&=\Pro[1]{\exists k\leq m, |R_k\cap \cH_0|\geq k}\\
    &=\Pro[a]{\exists k\leq  m_0, p_{\paren[a]{k:\cH_0}}\leq  \frac{\alpha k}{m} }\\
    &\leq\Pro[a]{\exists k\leq m_0, p_{\paren[a]{k:\cH_0}}\leq  \frac{\alpha k}{m_0} }\\
    &\leq\alpha,
\end{align*}
where the last inequality is obtained by applying \autoref{Simesin} to the random variables $(p_i)_{i\in\cH_0}$.

\subsection{Proof of \autoref{prop_concen_topk_hmogeneous}}\label{proof_prop_concen_topk_hmogeneous}

We fix a $\delta \in (0,1]$, and a $\mu\in\statfam$. The proof is exactly the same as in \citet{MBR2024}: we apply Propositions \ref{DKWin}, \ref{Wellnerin} and \ref{Simesin_var} to $N=m_0(\mu)$ and $\set{U_1,\dotsc,U_N}=\set{p_i, i\in \cH_0(\mu)}$. We then use monotonicity properties and the fact that $m_0(\mu)\leq m$ to replace $m_0(\mu)$ by  $m$ in the obtained inequalities. Note that for Wellner's inequality, the desired monotonicity property is the last statement of Lemma 43 of \citet{MBR2024}.

\subsection{Proof of \autoref{prop_algo_topk_path}}\label{proof_prop_algo_topk_path}

First note that, for any $1\leq k\leq m$, by nestedness of the top-$k$ reference family:
\begin{align*}
    \hat V^{\JER}_\Rfam(R_k)&=\min_{1\leq i\leq m}\paren[a]{\zeta_i+\abs[a]{R_k\setminus R_i}}\wedge k, \text{ by \eqref{eq_vbar}};\\
    &=\min_{1\leq i\leq m}\paren[a]{\zeta_i+0\vee (k-i)}\wedge k, \text{ using \eqref{eq_def_Rk_top_k}}.
\end{align*}
By assumption on $f$, the sequence $\paren[a]{\zeta_k}_{1\leq k\leq m}$ is nondecreasing, so for all $i\geq k$,
\begin{align*}
    \zeta_i+0\vee (k-i)&=\zeta_i 
\geq \zeta_k 
= \zeta_k+0\vee (k-k),
\end{align*}
and so
\begin{equation}\label{eq_vstar_algo_1}
    \hat V^{\JER}_\Rfam(R_k)=\min_{1\leq i\leq k}\paren[a]{\zeta_i+ k-i}\wedge k.
\end{equation}
Second, we show that, for all $0\leq k\leq m$, 
\begin{equation}\label{eq_vstar_algo_2}
    \hat V^{\JER}_\Rfam(R_{k+1})=\min\paren[a]{\hat V^{\JER}_\Rfam(R_k)+1,\zeta_{k+1}},
\end{equation}
where $R_0=\varnothing,\hat V^{\JER}_\Rfam(R_0)=0$. Indeed, if $k=0$, 
\[\min\paren[a]{\hat V^{\JER}_\Rfam(R_0)+1,\zeta_{0+1}}=\min(0+1,\zeta_1),\] 
which coincides with \eqref{eq_vstar_algo_1} for $k=1$.
And if $k\geq1$, we have
\begin{align*}
    \hat V^{\JER}_\Rfam(R_{k+1})&=\min_{1\leq i\leq k+1}\paren[a]{\zeta_i+k+1-i}\wedge (k+1), \text{ by \eqref{eq_vstar_algo_1}},\\
    &=\paren[a]{\min_{1\leq i\leq k}\paren[a]{\zeta_i+k+1-i}}\wedge(k+1)\wedge\zeta_{k+1}\\
    &=\paren[a]{\paren[a]{\min_{1\leq i\leq k}\paren[a]{\zeta_i+ k-i}\wedge k}+1}\wedge\zeta_{k+1}\\
    &=\min\paren[a]{\hat V^{\JER}_\Rfam(R_k)+1,\zeta_{k+1}},\text{ again by \eqref{eq_vstar_algo_1}},
\end{align*}
which concludes the proof that \eqref{eq_vstar_algo_2} holds. 

With that, we can now prove that $\tilde\zeta_k=\hat V^{\JER}_\Rfam(R_k)$ for all $0\leq k\leq m$ by an induction scheme going over the integers visited by the \texttt{while} loop of \algoref{algo_topk_path}. First
define formally $\tilde\zeta_0 = \hat V^{\JER}_\Rfam(R_0)=0$. Now let $k$ be an integer visited by the \texttt{while} loop and assume that $\hat V^{\JER}_\Rfam(R_i)=\tilde\zeta_i$ for all $i\leq k-1$. We distinguish the two cases $\zeta_k>\tilde\zeta_{k-1}$ and $\zeta_k\leq \tilde\zeta_{k-1}$ as in the \texttt{if} condition of \algoref{algo_topk_path}.

First assume that $\zeta_k>\tilde\zeta_{k-1}$, let $j = \paren{\zeta_k - \tilde{\zeta}_{k-1}}\wedge \paren{m-k+1}$, so that $\zeta_k\geq\tilde{\zeta}_{k-1}+j$ and the next integer visited by the loop is $k+j$ (note that $j\geq1$). In that case, then, we aim to prove that $\hat V^{\JER}_\Rfam(R_i)=\tilde\zeta_i$ for all $i\leq k+j-1$. By definition, $\tilde{\zeta}_{k+i-1}=\tilde{\zeta}_{k-1}+i$ for all $i\in\llbracket 1,j\rrbracket$. Note that, for each such $i$,
\begin{equation}\label{eq_ineq_zeta_algo}
\zeta_{k+i-1}\geq\zeta_k\geq  \tilde{\zeta}_{k-1}+j\geq\tilde{\zeta}_{k-1}+i=\tilde{\zeta}_{k+i-1}.\end{equation}
To conclude this first case, we show by recursion that $\hat V^{\JER}_\Rfam(R_{k+i-1})=\tilde\zeta_{k+i-1}$ for each $i\in\llbracket 0,j\rrbracket$. For $i=0$, this is already true by the induction assumption. Now, if $\hat V^{\JER}_\Rfam(R_{k+i-1})=\tilde\zeta_{k+i-1}$ for some $i\in\llbracket 0,j-1\rrbracket$, then
\begin{align*}
    \hat V^{\JER}_\Rfam(R_{k+i})&=\paren[a]{\hat V^{\JER}_\Rfam(R_{k+i-1})+1}\wedge\zeta_{k+i},\text{ by \eqref{eq_vstar_algo_2}},\\
    &=(\tilde{\zeta}_{k+i-1}+1)\wedge\zeta_{k+i}\\
    &=(\tilde{\zeta}_{k-1}+i+1)\wedge\zeta_{k+i}\\
    &=\tilde{\zeta}_{k+i}\wedge\zeta_{k+i}=\tilde{\zeta}_{k+i},\text{ by \eqref{eq_ineq_zeta_algo}.}
\end{align*}

In the second case, assume that $\zeta_k\leq\tilde\zeta_{k-1}$, hence $\tilde{\zeta}_{k}=\tilde{\zeta}_{k-1}$ by construction, and the next integer visited by the loop is $k+1$. In that case, then, we aim to prove that $\hat V^{\JER}_\Rfam(R_i)=\tilde\zeta_i$ for all $i\leq k$. First note that
\begin{align*}
    \tilde\zeta_{k-1}&=\hat V^{\JER}_\Rfam(R_{k-1})\\
    &=\max_{A\in\Afam(\Rfam)}\abs[a]{A\cap R_{k-1}}\\
    &\leq \zeta_{k-1}\text{ because }A\in\Afam(\Rfam)\text{ implies }\abs[a]{A\cap R_{k-1}}\leq \zeta_{k-1}\\
    &\leq \zeta_{k},
\end{align*}
so actually $\zeta_k=\tilde\zeta_{k-1}$. To conclude this second case, we just need to show that $\hat V^{\JER}_\Rfam(R_k)=\tilde{\zeta}_{k}$. Indeed,
\begin{align*}
    \hat V^{\JER}_\Rfam(R_k)&=\paren[a]{\hat V^{\JER}_\Rfam(R_{k-1})+1}\wedge\zeta_k, \text{ by \eqref{eq_vstar_algo_2}},\\
    &=(\hat V^{\JER}_\Rfam(R_{k-1})+1)\wedge\tilde{\zeta}_{k-1}\\
    &=(\tilde{\zeta}_{k-1}+1)\wedge\tilde{\zeta}_{k-1},\text{ by the induction assumption},\\
    &=\tilde{\zeta}_{k-1}\\
    &=\tilde{\zeta}_{k}.
\end{align*}

\subsection{Proof of \autoref{goemannloct}}\label{proof_goemannloct}

Suppose that $\max\limits_{i \in \llbracket 1, |A|\rrbracket}\mathds{1}{\set{p_{(i:A)}\leq \ell_{i:|A|}}} = 1$. Then there is some $i$ such that $p_{(i:A)}\leq \ell_{i:|A|}$. Denote $j = i_A\paren{p_{(i:A)}}\geq i$, we have $ p_{(i:A)} \leq \ell_{i:|A|} \leq \ell_{j:|A|}$, thus there is some $t$ such that $\indev{t\leq  \ell _{i_A(t):|A|}}=1$.\\
Suppose now that $\max\limits_{t \in [0,1] } \indev{t\leq  \ell _{i_A(t):|A|}} =1$. Then there is $ t\in [0,1]$ such that $t\leq \ell _{i_A(t):|A|}$, thus $p_{\paren{i_A(t):A}}\leq t\leq \ell _{i_A(t):|A|}$.

\subsection{Proof of \autoref{SChetero}}\label{proof_SChetero}

For $n \in \llbracket 0 , |S|\rrbracket$, denote $\cN_S^n =  \set[1]{A\in \cN_\varphi \text{ : } |A\cap S| = n  } $, where $\cN_\varphi = \set{A \subseteq \Nm : \varphi_A = 0}$. Then, $\hat{V}^{\IP}_{\varphi} (S) = \max\limits_{A\in \cN} |A \cap S|= \max\set{n\in \NS\text{ : }\cN_S^n \not = \varnothing} $.\\
Let $\varphi_{S,n} = \min\limits_{\substack{A \subseteq \Nm\\ |A\cap S| = n}}\varphi_{A}  $ be the test statistic testing whether $\cN_S^n$ is empty or not; indeed, $\varphi_{S,n} = 0$, if and only if there is some $A \subseteq \Nm$ such that $|A\cap S| = n$ and $\varphi_A = 0$.\\
For all $t\in [0,1]$ and $A \subseteq \Nm,  \varphi_{A,t} \leq \varphi_{A}$, thus, take consecutively the minimum over $A \subseteq \Nm$ such that  $|A\cap S| = n$ and the maximum over $t\in [0,1]$ in this inequality to obtain: 
\[\varphi_{S,n} \geq \max\limits_{t \in [0,1]} \varphi_{S,n,t}.\] 
Thus, $\varphi_{S,n}=0$ implies $\max\limits_{t \in [0,1]} \varphi_{S,n,t}  = 0$, thus \[ \set{n\in \NS:\varphi_{S,n} =0} \subseteq \set{n\in \NS:\max\limits_{t \in [0,1]} \varphi_{S,n,t} =0},\] which implies the first inequality.
For all $t\in [0,1]$, by definition, \[\max\limits_{t \in [0,1]} \varphi_{S,n,t}\geq \varphi_{S,n,t},\] thus $ \set{n\in \NS:\max\limits_{t \in [0,1]} \varphi_{S,n,t} =0} \subseteq \set{n\in \NS:\varphi_{S,n,t} =0}$, implying \[\max\set{n\in \NS:\max\limits_{t \in [0,1]} \varphi_{S,n,t} =0} \leq \max \set{n\in \NS:\varphi_{S,n,t} =0}.\] Take the minimum over $t$ in this inequality to get the second inequality.

\subsection{Proof of \autoref{continuoustodiscrete}}\label{proof_continuoustodiscrete}

\begin{itemize}
\item The first equality is straightforward ; if $\varphi_A = 1$ then there is some $t\in [0,1]$ such that $\varphi_{A,t} = 1$. It follows by assumption that $\varphi_{A,p_{\paren{i_A(t):A}}} = 1$.
\item To show the second equality, suppose that $\max\limits_{t \in [0,1]} \varphi_{S,n,t} = 1$. Then there is some $t\in  [0,1]$ such that $\varphi_{S,n,t} = 1$. Thus, for all $A\subseteq \Nm$ such that $|A\cap S| = n$, $\varphi_{A,t} =1$, followed, by assumption, by $\varphi_{A,s} = 1$ for all $s\in \left[p_{\paren{i_A(t):A}},t\right]$. Hence, $\varphi_{A,p_{\paren{i(t)}}}=1$ with $s = p_{\paren{i(t)}}\in \left[p_{\paren{i_A(t):A}},t\right]$ for all $A\subseteq \Nm$.
\item Finally, to show the last equality, we get from the above item that if $n \in \NS$ is such that $ \min\limits_{\substack{A \subseteq \Nm\\ |A\cap S| = n}}\varphi_{A,p_{(i)}} =0$, then for all $t\in \left[p_{(i)},p_{(i+1)}\right)$, $\varphi_{S,n,t} =0$, hence, for all $t\in \left[p_{(i)},p_{(i+1)}\right)$, \[\max\set{n \in \NS : \min\limits_{\substack{A \subseteq \Nm\\ |A\cap S| = n}}\varphi_{A,p_{(i)}} = 0} \leq \max\set{n \in \NS : \varphi_{S,n,t} = 0}.\]
\end{itemize}

\subsection{Proof of \autoref{SC1eqSC2}}\label{proof_SC1eqSC2}

\begin{itemize}
\item $\hat{V}^{\SCo}_{\varphi} (S) \leq \hat{V}^{\SCt}_{\varphi} (S)$  by \autoref{SChetero}.
\item Let $k_0 =  \hat{V}^{\SCt}_{\varphi} (S)$. Then, for all $t \in [0,1]$, 
for all $n \leq k_0$,  $\varphi_{S,n,t}=0$ hence, for all $n \leq k_0$, $\max\limits_{t \in [0,1]} \varphi_{S,n,t}  = 0$. Finally, $\hat{V}^{\SCo}_{\varphi} (S) = \max\set{n\in \NS:\max\limits_{t \in [0,1]} \varphi_{S,n,t} =0} \geq k_0$.
\end{itemize}

\subsection{Proof of \autoref{Simeslikemin}}\label{proof_Simeslikemin}

Let $t\in [0,1]$, $n\in \NS$ and $A \subseteq \Nm$ such that $\abs{A \cap S} = n$. Let $k =i_{A\cap S^c}(t)$. Then $\indev{t \leq \ell_{i_A(t):|A|}} = \indev{t \leq \ell_{\paren{i_{A\cap S}(t)+k}:(n+k+ \abs{A \cap S^c \cap R(t)^c})}} $ is minimal if $S^c \cap R(t)^c \subseteq A$ and $ S \cap R(t)^c \subseteq A $ (if $n$ is large enough), thus \[\min\limits_{\substack{A \subseteq \Nm\\ |A\cap S| = n, i_{A\cap S^c}(t) = k}}\varphi_{A,t}  = \indev{t \leq \ell_{\paren{\paren{n + i_S(t)-|S|}\vee 0+k}:(n+k+ \abs{S^c \cap R(t)^c})}} .\]

\subsection{Proof of \autoref{SimesVSC1equalsVSC2}}\label{proof_SimesVSC1equalsVSC2}

Let $n \in \Nm$ and $i\leq n$. Then \begin{align*}
\ell_{i+1:n+1} - \ell_{i:n} & = \paren{\frac{1}{n+1} - \frac{1}{n}}i\alpha + \frac{1}{n+1}\alpha\\
& = \frac{-i}{n(n+1)}\alpha + \frac{1}{n+1}\alpha\\
& \geq 0.
\end{align*}
We then compute $ b(t) = \max\set{n \in \Nm : \varphi_{\Nm,n,t} = 0}$ for all $t\in (0,1]$. By \autoref{minmaxsimes}, for all $t\in(0,\alpha)$, $ b(t) = \max\set{n \in \Nm : t > \frac{n + i(t)-m}{n}\alpha } = \max\set{n \in \Nm : n < \frac{m-i(t)}{1-t/\alpha} }$, therefore $b(t) = \paren{\left\lceil\frac{m - i(t)}{1- t/\alpha} \right\rceil - 1}\wedge m$. For $t\in [\alpha,1]$ and $n\in \Nm$, $\varphi_{\Nm,n,t} =0$ thus $b(t) = m$, and for $t=0$, $b(0) =m- i(0)$.

\subsection{Proof of \autoref{eqJERIP}}\label{proof_eqJERIP}

We first show the control of the joint error rate: \begin{align*}
\JER(\Rfam) &= \Pro[1]{\exists k\in \llbracket 1, \mzsc + 1\rrbracket : V\paren{R_k} \geq k}\\
& = \Pro[2]{\paren[1]{\exists k\in \llbracket 1, m_0\wedge \mzsc\rrbracket : p_{\paren{k:\cH_0}} \leq \ell_{k:\mzsc}}\cup \paren{ m_0 >\mzsc}}\\
&\leq \Pro[2]{\paren[1]{\exists k\in \llbracket 1, m_0\rrbracket : p_{\paren{k:\cH_0}} \leq \ell_{k:m_0}}\cup \paren{ m_0 >\mzsc}}= \Pro{\varphi_{\cH_0} = 1} \\
 &\leq \alpha.
\end{align*} Then, recall that for all $S\subseteq \Nm$, $\hat{V}^{\JER}_\Rfam(S) = \max\limits_{A \in \cA(\Rfam)} \abs{A\cap S}$, where \[\cA(\Rfam) = \set{A\subseteq \Nm : \forall k \in \llbracket 1, \mzsc+1\rrbracket,  \abs{A\cap R_k}\leq k-1}.\]
\begin{itemize}
\item Let $A \in \cN_\varphi$, then $\varphi_A = 0$ thus for all $i \in \llbracket 1, |A| \rrbracket$, $p_{(i:A)} > \ell_{i:|A|}$ \textit{i.e.} $p_{[i:A]}>\ell_{(|A|-i+1):|A|}$ therefore, by definition of $\mzsc$, $|A|\leq \mzsc$ thus $|A\cap R_{\mzsc +1}|\leq \mzsc$.\\
Let $i \in \llbracket 1, \mzsc\rrbracket$, $|A\cap R_i|\leq i-1$ because $p_{(i:A)} > \ell_{i:|A|} \geq \ell_{i:\mzsc}$ thus $\cN_\varphi \subseteq \cA(\Rfam)$, therefore \[\hat{V}^{\IP}_{\varphi} \leq \hat{V}^{\JER}_\Rfam.\]

\item Let $S \subseteq \Nm$. If  $\hat{V}^{\JER}_\Rfam(S)  = \min\limits_{1\leq u\leq \mzsc}\set{\abs[1]{\set{i\in S : p_i> \ell_{u:\mzsc}}}+u-1} \wedge \mzsc = \mzsc$, for all $u\in \llbracket 1, \mzsc \rrbracket$, $\abs[1]{\set{i\in S : p_i> \ell_{u:\mzsc}}} \geq \mzsc-u+1$, therefore we can recursively construct $i_{\mzsc}, \dots, i_{1}$ such that $p_{i_{\mzsc -u+1}}$ is maximal for $i_{\mzsc -u+1} \in \set{i\in S : p_i> \ell_{u:\mzsc}}\setminus\set{i_{\mzsc },\dots,i_{\mzsc -u+2}}$. Thus, $A = \set{i_1,\dots,i_{\mzsc}}$ is such that $A \in \cN_\varphi$ and $|A\cap S| = \mzsc$. 
  
Assume now that $\min\limits_{1\leq u\leq \mzsc}\set{\abs[1]{\set{i\in S : p_i> \ell_{u:\mzsc}}}+u-1} \wedge \mzsc < \mzsc$, and let $u_0$ be minimal such that $v_0 =\abs[1]{\set{i\in S : p_i> \ell_{u_0:\mzsc}}}+u_0-1$ is equal to \[\min\limits_{1\leq u\leq \mzsc}\set{\abs[1]{\set{i\in S : p_i> \ell_{u:\mzsc}}}+u-1}.\] The idea from this part of the proof will be to construct a set $A \in \cN_\varphi$ of cardinality $\mzsc$ such that $|A\cap S| = v_0$. From minimality property of $u_0$ and $v_0$, we get: 
\begin{itemize}

\item $\forall u \leq u_0 -1, u_0 -u < \abs{S \cap R\paren{\ell_{u,\mzsc}}^c \cap R\paren{\ell_{u_0,\mzsc}}}$,
\item $\forall u \geq u_0, u - u_0 \geq\abs{S \cap R\paren{\ell_{u,\mzsc}} \cap R\paren{\ell_{u_0,\mzsc}}^c}$ $(\star)$.
\end{itemize}
By the first item, there exists indices $i_1, i_2,\dots, i_{u_0-1}\in S$ such that $p_{i_1}\leq p_{i_2}\leq \dots\leq  p_{i_{u_0-1}}$ and $\ell_{u_0,\mzsc}\geq p_{i_u} >\ell_{u,\mzsc}$ for all $u \leq u_0-1$. Let $B$ be the set of indices matching with the $\mzsc - v_0$ greatest $p$-values from $\set{i\in S^c : p_i> \ell_{u_0:\mzsc}}$ and define $A = \set{i_1,\dots, i_{u_0-1}} \cup \set{i\in S : p_i> \ell_{u_0:\mzsc}} \cup B$. Then, for $j \in \llbracket 1 ,\mzsc \rrbracket$: 
\begin{itemize}
\item If $j \leq u_0-1,$ $p_{(j:A)} = p_{i_j} > \ell_{j:\mzsc}$.
\item If $j \geq u_0$ is such that $R\paren{p_{(j:A)}}^c \subseteq A$ 
, then let $k=\abs{R\paren{p_{(j:A)}}^c}\leq \mzsc -j$, and by definition of $\mzsc $, $p_{(j:A)}  = p_{\brac{k+1}}> \ell_{\mzsc - k: \mzsc}\geq \ell_{j: \mzsc}$.
\item Else $\set{i\in A : \ell_{u_0:\mzsc} < p_i \leq p_{(j:A)}} \subseteq S$, because if $i \in \set{i\in A : \ell_{u_0:\mzsc} < p_i \leq p_{(j:A)}}$ verifies $i\in B$ although $R\paren{p_{(j:A)}}^c \not\subseteq A$, there exists $i' \in A^c$ such that $p_i< p_{i'}$ which contradicts $B$'s definition. Thus $j - u_0 \geq \abs[2]{\set{i\in A : \ell_{u_0:\mzsc} < p_i \leq \ell_{j:\mzsc} }}$ by $(\star)$. Furthermore, $\abs[2]{\set{i\in A : \ell_{u_0:\mzsc} < p_i \leq p_{(j:A)}}}\geq j - u_0 +1$ because 
$\abs[2]{\set{i\in A : p_i \leq \ell_{u_0:\mzsc} }} = u_0-1$, therefore $p_{(j:A)} > \ell_{j : \mzsc}$.
\end{itemize}
Thus, $A \in \mathcal{N}^{\IP}_{\varphi}$ and, since $B\cap S = \varnothing$,  $\abs{S\cap A} = v_0= \min\limits_{1\leq u\leq \mzsc}\set{\abs[1]{\set{i\in S : p_i> \ell_{u:\mzsc}}}+u-1}$, hence \[\hat{V}^{\IP}_\varphi(S) \geq \hat{V}^{\JER}_\Rfam(S) .\]
\end{itemize}

\rem{One interesting by-product from this proof is that the inversion procedure bound is always attained for a set of cardinality $\mzsc$. }

\subsection{Proof of \autoref{SCfuncversion}}\label{proof_SCfuncversion}

If $f_n$ is nondecreasing, then the assumption from \autoref{continuoustodiscrete} is fulfilled, and the expression of $\varphi_A$ follows.\\
Assume now that $f_n$ is right continuous for all $n\in \Nm$ and $\paren{f_n}$ is nonincreasing. Then, define the generalized inverse of $f_n$ by: \[\forall t \in [0,1], f_n^{-1}(t) = \min\set{x\in \RR : f_n(x) \geq t},\] and note that $\varphi_A = \max\limits_{j \in \llbracket 1, |A| \rrbracket}\indev{ f_{n}^{-1} \paren{j}> p_{(j:A)}}$, and that $\paren[1]{f_{n}^{-1} \paren{j}}$ is nondecreasing with respect to $j$ and nonincreasing with respect to $n$. Thus, by letting $\ell_{j:n}=f_{n}^{-1} \paren{j}$, we can apply the version of \autoref{SCgoemann} suited to local tests defined by strict inequalities, and we get
\begin{align*}
\hat{V}^{\IP}_{\varphi}(S) &=\min\limits_{1\leq u\leq |S|}\set{\abs[1]{\set{i\in S : p_i \geq f^{-1}_{\hat{m}_0}\paren{u}}}+u-1}\\
&=\min\limits_{1\leq u\leq |S|}\set{\abs[1]{\set{i\in S : f_{\hat{m}_0}\paren{p_i} \geq u}}+u-1}\\
&= \min\limits_{0\leq k\leq |S|}\set{\abs[1]{\set{i\in S : f_{\hat{m}_0}\paren{p_i} \geq \floor{f_{\hat{m}_0}\paren{p_{(k : S)}}}+1}}+\floor{f_{\hat{m}_0}\paren{p_{(k : S)}}}}\\
&= \min\limits_{0\leq k\leq |S|} \psi(k)  ,\\
\end{align*} 
with 
$\hat{m}_0$ as defined in \equaref{m0perier} and $\psi(k) = \abs[1]{\set{i\in S : f_{\hat{m}_0}\paren{p_i} \geq \floor{f_{\hat{m}_0}\paren{p_{(k : S)}}}+1}}+\floor{f_{\hat{m}_0}\paren{p_{(k : S)}}}$ for all $0\leq k\leq \abs{S}$
, where the third equality holds because $\abs[1]{\set{i\in S : f_{\hat{m}_0}\paren{p_i} \geq u}}+u-1$ is nondecreasing with respect to $u$ on $\left\llbracket  \floor{f_{\hat{m}_0}\paren{p_{(k : S)}}} +1, \floor{f_{\hat{m}_0}\paren{p_{(k +1 : S)}}} \right\rrbracket$ when $ \floor{f_{\hat{m}_0}\paren{p_{(k : S)}}}\not =  \floor{f_{\hat{m}_0}\paren{p_{(k+1 : S)}}}$.

We now finally prove \equaref{eq_VIP_top_k_view}.
For all $k \in \llbracket 0, \abs{S}\rrbracket$, let 
\[k^*=\max\set{\ell \in \llbracket 0, \abs{S}\rrbracket: \floor{f_{\hat{m}_0}\paren{p_{(\ell : S)}}} = \floor{f_{\hat{m}_0}\paren{p_{(k : S)}}}}.\] 
Notably, $k^*\geq k$ and $\psi(k^*)=\psi(k)$. By construction and by $f_{\hat{m}_0}$ being nondecreasing, for all $\ell \in \llbracket 0, \abs{S}\rrbracket$,
\begin{align*}
    \ell > k^* &\Leftrightarrow  \floor{f_{\hat{m}_0}\paren{p_{(\ell : S)}}} \geq \floor{f_{\hat{m}_0}\paren{p_{(k : S)}}} +1\\
    &\Leftrightarrow f_{\hat{m}_0}\paren{p_{(\ell : S)}} \geq \floor{f_{\hat{m}_0}\paren{p_{(k : S)}}} +1.
\end{align*}
So, 
\begin{align*}
\set{i\in S:f_{\hat{m}_0}\paren{p_i} \geq \floor{f_{\hat{m}_0}\paren{p_{(k : S)}}}+1}&=\set{i\in S: p_i=p_{(\ell:S)} \text{ for some }\ell >k^*}\\
&=\set{i\in S: p_i>p_{(k^*:S)} }\\
&= S\setminus R\paren{p_{(k^*:S)}},
\end{align*}
where the second inequality holds because if $p_i>p_{(k^*:S)}$, let $\ell$ such that $p_i=p_{(\ell:S)}$, necessarily $\ell > k^*$, and reciprocally if $p_i=p_{(\ell:S)} $ with $\ell > k^*$, then $\floor{f_{\hat{m}_0}\paren{p_{(\ell : S)}}} > \floor{f_{\hat{m}_0}\paren{p_{(k : S)}}} =\floor{f_{\hat{m}_0}\paren{p_{(k^* : S)}}} $ and so  $p_i>p_{(k^* : S)}$ again by monotonicity.\\
Thus,
\begin{align*}
\psi(k)&=\psi\paren{k^*}\\
&=\floor{f_{\hat{m}_0}\paren{p_{(k^* : S)}}} + \abs{S\setminus R\paren{p_{(k^*:S)}}}\\
&\leq \floor{f_{\hat{m}_0}\paren{p_{(k : S)}}} + \abs{S\setminus R\paren{p_{(k:S)}}},
\end{align*}
where the last inequality holds because $k^*\geq k$, hence $S\setminus R\paren{p_{(k^*:S)}}\subseteq S\setminus R\paren{p_{(k:S)}}$.

Finally, let $k_0\in \argmin\limits_{0\leq k\leq |S|} \psi(k)$, for all $k \in \llbracket 0, \abs{S}\rrbracket$ we have:
\begin{align*}
\floor{f_{\hat{m}_0}\paren{p_{(k_0^* : S)}}} + \abs{S\setminus R\paren{p_{(k_0^*:S)}}}&=\psi(k_0^*)=\psi(k_0) = \hat{V}^{\IP}_{\varphi}(S)\\
&\leq \psi(k)\leq \floor{f_{\hat{m}_0}\paren{p_{(k : S)}}} + \abs{S\setminus R\paren{p_{(k:S)}}},
\end{align*}
which entails \equaref{eq_VIP_top_k_view}.

\reml{The proposition and its proof prove that local tests of the form \eqref{eq:topklocaltests} can be rewritten in the form \eqref{locform} by letting $\ell_{i:n}=f^{-1}_{n}(i) $. Furthermore, the $\hat m_0$ given by \equaref{m0perier} is then exactly the $\hat m_0$ given by \equaref{m0goemann}.}{rem_topk_local_test_to_homo_simes}

\subsection{Proofs of \autoref{DKWestimator} and \autoref{DKWtopkopt}}\label{proof_DKWtopkopt}

For all $n \in \Nm$ $f_n$ is nondecreasing and $\paren{f_n}_{n\in \Nm}$ is nonincreasing.
Applying \autoref{SCfuncversion} and \autoref{scgoemanproof2}, for all $S\subseteq\Nm$, \[\hat{V}^{\IP}_\varphi\paren{S} = \min\limits_{0\leq k\leq |S|}\paren[a]{\floor{f_{\mzsc}\paren{p_{(k : S)}}} + \abs{S\setminus R\paren{p_{(k:S)}}}} ,\] with \[\mzsc  = \max \set[2]{n\in \llbracket0,m\rrbracket : \forall t \in [0,1], f_{n}\paren{t}\geq n+i(t)-m}. \]
For all $n \in \NM $, all $t\in [0,1]$ such that $\varphi_{\Nm,n,t} = \indev{f_n(t) < n + i(t) -m }= 1$, \begin{align*}
 f_{n+1}\paren{t} &= \paren{n+1}t + \sqrt{n+1}\lambda_\alpha \\
 &= f_n(t) + t + \paren{\sqrt{n+1}-\sqrt{n}}\lambda_\alpha\\
 &< n + i(t)-m + 1 + \frac{i(t)}{n} - \frac{m}{n}-\paren{\sqrt{n}+\frac{1}{\sqrt{n}}-\sqrt{n+1}}\lambda_\alpha\\
 &< n+1+i(t)-m,
\end{align*}
thus $\varphi_{\Nm,n+1,t} =1$. Applying \autoref{SC1eqSC2}, \[\mzsc =\min_{t\in [0,1]} \max \set[2]{n\in \llbracket0,m\rrbracket : f_{n}\paren{t}\geq n +i(t) -m},\] and the expression \eqref{eq:DKWhatm0nonlocal} follows by studying the roots of a second-degree polynomial in $\sqrt{n}$.

\subsection{Proof of \autoref{bretin}}\label{proof_bretin}

Let $\bar{J}:x\mapsto\frac1N\sum_{i=1}^NJ_i(x)$. We first prove that, for all $\lambda \geq 0$, 
\begin{equation}\label{eq_bretagnolle}
\Pro{\sqrt{N}\sup\limits_{t\in\mathbb{R}}\paren{\frac1N\sum_{i=1}^N\indev{X_i\leq t}- \bar{J}(t)}> \lambda}\leq e \cdot \exp\left(-2\lambda^2\right).
\end{equation}
From \citet{shorack2009empirical}, we know that \eqref{eq_bretagnolle} holds, with $ce \cdot \exp\left(-2\lambda^2\right)$ as the right-hand side of the inequation, for any constant $c$ such that for all $\nu \geq 0$, 
\begin{equation}\label{eq_prerequisite_bretagnolle}
\Pro{\sqrt{N}\sup\limits_{t\in [0,1]}\paren{\frac1N\sum_{i=1}^N\indev{U_i\leq t} - t}> \nu}\leq c\exp\left(-2\nu^2\right),
\end{equation}
with $U_1,\dots,U_N$ independent, uniform, random variables. \citet{reeve2024short} established
that~\eqref{eq_prerequisite_bretagnolle} holds with $c=1$.
This gives \eqref{eq_bretagnolle}. Equation \eqref{eq_bretagnolle2} follows immediately from \eqref{eq_bretagnolle} by a simple inclusion of events.

Note that, compared to \eqref{eq_bretagnolle}, \eqref{eq_bretagnolle2} is suited to the general case of \autoref{ass_hetero}, where the exact cdf of the $p$-values under the null is not necessarily known. 

\rem{
\citet{massart1990tight} showed that \eqref{eq_prerequisite_bretagnolle} holds with $c=1$ but only for $\nu\geq \sqrt{\frac{\log 2}{2}}$, which does not allow the proof of \citet{shorack2009empirical} to use $c=1$ and get \eqref{eq_bretagnolle}, as that proof really requires \eqref{eq_prerequisite_bretagnolle} to be true for all $\nu\geq0$. Prior to the work of \citet{reeve2024short}, we could have used $c=2$, as \eqref{eq_prerequisite_bretagnolle} holds for $c=2$ and any $\nu\geq0$, by the two-sided version of the DKW inequality proven by \citet{massart1990tight}. Hence, we divide the upper bound on the probability by a factor 2 thanks to very recent works.
}

\subsection{Proof of \autoref{cor_local_bretagnolle}}\label{proof_cor_local_bretagnolle}

Let $\mu\in H_A$. Equivalently, $A\subseteq\cH_0$, so for each $i\in A$, the cdf of $p_i$ is bounded by $F_i$, by \autoref{ass_hetero}. Applying \eqref{eq_bretagnolle2} to $(p_i)_{i\in A}$ and $\tilde{\lambda}_\alpha$, it comes that:
\begin{align*}
\alpha=e \cdot \exp\left(-2\tilde{\lambda}_\alpha^2\right)&\geq \Proo{\sqrt{|A|}\sup\limits_{t\in\mathbb{R}}\paren{\frac1{|A|}\sum_{i\in A}\indev{p_i\leq t}- \frac1{|A|}\sum_{i\in A}F_i(t)}> \tilde{\lambda}_\alpha}{\mu}\\
&=\Proo{\sup\limits_{t\in\mathbb{R}}\paren{\sum_{i\in A}\indev{p_i\leq t}- \sum_{i\in A}F_i(t)}> \tilde{\lambda}_\alpha\sqrt{|A|}}{\mu}\\
&=\Proo{\exists{t\in\mathbb{R}}:{\sum\limits_{i\in A}\mathbb{G}_i(t)}> \tilde{\lambda}_\alpha\sqrt{|A|}}{\mu}\\
&=\Proo{\varphi_A=1}{\mu}.
\end{align*}
This proves that $\varphi_A$ is indeed a local test at level $\alpha$.

\subsection{Proof of \autoref{Bretmaxmin}}\label{proof_Bretmaxmin}

Let $t\in (0,1]$. Then to minimize $\varphi_{A,t}$ with respect to $A \subseteq \Nm$ such that $ |A\cap S| = n$, we let $|A| = n + j$ and then minimize  $\sum\limits_{k\in A}\mathbb{G}_{k}(t)= \sum\limits_{k\in A\cap S}\mathbb{G}_{k}(t)+\sum\limits_{k\in A\cap S^c}\mathbb{G}_{k}(t)$. The first sum is lower bounded by $\sum\limits_{1\leq k\leq n}\paren[1]{\mathbb{G}(t)}_{(k:S)}$, while the second one is lower bounded by $\sum\limits_{1\leq k\leq j}\paren[1]{\mathbb{G}(t)}_{(k:S^c)}$.

\subsection{Proof of \autoref{adaptivebretin}}\label{proof_adaptivebretin}

By assumption, \[\set[3]{\forall t \in [0,1],  m_0^{-1/2}  \sum_{i \in \cH_0 } \mathbb{G}_i(t) \leq \tilde{\lambda}_\alpha}  \subseteq \set[3]{m_0 \leq \hat{m}_0},\] 
thus, 
\begin{align*}
&\set[3]{\forall t \in [0,1],  m_0^{-1/2}  \sum_{i \in \cH_0 } \mathbb{G}_i(t) \leq \tilde{\lambda}_\alpha} \\
=& \set[3]{\forall t \in [0,1],   \sum_{i \in \cH_0 } \mathds{1}_{p_i \leq t} \leq \sum_{i \in \cH_0 }F_i(t) + \sqrt{m_0}\tilde{\lambda}_\alpha}\bigcap \set[3]{m_0 \leq \hat{m}_0}\\
\subseteq & \set[3]{\forall t \in [0,1],   \sum_{i \in \cH_0 } \mathds{1}_{p_i \leq t} \leq  \sum_{1\le i \le \hat{m}_0 }\paren[1]{\mathbf{F}(t)}_{[i]} + \sqrt{\hat{m}_0}\tilde{\lambda}_\alpha}.
\end{align*}
Then, as $\tilde{\lambda}_\alpha$ verifies the equality $e\cdot\exp\paren{-2\tilde\lambda_{ \alpha}^2} = \alpha$, it follows from \autoref{bretin} that 
\begin{align*}
\Pro{\forall t \in [0,1],   \sum_{i \in \cH_0 } \mathds{1}_{p_i \leq t} \leq  \sum_{1\le i \le \hat{m}_0 }\paren[1]{\mathbf{F}(t)}_{[i]} + \sqrt{\hat{m}_0}\tilde{\lambda}_\alpha} &\geq \Pro{\forall t \in [0,1],  m_0^{-1/2}  \sum_{i \in \cH_0 } \mathbb{G}_i(t) \leq \tilde{\lambda}_\alpha}\\
& \geq 1 -\alpha.
\end{align*}

\subsection{Proof of \autoref{examplesbest}}\label{proof_examplesbest}

Let $U\subseteq V$. The threshold family $\tau$ satisfies \eqref{C1} with respect to $V$, thus, for all $k\in \llbracket 1, |U| \rrbracket $ and $A\subseteq V$ such that $|A| = |V| -k +1$, $\sum\limits_{i \in A} \mathbf{H}\paren{\tau_{|V|},\tau_k} \leq k \alpha$, therefore 
\begin{align*}
\sum_{1\leq i\leq |U|-k+1}\paren[2]{\mathbf{H}\paren{\tau_{|U|},\tau_k}}_{\brac{i:U}} &\leq  \sum_{1\leq i\leq |U|-k+1}\paren[2]{\mathbf{H}\paren{\tau_{|V|},\tau_k}}_{\brac{i:U}} \\
& \leq \sum_{1\leq i\leq |V|-k+1}\paren[2]{\mathbf{H}\paren{\tau_{|V|},\tau_k}}_{\brac{i:V}} \\
& \leq k \alpha,
\end{align*} 
where the first inequality comes because $\tau $ is a nondecreasing family of thresholds and $\mathbf{H}$ is coordinate-wise non decreasing on its first parameter. The second inequality comes because all the terms of the smaller sum are terms of the greater sum. Thus $\tau_k \in \set[2]{t\in \cA, t\leq \lambda : \paren[1]{\mathbf{H}(\lambda,t)}_{\brac{1:U}} + \dots + \paren[1]{\mathbf{H}(\lambda,t)}_{\brac{|U|-k+1:U} }\leq k\alpha}$
therefore $\tau_k \leq \tilde{\tau}_k$.\\
Note that the previous sequence of inequalities with $k = |U|$ grants that $\tau_{|U|}$ verifies \eqref{eq_lambda_U}.
Thus $\tilde{\tau}$ satisfies \eqref{C1} at level $\alpha$ with respect to $U$, and for all $k\in \Nm$, $\tau_k\leq \tilde{\tau}_k$, therefore $\tau$ satisfies \eqref{C1} at level $\alpha$ with respect to $U$.

\subsection{Proof of \autoref{DDRlocaltest}}\label{proof_DDRlocaltest}

Let $\tilde{\tau} $ be defined by \autoref{thresholds_ddr2} with $U =A$. Then, $\tilde{\tau}$ satisfies \eqref{C1} at level $\alpha$ with respect to $A$, thus
by \autoref{thm_hetero_simes}, $\mathbb{P}\paren{ \exists i \in \llbracket 1,|A| \rrbracket \text{, } p_{(i : A)} \leq \tilde{\tau}_i} \leq \alpha$, and \eqref{localDDRorderstat} arises from the following equality:
\[\set{p_{(i:A)}\leq \tilde{\tau}_i} = \set{\sum_{\ell = 1}^{ k-i+1} \paren[2]{\mathbf{H}\paren[1]{\lambda,p_{(i:A)}}}_{[\ell:A]} \leq i\alpha}\bigcap\set{p_{(i:A)}\leq \lambda}, \]
thus $\varphi_A$ is indeed a local test of level $\alpha$.\\
We now prove that expressions \eqref{localDDRorderstat} and \eqref{localDDRt} are equal. Suppose that expression \eqref{localDDRorderstat} is equal to 1. Then, there exists $i\in \Nk$ such that $\sum\limits_{\ell = 1}^{ k-i+1} \paren[2]{\mathbf{H}\paren[1]{\lambda,p_{(i:A)}}}_{[\ell:A]} \leq i\alpha$ and $p_{(i:A)}\leq \lambda$.\\
Let $j = i\paren{p_{(i:A)}}.$ Then $p_{(j:A)} = p_{(i:A)}\leq \lambda$, $j\geq i$, thus $\sum\limits^{k-j+1}_{\ell=1}\paren[2]{\mathbf{H}\paren[1]{\lambda,p_{(j:A)}}}_{[\ell:A]} \leq j\alpha.$ \\
Thus, expression \eqref{localDDRt} is equal to 1, with $t=p_{(j:A)}$.\\
Assume now that expression \eqref{localDDRt} is equal to 1, meaning there exists $t\leq \lambda$ such that $\sum\limits^{k-i_A(t)+1}_{\ell=1}\paren[1]{\mathbf{H}\paren[0]{\lambda,t}}_{[\ell:A]} \leq i_A(t)\alpha$. Then, because $\mathbf{H}\paren[1]{\lambda,\cdot}$ is coordinate-wise nondecreasing and $p_{\paren{i_A(t):A}}\leq t\leq \lambda$, $\sum\limits^{k-i_A(t)+1}_{\ell=1}\paren[2]{\mathbf{H}\paren[1]{\lambda,p_{\paren{i_A(t):A}}}}_{[\ell:A]} \leq i_A(t)\alpha$, thus expression \eqref{localDDRorderstat} is equal to 1.

\subsection{Proof of \autoref{DDRminphi}}\label{proof_DDRminphi}

We show that if $A\subseteq \Nm$ is such that $|A\cap S| = n$ and $\varphi_{A,t} = 0$, then $B = A \cup \paren{R(t)^c\cap S^c}$ is such that $\varphi_{B,t} = 0$.
Assume that $A^c\cap R(t)^c\cap S^c \not = \varnothing$ and let $j\in A^c\cap R(t)^c\cap S^c$ and $\tilde{A} = A\cup\{j\}$. We have $\varphi_{A,t} = 0$, thus \[\displaystyle\sum ^{|A|-i_A(t)+1}_{k=1}\paren[1]{\mathbf{H}\paren[a]{\lambda,t}}_{[k:A]} > i_A(t)\alpha,\] therefore  \[\displaystyle\sum ^{|\tilde{A}|-i_{\tilde{A}}(t)+1}_{k=1}\paren[1]{\mathbf{H}\paren[a]{\lambda,t}}_{[k:\tilde{A}]} > i_{\tilde{A}}(t)\alpha\]  because $i_{\tilde{A}}(t) = i_{A}(t)$ and $|\tilde{A}| = |A| +1$. \\
Thus $\varphi_{S,n,t}$ is achieve on a set $A$ such that $R(t)^c\cap S^c\subseteq A$. \\
We now assume that $A\cap R(t)\cap S^c \not = \varnothing$ and let $j\in A\cap R(t)\cap S^c$ and $\tilde{A} = A\setminus\{j\}$. 
We have $\varphi_{A,t} = 0$, thus \[\displaystyle\sum ^{|A|-i_A(t)+1}_{k=1}\paren[1]{\mathbf{H}\paren[a]{\lambda,t}}_{[k:A]} > i_A(t)\alpha,\] therefore,  \[\displaystyle\sum ^{|\tilde{A}|-i_{\tilde{A}}(t)+1}_{k=1}\paren[1]{\mathbf{H}\paren[a]{\lambda,t}}_{[k:\tilde{A}]} > i_{\tilde{A}}(t)\alpha,\] because $i_{\tilde{A}}(t) = i_{A}(t) - 1$ and all the two sums are on the same subset of $A$ except possibly for $j$. In this case, the distance between the last term of the second sum and $\mathbf{H}_j\paren[a]{\lambda,t}$ is at most $\alpha$ by definition of $\lambda$. Thus $\varphi_{S,n,t}$ is achieve on a set $A$ such that $ A \cap R(t)\cap S^c = \varnothing$. \\
Note that by similar arguments considering $\tilde{A} = A\cup \set {j_2}\setminus\set{j_1}$ with $j_1 \in A\cap S\cap R(t)$ and $j_2\in A^c \cap S \cap R(t)^c$, 
$\varphi_{S,n,t}$ is achieve on a set $A$ such that 
$R(t)^c\cap S\subseteq A$. \\
To summarize, $\varphi_{S,n,t}$ is attained on a set $A$ such that $R(t)^c\subseteq A$ and $ A \cap R(t)\cap S^c = \varnothing$. \\
Thus, defining $k_S(t) = \paren{ n - \abs{R(t)^c \cap S}}\vee 0$ and $S(t) = R(t)\cap S$, \[\varphi_{S,n,t} = \indev{\displaystyle\sum ^{m - i(t)   +1}_{k=1}\paren[2]{\mathbf{H}_{R(t)^c}\paren[a]{\lambda,t},\paren[1]{\mathbf{H}\paren[a]{\lambda,t}}_{[1:S(t)]}, \dots,\paren[1]{\mathbf{H}\paren[a]{\lambda,t}}_{\brac{k_S(t):S(t) }} }_{\brac{k}} \leq k_S(t)\alpha}.\]

\subsection{Proof of \autoref{SC1betterJER}}\label{proof_SC1betterJER}

The conditions on $\alpha$ and $\varepsilon$ are merely assumptions implying positive $a_1<1$ and $a_2<1$. \\
We first show that $\hat{m}_0 = \max\set{n \in \llbracket 0,4 \rrbracket : \forall i \in \llbracket 1,4\rrbracket,  \sum\limits_{1\leq j\leq n}\paren[1]{\mathbb{G}(p_i)}_{(j)}\leq\tilde{\lambda}_\alpha\sqrt{n}}  = 3$.

\begin{align}
3 - \sum\limits_{1\leq j\leq 4}\mathbf{F}_j(p_3) &= 3 - a_1- 2a_2 = 3 - \paren{\frac{2 - \tilde{\lambda}_\alpha\sqrt{3}}{2}-\varepsilon} - \paren{2 - \tilde{\lambda}_\alpha\sqrt{3}} \nonumber\\
&= \tilde{\lambda}_\alpha \paren{\frac{3\sqrt{3}}{2}- \sqrt{4}} + \sqrt{4}\tilde{\lambda}_\alpha + \varepsilon \nonumber\\
&> \sqrt{4} \tilde{\lambda}_\alpha.\label{com_hatm0}
\end{align}
The inequality $\sum\limits_{1\leq j\leq 3}\paren[1]{\mathbb{G}(p_i)}_{(j)}\leq\tilde{\lambda}_\alpha\sqrt{3}$ is straightforward for $i \in \set{1,4}$, because the left-hand side of this inequality is non-positive.
For $i \in \set{2,3}$, $\sum\limits_{1\leq j\leq 3}\paren[1]{\mathbb{G}(p_i)}_{(j)} = 2 - 2a_2 = \tilde{\lambda}_\alpha\sqrt{3} $.
Thus, with $\zeta_{1} = 1$, $\zeta_{2} = 2 \wedge \left\lfloor a_1+2a_2 + \sqrt{3}\tilde{\lambda}_\alpha\right\rfloor = 2  $, $\zeta_3 \geq 2$ and $\zeta_4 = 3$ defined by \eqref{zetabretadaptive}, we get $\hat{V}^{\JER}_{\Rfam}\paren[1]{\set{1,2,4}} = 3$.\\
We now show that $\hat{V}^{\SCo}_{\varphi}\paren[1]{\set{1,2,4}} \leq 2$. Recall that, from \autoref{BretSC1}, \[\scalebox{0.87}{$\hat{V}^{\SCo}_\varphi\paren{\set{1,2,4}} = \max\set{n \in \llbracket 0, 3 \rrbracket : \forall i \in \llbracket 1, 4 \rrbracket,\exists j\leq 1, \sum\limits_{1\leq k\leq n}\paren[1]{\mathbb{G}(p_i)}_{(k:\set{1,2,4})}+ \sum\limits_{1\leq k\leq j}\mathbb{G}_3(p_i)\leq\tilde{\lambda}_\alpha\sqrt{n+j}}.$}\]
Thus, if we test the condition for $n = 3$ in $p_3$,
\begin{align*}
\sum\limits_{k \in \set{1,2,4}}\mathbb{G}_k(p_3) &= 2 - F_1\paren{p_3} - F_2\paren{p_3} - F_4\paren{p_3} \\
&= 2 - a_1 - a_2\\
&= \sqrt{3}\tilde{\lambda}_\alpha + \varepsilon > \sqrt{3}\tilde{\lambda}_\alpha,
\end{align*}
and, 
\begin{align*}
\sum\limits_{k \in\set{1,2,4}}\mathbb{G}_k(p_3) + \mathbb{G}_3(p_3) &= 3 - \sum\limits_{1\leq j\leq 4}\mathbf{F}_j(p_3) \\
&> \sqrt{4} \tilde{\lambda}_\alpha,
\end{align*}
from \eqref{com_hatm0}. Therefore, $\hat{V}^{\SCo}_\varphi\paren[1]{\set{1,2,4}} \leq 2$.

\subsection{Proof of \autoref{jerbetterhom}}\label{proof_jerbetterhom}

The expression of  $\varphi_A^{hom}$ as a local test of level $\alpha$ arises from upper bounding $\sum\limits_{ i \in A} \mathbf{F}_i\paren{t}$ by $\sum\limits_{1 \leq i \leq |A|}\paren[1]{\mathbf{F}\paren{t}}_{[i]}$.\\
Furthermore, $\varphi_A^{hom}$ has the expression \eqref{eq:topklocaltests} and verifies that each individual $f_n$ is nondecreasing for all $n\in \Nm$ and $\paren{f_n}_{n\in \Nm}$ is nondecreasing as a sequence of functions, thus, by \autoref{SCfuncversion},
\begin{align*}
\hat{V}^{\IP}_{\varphi^{hom}}(S) &= \min\limits_{0\leq k\leq |S|}\paren[2]{\floor{f_{\hat{m}_0^{hom}}\paren{p_{(k : S)}}} + \abs{S\setminus R\paren{p_{(k:S)}}}} 
\end{align*}
where  \begin{equation*}
\hat{m}_0^{hom} = \max \set[2]{s\in \llbracket0,m\rrbracket : \forall i \in \llbracket 1, s\rrbracket, f_{s}\paren{p_{\brac{i}}}\geq s-i+1}.
\end{equation*}
However, if $\mzsc$ is defined by \eqref{bretm0}, we have $\mzsc \leq \hat{m}_0^{hom}$ because $\varphi_{\Nm, n,t}^{hom} \leq \varphi_{\Nm, n,t} $. Therefore, for all $S\subseteq \Nm$, $\hat{V}^{\JER}_\Rfam (S)\leq \hat{V}^{\IP}_{\varphi^{hom}}(S)$ because \[\hat{V}^{\JER}_\Rfam (S) = \min\limits_{0\leq k\leq |S|}\paren[2]{\floor{f_{\mzsc}\paren{p_{(k : S)}}} + \abs{S\setminus R\paren{p_{(k:S)}}}}\wedge \mzsc .\]

\subsection{Proof of \autoref{jerbettersimes} and \autoref{remhomsimes}}\label{proof_jerbettersimes}

Note that $\hat{m}_0^{hom}$ is more conservative than the over estimator $\mzsc$ defined by \eqref{DDRm0}, because $\hat{m}_0^{hom} = \hat{V}_{\varphi^{hom}}^{\SCo}\paren[1]{\Nm}$ and for all $A \subseteq \Nm$ and $t \in [0,1]$, $\varphi_{A,t} \geq \varphi^{hom}_{A,t}$ hence $\varphi_{\Nm,n,t} \geq \varphi^{hom}_{\Nm,n,t}$ for all $n\in \NM$. Therefore, $\hat{V}^{\JER}_\Rfam \leq \hat{V}^{\SCo}_{\varphi^{hom}}$.

We then compute the expression of $\hat{m}_0^{hom}$. Note that $\ell_{i+1:n+1} =  \mathrm{max}\set[1]{t\in \cA, t\leq \lambda : \paren[1]{\mathbf{H}(\lambda,t)}_{[1]} + \dots + \paren[1]{\mathbf{H}(\lambda,t)}_{[n -i+1] }\leq \paren{i+1}\alpha}\geq \ell_{i:n}$, thus $\hat{m}_0^{hom} = \hat{V}^{\SCt}_{\varphi^{hom}} \paren[1]{\Nm}$ by \autoref{scgoemanproof2}. \\
Let $t\in (0,1]$. Since $\varphi^{hom}_{\Nm,n,t} = \indev{t\leq \ell_{\paren{n + i(t) - m}:n}}$ for all $n\in \Nm$ by \autoref{Simeslikemin}, then $b(t) = \max\set{n \in \NM :\varphi^{hom}_{\Nm,n,t} =0} = \max\set{n \in \NM : t > \ell_{\paren{n + i(t) - m}:n} }$. Thus, by definition of $\paren{\ell_{i:n}}$, $b(t)= \max\set{n \in \NM :\sum\limits^{m-i(t)+1}_{k=1}\paren[1]{\mathbf{H}\paren[0]{\lambda,t}}_{[k]} > \paren{ n + i(t) - m}\alpha }$, and the expression of $\hat{m}_0^{hom}$ follows. \\

\end{document}